\newtheorem{IntroTheorem}{Theorem}
\newtheorem{IntroCor}[IntroTheorem]{Corollary}
\numberwithin{equation}{section}
\newtheorem{theorem}[equation]{Theorem}
\newtheorem{lemma}[equation]{Lemma}
\newtheorem{proposition}[equation]{Proposition}
\newtheorem{cor}[equation]{Corollary}
\theoremstyle{definition}
\newtheorem{definition}[equation]{Definition}
\newtheorem{example}[equation]{Example}
\newtheorem{remark}[equation]{Remark}
\newcounter{FNC}[page]
\def\fauxfootnote#1{{\addtocounter{FNC}{2}$^\fnsymbol{FNC}$%
     \let\thefootnote\relax\footnotetext{$^\fnsymbol{FNC}$#1}}}
\newcommand{\rat}{\ {\relbar\rightarrow}\ }
\newcommand{\longrat}{{\;\relbar\relbar\rightarrow\;}}
\newcommand{\ba}{{\bf a}}
\newcommand{\bb}{{\bf b}}
\newcommand{\bv}{{\bf v}}
\newcommand{\calA}{{\mathcal A}}
\newcommand{\C}{{\mathbb C}}
\renewcommand{\P}{{\mathbb P}}
\newcommand{\Q}{{\mathbb Q}}
\newcommand{\R}{{\mathbb R}}
\newcommand{\Z}{{\mathbb Z}}
\newcommand{\QED}{\hfill\raisebox{-1pt}{\includegraphics{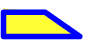}}\vspace{8pt}}
\newcommand{\eqed}{\hfill\raisebox{-1pt}{\includegraphics{figures/QED.eps}}}
\begin{document}

\title{Linear precision for toric surface patches}

\author[H-C. Graf v. Bothmer]{Hans-Christian Graf von Bothmer}
\address{Mathematisches Institut, Georg-August-Universiti\"at G\"ttingen, 
   Bunsenstr. 3-5, D-37073 G\"ottingen Germany}
\email{bothmer@uni-math.gwdg.de}
\urladdr{http://www.crcg.de/wiki/index.php5?title=User:Bothmer}

\author[K.~Ranestad]{Kristian Ranestad}
\address{Matematisk institutt\\
         Universitetet i Oslo\\
         PO Box 1053, Blindern\\
         NO-0316 Oslo\\
         Norway}
\email{ranestad@math.uio.no}
\urladdr{http://www.math.uio.no/\~{}ranestad}
\author[F.~Sottile]{Frank Sottile}
\address{Department of Mathematics\\
         Texas A\&M University\\
         College Station\\
         TX \ 77843-3368\\
         USA}
\email{sottile@math.tamu.edu}
\urladdr{http://www.math.tamu.edu/\~{}sottile}
\thanks{Work of Sottile supported by NSF grants CAREER DMS-0538734 and DMS-0701050,  
        the Institute for Mathematics and its Applications, and Texas Advanced Research
   Program under Grant No. 010366-0054-2007} 
\subjclass[2000]{14M25, 65D17}
\keywords{B\'ezier patches, geometric modeling, linear precision, Cremona transformation, toric
patch}

\begin{abstract}
 We classify the homogeneous polynomials in three variables whose toric polar linear
 system defines a Cremona transformation.
 This classification includes, as a proper subset, the classification of toric
 surface patches from 
 geometric modeling which have linear precision.
 Besides the well-known tensor product patches and B\'ezier triangles, we identify a
 family of toric patches with trapezoidal shape, each of which has linear precision.
 Furthermore, B\'ezier triangles and tensor product patches are special cases of
 trapezoidal patches.
 
\end{abstract}
\maketitle

\begin{center}
Communicated by Wolfgang Dahmen and Herbert Edelsbrunner
\end{center}

%
\section*{Introduction} 
%

While the basic units in the geometric modeling of surfaces are B\'ezier triangles and
rectangular tensor product patches, some applications call for multi-sided $C^\infty$
patches (see~\cite{KK00} for a discussion).
Krasauskas's toric B\'ezier patches~\cite{KR02} are a flexible and mathematically
appealing system of such patches.
These are based on real toric varieties from algebraic geometry, may have shape any
polytope $\Delta$ with integer vertices, and they include the classical B\'ezier patches as special
cases. 
For descriptions of multisided patches and toric patches, see~\cite{Gold}.

More precisely, given a set of lattice points in $\Z^n$ with convex hull $\Delta$,
Krasauskas defined toric B\'ezier functions, which are polynomial
blending functions associated to each lattice point. 
This collection of lattice points and toric B\'ezier functions, together with a positive
weight associated to each lattice point is a \Blue{{\sl toric patch}}.
Choosing also a control point in $\R^d$ for each lattice point leads to a map
$\Phi \colon \Delta \to \R^d$ whose image may be used in modeling.
If we choose the lattice points themselves as control points we obtain the 
\Blue{{\sl tautological map}} $\tau \colon \Delta \to \Delta$, which  
is a bijection. 
If the tautological map has a rational inverse, 
then the toric patch has \Blue{{\sl linear precision}}.

The lattice points and weights of a toric patch are encoded in
a homogeneous multivariate polynomial $F(x_0,\dotsc,x_n)$ with positive coefficients,
with every such polynomial corresponding to a toric patch.
In~\cite{GS} it was shown that the toric patch given by $F$ has linear precision
if and only if the associated toric polar linear system,
 \[
    T(F)\ =\  \Bigl\langle\; x_0\frac{\partial F}{\partial x_0},\,
           x_1\frac{\partial F}{\partial x_1},\,
           \dotsc,\,
           x_n\frac{\partial F}{\partial x_n}\;\Bigr\rangle\,,
 \]
defines a birational map $\Blue{\Phi_F}\colon\P^n \rat \P^n$.
This follows from the existence of a rational reparameterization transforming 
the tautological map into $\Phi_F$.
The polar linear system is toric because the derivations $x_i\frac{\partial}{\partial x_i}$
are vector fields on the torus $(\C^\times)^n\subset\P^n$.

When $T(F)$ defines a birational map, we say that $F$ defines a 
\Blue{{\sl toric polar Cremona transformation}}.
We seek to classify all such homogeneous polynomials $F$ without the
restriction that the coefficients are positive or even real.
This is a variant of the classification of homogeneous polynomials $F$ whose polar linear
system 
(which is generated by the partial derivatives $\frac{\partial F}{\partial x_i}$) 
defines a  birational map.
Dolgachev~\cite{Do00} classified all such square free polynomials in 3 variables
and those in 4 variables that are products of linear forms.\medskip

\noindent{\bf Definition.}
 Two polynomials $F$ and $G$ are called \Blue{{\sl equivalent}} if they can be transformed
 into each other by successive invertible monomial substitutions, multiplications with
 Laurent monomials, or scalings of the variables. \medskip

The property of defining a 
toric polar Cremona transformation is preserved under this equivalence.
Our main result is the classification (up to equivalence) of
homogeneous polynomials in three variables that 
define toric polar Cremona transformations.

\begin{IntroTheorem}\label{T:Classification}
 A homogeneous polynomial $F$ in three variables that defines a toric polar
 Cremona transformation is equivalent to one of the following
 \begin{enumerate}
  \item  $(x+z)^a(y+z)^b$ for $a,b\geq 1$,
  \item  $(x+z)^a\bigl( (x+z)^d + yz^{d-1}\bigr)^b$ for $a\geq 0$ and $b,d\geq 1$, or 
  \item  $\bigl(x^2+y^2+z^2-2(xy+xz+yz)\bigr)^d$, for $d\geq 1$.
 \end{enumerate}
\end{IntroTheorem}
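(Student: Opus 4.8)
The plan is to (i) dispose of the easy direction by exhibiting inverses, (ii) pass to an affine ``logarithmic'' form of the map and use the factorization of $F$, and (iii) translate the birationality hypothesis into the numerical conditions for a homaloidal net and run the resulting case analysis. For (i): one checks directly that each of (1)--(3) defines a toric polar Cremona transformation. For (1) the toric polar map is, after clearing the fixed factor, $[a\,x(y+z):b\,y(x+z):z(a(y+z)+b(x+z))]$, a quadratic transformation whose inverse has the same shape; for (3) it is the standard quadratic transformation with base points at the three edge--midpoints of the coordinate triangle; and for (2) it is a de Jonqui\`eres transformation whose inverse is obtained by solving first for $y$ and then for $x$. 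Since invertible monomial substitutions, multiplication by Laurent monomials, and scalings of variables each carry a toric polar Cremona transformation to another one, it remains to prove the ``only if'' direction, and we are free to normalize $F$.

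Dehomogenize by setting $f(s,t)=F(s,t,1)$. Using Euler's relation $xF_x+yF_y+zF_z=nF$ to trade $zF_z$ for $nF$, the restriction of $\Phi_F$ to the torus becomes, up to the induced linear automorphism of the target $\P^2$, the affine toric polar map
\[
   \psi\colon (s,t)\ \longmapsto\ \bigl(s\,\partial_s\log f,\ t\,\partial_t\log f\bigr),
\]
so $F$ defines a toric polar Cremona transformation if and only if $\psi$ is birational. Dividing $F$ by its largest monomial factor changes $\psi$ only by a further target automorphism, since $x_i(x^ay^bz^cF)_{x_i}$ is a linear combination of $F$ and the $x_jF_{x_j}$; so we may assume $x,y,z\nmid F$. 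Writing $f=\lambda\prod_{i=1}^r g_i^{m_i}$ with the $g_i$ distinct, irreducible, and distinct from $s,t$, we get $\psi=\bigl(\sum_i m_i\,s\,\partial_s\log g_i,\ \sum_i m_i\,t\,\partial_t\log g_i\bigr)$, so $\psi$ depends only on the \emph{weighted arrangement}: the lines $\{s=0\},\{t=0\}$, the curves $\{g_i=0\}$, and the weights $m_i$. Monomial substitutions act here as $GL_2(\Z)$ on the torus, so the goal is to classify such weighted arrangements up to $GL_2(\Z)$, torus translation, and scaling.

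Rehomogenize and study $T(F)$ as a linear system defining a plane Cremona transformation. Its three generators share the fixed component $\prod_i G_i^{m_i-1}$ (each $x_jF_{x_j}/F$ has denominator dividing $\prod_i G_i$), and after removing the fixed part $\Phi_F$ is given by a net $\mathcal H$ of plane curves of some degree $N$ with no fixed component. If $\Phi_F$ is birational, $\mathcal H$ is homaloidal, so its multiplicities $e_p$ at the base points satisfy $N^2-\sum_p e_p^2=1$, $\sum_p e_p=3(N-1)$, and $(N-1)(N-2)=\sum_p e_p(e_p-1)$; and the base points of $\mathcal H$ lie among the singular points of the weighted arrangement together with the points forced onto the coordinate triangle by the factors $x,y,z$. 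The crux is to show that these constraints force $\Phi_F$ to be a de Jonqui\`eres transformation (one base point of multiplicity $N-1$, the rest simple) in very special position relative to $\{xyz=0\}$, or else the exceptional quadratic transformation of family (3). Feeding the combinatorics back in then pins the arrangement down to: two lines each through a vertex of the coordinate triangle; a single rational curve of degree $d$ with a point of multiplicity $d-1$ at a vertex and contact of order $d$ with a coordinate line, optionally together with one more line through a vertex; or the unique conic inscribed in the coordinate triangle. Undoing the normalizations yields (1), (2) (which degenerates to (1) and to the B\'ezier triangle $(x+y+z)^n$ when $d=1$), and (3).

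The main obstacle I expect is precisely this structural step: ruling out \emph{every} other weighted arrangement --- in particular every higher-degree Cremona transformation that is not de Jonqui\`eres, and every placement of a singular rational curve not meeting the coordinate triangle in the prescribed way. The subtlety is that the weights $m_i$ rescale the contributions to $\psi$ without altering the arrangement, so the homaloidal bookkeeping must be carried out simultaneously over all weight vectors and all contact patterns with $\{xyz=0\}$. This is also where working in exactly three variables --- so that $\mathcal H$ is a \emph{net} of plane curves and the classical Cremona/Noether--Castelnuovo machinery applies, paralleling Dolgachev's classification of ordinary polar Cremona transformations --- is essential.
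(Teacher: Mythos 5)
Your easy direction and your normalizations are fine, and passing to the logarithmic form $\psi(s,t)=(s\,\partial_s\log f,\;t\,\partial_t\log f)$ is a correct and natural move (it makes clear that $\psi$ depends only on the weighted arrangement, and that powers of $F$ give the same map, matching the paper's Lemma on powers). But the proposal has a genuine gap at precisely the step you flag: you assert that the Noether/homaloidal relations force $\Phi_F$ to be a de Jonqui\`eres transformation in special position relative to $\{xyz=0\}$, or else the quadratic of family~(3), but you give no argument for this, and it is where essentially all the real work lies. The homaloidal equations $N^2-\sum e_p^2=1$, $\sum e_p=3(N-1)$ do not by themselves constrain the location of the base points relative to the coordinate triangle, nor do they exclude higher-degree non--de Jonqui\`eres nets, and you do not supply the additional mechanism that ties the base-point cluster to the fixed toric boundary.

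The paper takes a different decomposition that avoids this impasse. Rather than reasoning globally about the homaloidal numerics, it splits into a dichotomy between \emph{contracted} and \emph{non-contracted} irreducible factors of $F$ under $\varphi_F$ and attacks each class separately. The decisive input is a singularity restriction (their Theorem~\ref{Th:singularities}): $F=0$ has at most one singular point off the coordinate lines, and if there is one then every factor of $F$ is contracted, with at most two branches through an ordinary such point. This is proved by a local resolution-of-base-points argument (Lemmas~\ref{L:germ_mult} and~\ref{L:inequality}), showing that a leaf of the exceptional tree over such a singular point must carry a non-contracted exceptional curve of multiplicity one, forcing all strict transforms to be contracted. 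With that in hand, contracted factors are shown to be binomials $x^a+\alpha y^bz^{a-b}$ (Lemma~\ref{L:contr}), any two of which can be simultaneously normalized to $(x+z),(y+z)$; while the non-contracted factor (there is at most one) is classified by parameterizing the rational curve $F=0$ by $\gamma=[f:g:h]$, showing via the syzygy matrix that $fgh$ has exactly three distinct linear factors, bounding the arithmetic genus via $\delta$-invariants of binomial germs (Lemma~\ref{L:binomial_germ}), and reducing the seven resulting parameterization types to type~I by explicit quadratic Cremona transformations. So your route (homaloidal bookkeeping $\Rightarrow$ de Jonqui\`eres) and theirs (contracted/non-contracted dichotomy $+$ singularity restriction $+$ parameterization and genus count) are genuinely different, and what is missing from yours is the replacement for the paper's singularity theorem and classification of factors. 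As it stands the proposal is a plan, not a proof, of the hard direction.
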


When $a=0$ and $d=1$ in (2), we obtain the polynomial
$(x+y+z)^b$, which corresponds to a B\'ezier triangular patch of degree $b$ used in
geometric modeling.
Similarly, the polynomials $F$ in (1) correspond to tensor product patches, which are also  
common in geometric modeling.
These are also recovered from the polynomials in (2) when $d=0$, after multiplying by $z^b$.
Less-known in geometric modeling are \Blue{{\sl trapezoidal patches}}, which 
correspond to the polynomials of (2) for general parameters $a,b,d$.
Their blending functions and weights are given in Example~\ref{Ex:trapezoid}.

\begin{IntroCor}\label{T:Toric_patches}
  The only toric surface patches possessing linear precision are
  tensor product patches, B\'ezier triangles, and the trapezoidal patches of 
  Example~$\ref{Ex:trapezoid}$.
\end{IntroCor}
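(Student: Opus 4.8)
The plan is to combine the reparametrization result of \cite{GS} recalled above with Theorem~\ref{T:Classification}. A toric surface patch is exactly the data of a homogeneous polynomial $F$ in three variables with \emph{positive} coefficients, and by \cite{GS} the patch has linear precision if and only if $F$ defines a toric polar Cremona transformation; so by Theorem~\ref{T:Classification} its $F$ is equivalent to a polynomial of type (1), (2), or (3). Two things then remain: first, to show that no polynomial with positive coefficients is equivalent to one of type (3); and second, to identify the patches encoded by the positive polynomials equivalent to types (1) and (2) as the tensor product patches and the trapezoidal patches of Example~\ref{Ex:trapezoid} (the latter including the B\'ezier triangles, which are the case $a=0$, $d=1$, and degenerating to tensor product patches when $d=0$ after clearing a power of $z$).

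For the first point I would work with the signs of the coefficients. A monomial substitution together with a multiplication by a Laurent monomial induces an affine-unimodular bijection of the support of $F$, hence carries the Newton polytope of $F$ onto that of its image and edges to edges, and it leaves all coefficients unchanged; a scaling $x_i\mapsto\lambda_i x_i$ multiplies the coefficient of $x^\alpha$ by $\lambda^\alpha$, and since $F$ and a type-(3) polynomial are real with full-dimensional Newton polytope the $\lambda_i$ may be taken real. Hence, if a polynomial $F$ with positive coefficients were equivalent to $G:=\bigl(x^2+y^2+z^2-2(xy+xz+yz)\bigr)^d$, then, after the affine identification of supports, the map $\gamma\mapsto\operatorname{sign}(\text{coefficient of }x^\gamma\text{ in }G)$ would have the form $\gamma\mapsto\pm(-1)^{\langle u,\gamma\rangle}$ for a fixed $u\in(\Z/2)^3$. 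But restricting $G$ to each coordinate hyperplane gives a power of a square of a difference of two variables, e.g.\ $G|_{z=0}=\bigl((x-y)^2\bigr)^d=(x-y)^{2d}$, so along each of the three edges of the Newton triangle of $G$ the coefficients have strictly \emph{alternating} signs, with sign $+$ at the two endpoints. A short parity computation shows that an affine $\Z/2$-valued function on the triangle cannot do this along all three edges simultaneously: it would force $u_1+u_2$, $u_1+u_3$, and $u_2+u_3$ all to be odd, whose sum is at once even and odd. Thus type (3) never arises.

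For the second point, expanding $(x+z)^a(y+z)^b$ exhibits its Newton polytope as the rectangle $[0,a]\times[0,b]$ carrying the usual product of Bernstein weights, i.e.\ the tensor product patch of bidegree $(a,b)$; expanding $(x+z)^a\bigl((x+z)^d+yz^{d-1}\bigr)^b$ exhibits its Newton polytope as the trapezoid with vertices $(0,0)$, $(a+bd,0)$, $(a,b)$, $(0,b)$ and its weights as exactly those of Example~\ref{Ex:trapezoid}. Since the equivalence moves that visibly preserve positivity of the coefficient vector are relabelling the support by an element of $GL_2(\Z)$, translating it, and rescaling the weights positively, these normal forms already account for all the positive polynomials in their equivalence classes, and the corollary follows. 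I expect the first point — the exclusion of type (3) — to be the only real obstacle, and the sign/parity argument above is how I would dispose of it.
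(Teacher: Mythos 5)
Your overall route is exactly the paper's: combine Proposition~\ref{P:TPLS} (linear precision $\Leftrightarrow$ toric polar Cremona) with Theorem~\ref{T:Classification}, then observe that type~(3) cannot arise from positive coefficients and that types~(1) and~(2) recover the tensor product and trapezoidal (hence also B\'ezier triangle) patches. Where you add genuine value is the exclusion of type~(3): the paper simply \emph{asserts} that $\bigl(x^2+y^2+z^2-2(xy+xz+yz)\bigr)^d$ is not equivalent to a positive polynomial, whereas you supply a proof. Your sign/parity argument is correct: monomial substitutions and monomial multiplications permute exponents without touching coefficients; a scaling changes signs by a $\Z/2$-character of the difference lattice $\{v\in\Z^3:|v|=0\}$, so the sign pattern of the target must be of the form $\pm(-1)^{\langle u,\gamma\rangle}$; and restricting $G$ to the three coordinate lines gives $(x-y)^{2d}$, etc., whose strictly alternating signs force $u_1+u_2\equiv u_1+u_3\equiv u_2+u_3\equiv 1 \pmod 2$, which is impossible. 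One small imprecision is the phrase ``the $\lambda_i$ may be taken real''; what you actually use, and what is true (since the differences of exponents of $G$ generate the full degree-zero sublattice), is that the induced sign function on that sublattice is a group homomorphism to $\{\pm1\}$, which indeed extends to one of the form $\gamma\mapsto(-1)^{\langle u,\gamma\rangle}$. With that caveat, the proposal is correct and in fact more complete than the paper's own treatment of this corollary.
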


The polynomials of Theorem~\ref{T:Classification}(3) cannot arise in geometric
modeling, for they are not equivalent to a polynomial with positive coefficients.

We remark that the notion of linear precision used here and
in~\cite{GS} is more restrictive than typically used in geometric
modeling.
There, linear precision often means that there are control points in $\Delta$
so that the resulting map $\Delta\to\Delta$ is the identity. 
We include these control points in our definition of a patch to give a 
precise definition that enables the mathematical study of linear precision.
Nevertheless, this restrictive classification will form the basis for a more
thourough study of the general notion of linear precision for toric
patches.

In Section~\ref{S:LPPP}, we review definitions and results from~\cite{GS}
about linear precision for toric patches, including Proposition~\ref{P:TPLS} which
asserts that a toric patch has linear precision if and only if a
polynomial associated to the patch defines a toric polar Cremona
transformation, showing that Corollary~\ref{T:Toric_patches} 
follows from Theorem~\ref{T:Classification}.
We also show directly that polynomials associated to B\'ezier
triangles, tensor product patches, and trapezoidal patches define
toric polar Cremona transformations.
In particular, this implies that trapezoidal patches have linear precision.
In Section~\ref{S:Basics}, we prove that the above equivalence preserves the
property of defining a toric polar Cremona transformation. 
Then we give our proof of Theorem~\ref{T:Classification}.
Three important ingredients of this proof are established in the
remaining sections.
In Section~\ref{S:contracted}, we show that if all factors of $F$ are contracted, 
then $F$ has two such contracted factors and we identify them.
In Section~\ref{S:not_contracted}, we classify the non contracted factors of $F$,
and we conclude in  Section~\ref{S:technical} with an analysis of possible singularities
of the curve defined by $F$.

Most of our proofs use elementary notions from algebraic
geometry as developed in~\cite{CLO}.
The only exceptions are in Section~\ref{Sec:genus}, where we blow up
a binomial curve to compute its arithmetic genus, and
Section~~\ref{S:technical}, which uses the resolution of base points of
a linear series.\medskip

\noindent {\bf Notation.}  
We shall use the term \Blue{{\sl linear system}} on $\P^2$ both for a vector space 
of forms and for the projective space of curves that they define.  
More generally a linear system on a surface defines a rational map to a
projective space.  
A common factor in the linear system can be removed without changing this map, so we shall
say that two linear systems are equivalent if they define the same rational map.  
For example,
\[
  \langle F,G,H \rangle\ \equiv\ \langle xF, xG,xH \rangle\,.
\]

%
\section{Linear precision and toric surface patches}\label{S:LPPP}
%

We review some definitions and results of~\cite{GS}.
See~\cite{Cox03,KR02,So03} for more on toric varieties and their relation to geometric
modeling.

Let $\Delta\subset\R^n$ be a lattice polytope (the vertices of $\Delta$ lie
in the integer lattice $\Z^n$).
This may be defined by its facet inequalities
\[
  \Delta\ =\ \{s\in\R^n \,:\, h_i(s)\geq 0\,, i=1,\dotsc,N\}\,.
\]
 Here, $\Delta$ has $N$ facets and for each $i=1,\dotsc,N$, 
 $\Blue{h_i}(s):=\langle \bv_i,s\rangle + c_i$ is the linear function defining
 the $i$th facet, where $\bv_i\in\Z^n$ is the (inward oriented) primitive vector normal to 
 the facet and $c_i\in\Z$. 

 Let $\calA\subset\Delta\cap\Z^n$ be any subset of the integer points of
 $\Delta$ which includes its vertices.
 Let $w=\{w_\ba: \ba\in\calA\}\subset \R_>$ be a collection of
 positive weights.
 For each $\ba\in\calA$, Krasauskas defined the \Blue{{\sl toric B\'ezier function}}
 \begin{equation}\label{Eq:toric-Bezier}
   \Blue{\beta_\ba}(s)\ :=\
    w_\ba\cdot h_1(s)^{h_1(\ba)}h_2(s)^{h_2(\ba)}\dotsb h_N(s)^{h_N(\ba)}\,.
 \end{equation}
Then $(\beta_\ba(s) :  \ba\in\calA)$ are the
blending functions for the 
\Blue{{\sl toric B\'ezier patch of shape $(\calA,w)$}}.

Given \Blue{{\sl control points}}
$b=\{\bb_\ba :  \ba\in\calA\}\subset\R^m$ we may define the map 
 \begin{equation}\label{Eq:TBPwCP}
  \begin{array}{rcl}
    \Phi\ \colon\ \Delta &\longrightarrow& \R^m\,,\\
        s&\longmapsto&    
   {\displaystyle \frac{\sum_{\ba\in\calA} \bb_\ba \cdot \beta_\ba(s)}%
        {\sum_{\ba\in\calA} \beta_\ba(s)}}\ .
  \end{array}
 \end{equation}
Precomposing the function $\beta_\ba(s)$ with a homeomorphism $\psi\colon\Delta\to\Delta$
gives a new function $\beta_\ba(\psi(s))$.
Using these new functions in place of the original functions $\beta_\ba$
in~\eqref{Eq:TBPwCP} does not change the shape $\Phi(\Delta)$ but will alter the
parameterization of the patch. 

The toric B\'ezier patch of shape $(\calA,w)$ 
has \Blue{{\sl linear precision}} if the tautological map
 \begin{eqnarray}
    \tau\ \colon\ \Delta &\longrightarrow& \Delta\nonumber\\
        s&\longmapsto&   
   \frac{\sum_{\ba\in\calA} \ba \cdot \beta_\ba(s)}%
        {\sum_{\ba\in\calA} \beta_\ba(s)}\nonumber
 \end{eqnarray}
is the identity.
While this may not occur for the given blending functions, Theorem~1.11 in~\cite{GS}
asserts that there is a unique reparameterization by a homeomorphism
$\psi\colon\Delta\to\Delta$ so that $(\beta_\ba(\psi(s)) :  \ba\in\calA)$ has linear
precision. 
Unfortunately, these new functions $\beta_\ba(\psi(s))$ may not be easy
to compute.
The toric patch of shape $(\calA,w)$ has 
\Blue{{\sl rational linear precision}} if these new functions 
$\beta_\ba(\psi(s))$ are rational functions or polynomials.
This property has an appealing mathematical reformulation.

Given data $(\calA,w)$ as above, suppose that 
$d:=\max\{\Blue{|\ba|}:=a_1+\dotsb+a_n :  \ba\in\calA\}$ is
the maximum degree of a monomial $x^\ba$ for $\ba\in\calA$.
Define the homogeneous polynomial
\[
    \Blue{F_{\calA,w}}(x_0,x_1,\dotsc,x_n)\ :=\ 
    \sum_{\ba\in\calA} w_\ba\; x_0^{d-|\ba|} x^\ba\,.
\]
The \Blue{{\sl toric polar linear system}} of $F_{\calA,w}$
is the linear system generated by its toric derivatives,
 \begin{equation}\label{Eq:TPLS}
    \Blue{T(F_{\calA,w})}\ :=\ 
      \Bigl\langle\; x_0\frac{\partial F_{\calA,w}}{\partial x_0},\,
           x_1\frac{\partial F_{\calA,w}}{\partial x_1},\,
           \dotsc,\,
           x_n\frac{\partial F_{\calA,w}}{\partial x_n}\;\Bigr\rangle\,.
 \end{equation}
%

\begin{proposition}[Corollary 3.13 of~\cite{GS}]\label{P:TPLS}
  The toric patch of shape $(\calA,w)$ has rational linear precision if and only if
  its toric polar linear system~$\eqref{Eq:TPLS}$ defines a birational isomorphism
  $\P^n \rat \P^n$.
\end{proposition}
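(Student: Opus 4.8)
The plan is to factor the rational map $\Phi_{F_{\calA,w}}\colon\P^n\rat\P^n$ defined by the toric polar linear system through the projective toric variety $X_\calA\subset\P^m$ (with $m:=|\calA|-1$) attached to $\calA$ and the weights $w$, and to identify the linear projection of $X_\calA$ arising this way with the map underlying the tautological map $\tau$. The equivalence of the two conditions will then drop out by comparing degrees of rational maps.

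First I would recall the topological picture behind $\tau$. Let $\beta\colon\Delta\to\P^m$ be $s\mapsto[\beta_\ba(s):\ba\in\calA]$. Factoring the products $h_j(s)^{h_j(\ba)}$ shows that $\beta$ lands in the nonnegative part $X_{\calA,\geq 0}$ of $X_\calA$, and by Birch's theorem, together with the standard properties of the moment map of a projective toric variety (see \cite{KR02,So03}), both $\beta\colon\Delta\to X_{\calA,\geq 0}$ and the projection $\mu\colon X_{\calA,\geq 0}\to\Delta$, $[y_\ba]\mapsto(\sum_\ba\ba\,y_\ba)/(\sum_\ba y_\ba)$, are homeomorphisms, with $\tau=\mu\circ\beta$. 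Hence the unique reparameterization $\psi$ of Theorem~1.11 of \cite{GS} achieving linear precision must be $\psi=\tau^{-1}$, so that the reparameterized blending functions $\beta_\ba\circ\psi$ form, projectively, the section $\mu^{-1}$ of $\mu$. Therefore the patch has rational linear precision precisely when $\mu^{-1}$ is given by rational functions, that is, precisely when the dominant rational map $\mu\colon X_\calA\rat\P^n$ has degree one.

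It remains to match $\Phi_F$, where $F:=F_{\calA,w}=\sum_\ba w_\ba\,x_0^{d-|\ba|}x^\ba$, with this projection. Since $x_i\frac{\partial}{\partial x_i}$ scales the monomial $x_0^{d-|\ba|}x^\ba$ by $a_i$ for $1\le i\le n$ and by $d-|\ba|$ for $i=0$, every generator of $T(F)$ is a fixed linear combination of the monomials $w_\ba\,x_0^{d-|\ba|}x^\ba$ that define the monomial parameterization $\pi\colon\P^n\rat X_\calA$. Reading those combinations as linear forms on $\P^m$, and using Euler's identity $\sum_{i=0}^n x_i\frac{\partial F}{\partial x_i}=dF$ to replace the coordinate coming from $x_0$, one sees that they cut out exactly the projection $\mu|_{X_\calA}$ up to a linear automorphism of $\P^n$. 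Thus $\Phi_F=\mu|_{X_\calA}\circ\pi$ up to an automorphism of the target, and, provided $\pi$ is birational onto $X_\calA$ (which holds for the patches considered in \cite{GS}), we get $\deg\Phi_F=\deg\mu$. Combined with the previous paragraph, $F$ defines a toric polar Cremona transformation if and only if the patch has rational linear precision.

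The step I expect to be the main obstacle is the second paragraph: proving that $\beta$ and $\mu$ are homeomorphisms with $\tau=\mu\circ\beta$, and, crucially, deducing that the \cite{GS} reparameterization is $\tau^{-1}$ with reparameterized blending functions $\mu^{-1}$. This is where Birch's theorem and the real structure of $X_\calA$ are indispensable, and it is what converts the analytic condition on blending functions into the purely algebraic condition $\deg\mu=1$.
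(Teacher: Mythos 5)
The paper does not prove this proposition; it cites it as Corollary~3.13 of~\cite{GS} and merely alludes to the mechanism (``the existence of a rational reparameterization transforming the tautological map into $\Phi_F$'') in the introduction. Your proposal is a correct reconstruction of that argument and follows essentially the route of~\cite{GS}: you factor $\tau=\mu\circ\beta$ with $\beta$ and $\mu$ homeomorphisms on the nonnegative part of $X_\calA$ (via Birch's theorem and the moment map), identify the reparameterized blending functions with $\mu^{-1}$, compute that $T(F_{\calA,w})$ is, after the Euler-relation substitution, exactly the linear system cutting out $\mu$ on $X_\calA$, and conclude that $\Phi_F=\mu\circ\pi$ is birational iff $\mu$ is. The one place you rightly flag as a side condition — that the monomial map $\pi\colon\P^n\rat X_\calA$ is birational onto its image — is indeed the standing hypothesis in~\cite{GS} (it amounts to $\calA$ generating the affine lattice $\Z^n$), and both directions of the equivalence need it, so your hedge is appropriate rather than a gap.
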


We illustrate Proposition~\ref{P:TPLS} through some examples of patches with
linear precision. 

\begin{example}[B\'ezier curves]\label{Ex:curves}
 Let $\calA:=\{0,1,\dotsc,d\}\subset\R$.
 If we choose weights $w_i:=\binom{d}{i}$, then the toric B\'ezier functions are
 \begin{equation}\label{Eq:Bez_Curve}
   \beta_i(s)\ :=\ \tbinom{d}{i} s^i (d-s)^{d-i}\,,
   \qquad i=0,1,\dotsc,d\,.
 \end{equation}
 The polynomial is
\[
   F_{\calA,w}\ =\ \sum_{i=0}^d \tbinom{d}{i} x^i y^{d-i}\ =\ (x+y)^d\,,
\]
 and its associated toric polar linear system is
\[
  T(F_{\calA,w})\ =\ \left\langle xd(x+y)^{d-1},\, yd(x+y)^{d-1}\right\rangle
  \ \equiv\ \langle x, y\rangle\,,
\]
which defines the identity map $\P^1\to\P^1$.
Thus the toric patch with shape $(\calA,w)$ has rational linear precision.

Substituting $s=d\cdot t$ and removing the common factor of $d^d$, the 
toric B\'ezier functions~\eqref{Eq:Bez_Curve} become the univariate Bernstein
polynomials, the blending functions for B\'ezier curves.
Up to a coordinate change, this is the only toric patch in dimension 1
which has rational linear precision~\cite[Example 3.15]{GS}.
More precisely, the toric polar linear system of a homogeneous polynomial $F(x,y)$ that is
prime to $xy$ defines a birational isomorphism $\P^1\to \P^1$ if and only if $F$ is the
pure power of a linear form that does not vanish at either coordinate point $[0:1]$ and
$[1:0]$.  
\eqed
\end{example}
We specialize to the case $n=2$ for the remainder of this paper.
Our homogeneous coordinates for $\P^2$ are $[x:y:z]$ and the toric polar linear system
defines the map
 \begin{equation}\label{Eq:TPCP}
   [\;x\ :\ y\ :\ z\;]\ \longmapsto\ 
   \Bigl[\; x\frac{\partial F_{\calA,w}}{\partial x}\ :\ 
            y\frac{\partial F_{\calA,w}}{\partial y}\ :\ 
            z\frac{\partial F_{\calA,w}}{\partial z}\;\Bigr]\,.
 \end{equation}

\begin{example}[Quadratic Cremona Transformation]\label{Ex:cremona}
 Before giving examples of toric surfaces patches with linear precision,
 we describe the classical quadratic Cremona transformation, a birational map
 on the projective plane.
 This is defined by
 \[
    \varphi\ \colon\   [\;x\ :\ y\ :\ z\;]\ \longmapsto\ 
   [\;yz\ :\ zx\ :\ xy\;]\,.
 \]
 At points where $xyz\neq0$, we have
 $\varphi([x:y:z])=[\frac{1}{x}:\frac{1}{y}:\frac{1}{z}]$,
 which shows that $\varphi$ is an involution.
 The map $\varphi$ is undefined at the three coordinate points 
 $[1:0:0]$, $[0:1:0]$, and $[0:0:1]$, which are its \Blue{{\sl basepoints}}.
 For $xy\neq 0$ we have $\varphi([x:y:0])=[0:0:xy]=[0:0:1]$, and so the map $\varphi$
 \Blue{{\sl contracts}} the coordinate line $z=0$ to the point $[0:0:1]$.
 The other coordinate lines are also contracted by $\varphi$.
 Furthermore, as $\varphi([1:ty:tz])=[t^2yz:tz:ty]=[tyz:z:y]$, we see that if $y,z$ are fixed but
 $t$ approaches zero, then $\varphi([1:ty:tz])$ approaches $[0:z:y]$.
 Thus the map $\varphi$ blows up the basepoint $[1:0:0]$ into the line $x=0$.

 We call this map, or any map obtained from it by linear substitution, a
 standard quadratic Cremona transformation.
 There is a second, non standard, quadratic Cremona given by
 $[x:y:z]\mapsto[x^2 :  yz : xz]$.
 We leave the computation of its contracted curves and the resolution of its basepoints as
 an exercise for the reader.
\end{example}

\begin{example}[Tensor product patches]\label{Ex:tensor}
Let $a,b$ be positive integers.
Let $\calA$ be the integer points in the $a\times b$ rectangle,
$\{(i,j) :  0\leq i\leq a,\ 0\leq j\leq b\}$, 
and select weights $w_{i,j}:=\binom{i}{a}\binom{j}{b}$.
Then the corresponding toric B\'ezier functions are
 \begin{equation}\label{Eq:TPP}
   \beta_{i,j}(s,t)\ :=\ \tbinom{i}{a}\tbinom{j}{b}
         s^i(a-s)^{a-i} t^j(b-t)^{b-j}\,,
 \end{equation}
and the homogeneous polynomial is
\[
   F_{\calA,w} = \sum_{i=0}^a\sum_{j=0}^b \tbinom{i}{a}\tbinom{j}{b}x^iy^jz^{a+b-i-j}
    \ =\ (x+z)^a(y+z)^b\,.
\]
Removing the common factor $(x+z)^{a-1}(y+z)^{b-1}$ from the partial
derivatives of $F_{\calA,w}$ shows that
 \begin{multline*}
  \qquad T(F_{\calA,w})\ \equiv\ \langle  ax(y+z),\ by(x+z),\ z(a(y+z) \,+\,b(x+z))\rangle\\
  =\  \langle  (x+z)(y+z),\ z(x+z),\  z(y+z)\rangle\,, \qquad
 \end{multline*}
%
%
which defines a quadratic Cremona transformation with base points
\[
   \bigl\{ [1:1:-1],\ [1:0:0],\  [0:1:0]\bigr\}\,.
\]
By Proposition~\ref{P:TPLS}, this patch~\eqref{Eq:TPP} has rational linear precision.
This is well-known, as after a change of coordinates, these blending functions define
the tensor product patch of bidegree $(a,b)$, which has rational linear precision.
\eqed
\end{example}

\begin{example}[B\'ezier triangles]
Let $\calA:=\{(i,j) :  0\leq i,j\mbox{ and } i+j\leq d\}$.
These are the integer points inside the triangle below.
 \[
   \begin{picture}(112,66)(-31,-7)
    \put(-1,-1){\includegraphics{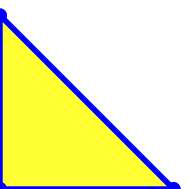}}
    \thicklines
    \put(-31,48){$(0,d)$}  
    \put( 54,-3){$(d,0)$}
    \put(-31,-3){$(0,0)$}   
   \end{picture}
 \]
If we select weights $\Blue{w_{i,j}}:=\frac{d!}{i!j!(d-i-j)!}$, the toric B\'ezier functions are 
 \begin{equation}\label{Eq:BT}
   \beta_{i,j}(s,t)\ :=\ \tfrac{d!}{i!j!(d-i-j)!}
              s^it^j(d-s-t)^{d-i-j}\,,
 \end{equation}
and the homogeneous polynomial is
\[
   F_{\calA,w} = \sum_{i+j+k=d} \tfrac{d!}{i!j!k!}x^iy^jz^k
    \ =\ (x+y+z)^d\,.
\]
Its toric polar linear system is
\[
   T(F_{\calA,w}) = \left\langle xd(x+y+z)^{d-1}, yd(x+y+z)^{d-1}, zd(x+y+z)^{d-1}\right\rangle
   \ \equiv\ \langle x, y, z\rangle\,,
\]
which defines the identity map $\P^2\to\P^2$.
Thus the patch with blending functions~\eqref{Eq:BT} has rational linear precision.
These blending functions are essentially the standard bivariate Bernstein polynomials,
which are used in B\'ezier triangles and have linear precision.
\eqed
\end{example}

\begin{example}[Trapezoids]\label{Ex:trapezoid}
Let $b,d\geq 1$ and $a\geq0$ be integers, and set 
\[
  \calA\ :=\ \{ (i,j)\,:\, 0\leq j\leq b\ 
    \mbox{ and }\  0\leq i\leq a+db-dj\}\,,
\]
which are the integer points inside the trapezoid below.
\[
  \begin{picture}(238,53)(-34,-8)
    \put(-3,-3){\includegraphics{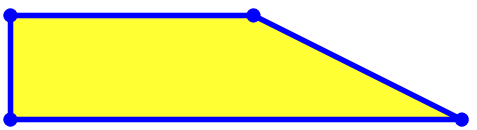}}
    \put( 80,30){$(a,b)$}
    \put(-30,-3){$(0,0)$}   
    \put(135,-3){$(a+db,0)$}
    \put(-30,30){$(0,b)$}  
  \end{picture}
\]
Choose weights  $\Blue{w_{i,j}}:=\binom{b}{j}\binom{a+db-dj}{i}$.
Then the toric B\'ezier functions are
 \begin{equation}\label{Eq:Trapezoid}
   \beta_{i,j}(s,t)\ :=\ \binom{b}{j}\binom{a+db-dj}{i}
       s^i(a+db-s-dt)^{a+db-dj-i} t^j(b-t)^{b-j}\,,
 \end{equation}
and the homogeneous polynomial  is
\[
   F_{\calA,w}\ =\ \sum_{j=0}^b\sum_{i=0}^{a+db-dj} \tbinom{b}{j}\tbinom{a+db-dj}{i}
      x^i y^j z^{a+db-i-j}\ =\ 
    (x+z)^a\bigl( (x+z)^d + y z^{d-1}\bigr)^b\,.
\]
The partial derivatives of $F_{\calA,w}$ have common factor
$(x+z)^{a-1}\bigl( (x+z)^d + y z^{d-1}\bigr)^{b-1}$.
%
%
%
%
%
%
%
%
Removing this and performing some linear algebra shows that 
 \[
    T(F_{\calA,w})\ \equiv\ \left\langle  (x+z)^{d+1},\ yz^{d-1}(x+z),\ 
    z( (a+bd)(x+z)^d + ayz^{d-1} )\right\rangle\,.
 \]
This has a base point at $[1:0:-1]$ of multiplicity 1 and 
one at $[0:1:0]$ of multiplicity $d$.
To see that it defines a birational map, work in the affine chart where $x+z\neq 0$,
and assume that $x=1-z$.
Then the corresponding rational map is
\[
  (y,z)\ \longmapsto\ (yz^{d-1}, (a+bd)z^d+yz^{d-1})\,.
\]
Changing coordinates, this is $(y,z)\mapsto(yz^{d-1},z)$,
which is a bijection when $z\neq 0$.
\eqed
\end{example}

\begin{remark}\label{R:trapezoid}
 The first three patches are widely used and implemented in CAD software.  
 The trapezoid patch reduces to the B\'ezier triangle when $a=0$ and $d=1$, and to the
 tensor product patch when $d=0$.
 While the trapezoid patch for general parameters has not been used explicitly in modeling, 
 special cases of it have appeared implicitly.
 For example, a rational ruled surface in $\R^3$ of degree $2a+d$ with directrix of
 minimal degree $a$ and general one of degree $a+d$~\cite[\S5.2]{PW01} is the image of such
 a patch (here, $b=1$). 
 B\'ezier quad patches on a sphere bounded by circular arcs of minimal type
 $(2,4)$~\cite{Kr06} are also trapezoidal.
 Some quad patches on rational canal surfaces~\cite{Kr07} can be represented by trapezoidal
 patches with $b=2$. 
 The full possibilities for the use of the trapezid patch in modeling have yet to be
 developed. 
\QED
\end{remark}

%
\section{Toric polar Cremona transformations}\label{S:Basics}
%

We classify toric surface  ($n=2$) patches with rational linear precision
through the algebraic relaxation of classifying the homogeneous polynomials 
(\Blue{{\sl forms}}) $F=F(x,y,z)\in \C[x,y,z]$ 
whose toric polar linear system defines a birational map $\P^2\rat\P^2$.
Write $F_x$ for $\frac{\partial}{\partial x}F$, and the same for the other variables
$y$ and $z$.
We will write $F=0$ or simply $F$ for the reduced  curve defined by $F$ in $\P^2$.

\begin{definition}
 Let $F$ be a form.
 The vector space $\Blue{T(F)}:=\langle xF_x, yF_y,zF_z \rangle$ defines the 
 \Blue{{\sl toric polar linear system}} of curves on $\P^2$ and the 
 \Blue{{\sl toric polar map}}
 \begin{eqnarray*}
    \Blue{\varphi_F}\ \colon\ \P^2 &\longrat& \P^2\\
        {} [\;x\ :\ y\ :\ z\;]&\longmapsto& [\;xF_x\ :\ yF_y\ :\ zF_z\;]\,,
 \end{eqnarray*}
 which maps curves in $T(F)$ into lines in the target $\P^2$.
We say that $F$ defines a 
\Blue{{\sl toric polar Cremona transformation}} if
 this map is birational. 
\end{definition}
We establish some elementary properties of such forms $F$ and linear 
systems $T(F)$, and then give our classification of forms that define toric
polar Cremona transformations.

%
%
\subsection{Equivalence of forms}
There are some transformations which send one form defining a toric polar Cremona
transformation into another such form. 
Those which are invertible define an equivalence relation on forms, and our classification
is up to this equivalence. 

\begin{lemma}\label{L:power}
 A form $F$ defines a toric polar Cremona transformation if and only if every power $F^a$
 for $a>0$ defines a toric polar Cremona transformation.
\end{lemma}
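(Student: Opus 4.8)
The plan is to relate the toric polar map of $F^a$ to that of $F$ by a direct computation of partial derivatives. Writing $G := F^a$, the chain rule gives $G_x = aF^{a-1}F_x$, and similarly for $y$ and $z$, so that
\[
   T(F^a)\ =\ \langle\, xG_x,\, yG_y,\, zG_z\,\rangle\ =\ \langle\, aF^{a-1}xF_x,\ aF^{a-1}yF_y,\ aF^{a-1}zF_z\,\rangle\,.
\]
Since $a \neq 0$, the scalar $a$ is harmless, and the common factor $F^{a-1}$ may be removed from all three generators without changing the associated rational map, by the convention on equivalence of linear systems recalled in the Notation paragraph. Hence $T(F^a) \equiv \langle xF_x, yF_y, zF_z\rangle = T(F)$ as linear systems, and the two define the same rational map $\varphi_{F^a} = \varphi_F$ up to this equivalence. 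Therefore $\varphi_{F^a}$ is birational precisely when $\varphi_F$ is, which is the claim.

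The one point that genuinely needs care — and which I expect to be the main (if minor) obstacle — is justifying that removing the common factor $F^{a-1}$ does not change the map as a \emph{rational} map, i.e. that $F^{a-1}$ is not identically zero and that the three forms $xF_x, yF_y, zF_z$ are not all divisible by some further common factor in a way that would interact badly. But $F^{a-1} \not\equiv 0$ since $F \not\equiv 0$ (a form defining a patch is nonzero), and the equivalence of linear systems under removal of a common polynomial factor is exactly the convention stated in the excerpt, so this is immediate. One should also note the degenerate-looking case where $xF_x = yF_y = zF_z = 0$ identically: this would force $F$ to be constant, hence not define a Cremona transformation on either side, so the biconditional holds vacuously there as well.

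In summary, the whole argument is: apply the chain rule, factor out $aF^{a-1}$, invoke the equivalence $\langle xG_x, yG_y, zG_z\rangle \equiv \langle xF_x, yF_y, zF_z\rangle$, and conclude that $\varphi_{F^a}$ and $\varphi_F$ are literally the same rational map. Birationality of one is then trivially equivalent to birationality of the other, and since $a > 0$ was arbitrary this gives both directions of the ``if and only if''. No algebraic geometry beyond the bookkeeping of common factors in linear systems is required.
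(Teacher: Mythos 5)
Your proof is correct and matches the paper's argument exactly: both apply the chain rule to write $T(F^a) = \langle aF^{a-1}xF_x, aF^{a-1}yF_y, aF^{a-1}zF_z\rangle$ and then remove the common factor $aF^{a-1}$ using the stated equivalence of linear systems. The extra caveats you raise (nonvanishing of $F$, the degenerate constant case) are harmless and not needed in the paper's terser version.
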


\noindent{\it Proof.}
 The toric polar linear systems of $F$ and $F^a$ are equivalent,
\[
   \left\langle x \tfrac{\partial F^a}{\partial x},\;
                y \tfrac{\partial F^a}{\partial y},\;
                z \tfrac{\partial F^a}{\partial z}\right\rangle\ =\ 
   \langle axF^{a-1}F_x,\, ayF^{a-1}F_y,\,azF^{a-1}F_z\rangle \ \equiv\ 
   \langle xF_x,\, yF_y,\,zF_z\rangle\,.
    \eqno{\QED}
\]

The linearity of differentiation implies that $F(x,y,z)$ defines a toric polar Cremona  
transformation if and only if $F(ax,by,cz)$ defines a toric polar Cremona  
transformation, for all non zero $a,b,c\in\C$.
Call this \Blue{{\sl scaling the variables}}.
Multiplication by a monomial also preserves the property of defining a toric polar Cremona
transformation. 

\begin{lemma}\label{L:monomial}
  A form $F$ defines a toric polar Cremona transformation if and only if
  $x^ay^bz^cF$ defines a toric polar Cremona transformation, for any positive integers
  $a,b,c$. 
\end{lemma}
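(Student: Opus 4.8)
The plan is to compute the toric polar linear system of $G := x^ay^bz^cF$ directly and show it is equivalent, in the sense of inducing the same rational map, to that of $F$. By the product rule,
\[
  x\frac{\partial G}{\partial x}\ =\ x^ay^bz^c\bigl(aF + xF_x\bigr)\,,
\]
and similarly $y\frac{\partial G}{\partial y} = x^ay^bz^c(bF + yF_y)$ and $z\frac{\partial G}{\partial z} = x^ay^bz^c(cF + zF_z)$. Since the common monomial factor $x^ay^bz^c$ can be removed without changing the induced map (as noted in the Notation paragraph), we have
\[
  T(G)\ \equiv\ \langle\, aF + xF_x,\ bF + yF_y,\ cF + zF_z\,\rangle\,.
\]

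The key step is then a linear-algebra comparison of the two three-dimensional spaces $\langle xF_x, yF_y, zF_z\rangle$ and $\langle aF+xF_x,\ bF+yF_y,\ cF+zF_z\rangle$ inside the space of forms of the appropriate degree. These two spaces need not be equal, but they differ only by adding multiples of the single extra form $F$; so both are contained in the span $V := \langle F, xF_x, yF_y, zF_z\rangle$. I would handle this via the Euler relation $dF = xF_x + yF_y + zF_z$, where $d = \deg F$, which shows $F \in \langle xF_x, yF_y, zF_z\rangle$. Hence in fact $V = T(F)$, so $T(G)$ and $T(F)$ are subspaces of the same space, and each contains the other's generators: indeed $aF + xF_x \in T(F)$ since both $F$ and $xF_x$ lie in $T(F)$, giving $T(G) \subseteq T(F)$; conversely $xF_x = (aF + xF_x) - a F$ and $F = \tfrac1d\sum(iF + x_iF_{x_i})$ expresses $F$ and hence each generator of $T(F)$ in terms of the generators of $T(G)$, giving $T(F)\subseteq T(G)$. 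Therefore $T(F) = T(G)$ as linear systems, and $\varphi_F = \varphi_G$ up to the removal of the monomial factor; in particular one is birational if and only if the other is.

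The one point requiring a little care — and the main (mild) obstacle — is the degenerate case where $F$ is divisible by one of $x$, $y$, $z$, or more generally where the Euler relation might conflate distinct generators, so that $T(F)$ has dimension less than $3$. In that situation the same argument still applies: the Euler identity places $F$ in $T(F)$ regardless, and the two-way containment above goes through verbatim with $T(F)$ and $T(G)$ of equal (possibly smaller) dimension, so the induced maps still coincide. Finally, reducing the general statement to the case of a single extra variable power is immediate by iterating: $x^ay^bz^cF = x^a\cdot y^b\cdot z^c\cdot F$ and we apply the one-variable case three times. This completes the proof.
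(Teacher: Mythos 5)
Your proof is correct and follows essentially the same route as the paper: compute $T(x^ay^bz^cF)$, strip the common monomial factor, and use the Euler relation $xF_x+yF_y+zF_z=\deg(F)\,F$ to show the resulting span equals $T(F)$. The paper does this for a single factor $x$ and notes that this suffices, whereas you handle all three exponents at once and iterate only as a (redundant) afterthought; the substance is the same, though your formula $F=\tfrac1d\sum(iF+x_iF_{x_i})$ should read $F=\tfrac{1}{a+b+c+d}\bigl[(aF+xF_x)+(bF+yF_y)+(cF+zF_z)\bigr]$.
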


\begin{proof}
  It suffices to check that $T(xF)\equiv T(F)$.
  Note that $T(xF)$ is
\[
      \left\langle x \tfrac{\partial}{\partial x} xF,\;
                y \tfrac{\partial}{\partial y} xF,\;
                z \tfrac{\partial}{\partial z} xF\right\rangle\ =\ 
   \langle xF + x^2F_x,\, yxF_y,\,zxF_z\rangle \ \equiv\ 
   \langle xF_x,\, yF_y,\,zF_z\rangle\,,
\]
  which is $T(F)$.
  The last equivalence follows by removing the common factor $x$ and
  applying the Euler relation, which is $xF_x+yF_y+zF_z=\deg(F)F$.
\end{proof}

The calculations in the proof of Lemma~\ref{L:monomial}
hold when the exponents $a,b,c$
are \Blue{{\sl any}} integers.
Consequently, $F$ may be any homogeneous Laurent polynomial.
For example,
\[
   y^{-1}z^{-1} + x^{-2}yz^{-1} + x^{-2}y^{-1}z 
   - 2 ( x^{-1}z^{-1} + x^{-1}y^{-1} + x^{-2} )
\]
is a Laurent form defining a toric polar Cremona transformation.
(This is the form of Theorem~\ref{T:Classification}(3) with $d=1$ multiplied by the
monomial $x^{-2}y^{-1}z^{-1}$.) 

A third class of transformations are the invertible monomial transformations.
A vector $\alpha=(\alpha_1,\alpha_2,\alpha_3)\in\Z^3$ corresponds to a (Laurent) monomial
$\Blue{t^\alpha}:=x^{\alpha_1}y^{\alpha_2}z^{\alpha_3}$ of degree $\Blue{|\alpha|}:=\alpha_1+\alpha_2+\alpha_3$.
Let $\alpha,\beta,\gamma\in\Z^3$ be three exponent vectors and consider the map
$\C^3\to\C^3$ defined by
 \begin{equation}\label{Eq:monom_map}
    (x,y,z)\ \longmapsto\ (t^\alpha,t^\beta,t^\gamma)\,.
 \end{equation}
%

\begin{lemma}\label{L:monomial trans}
 The formula~$\eqref{Eq:monom_map}$ defines a rational map $\P^2\rat\P^2$ if and only if\/
 $|\alpha|=|\beta|=|\gamma|$. 
 This map is invertible if and only if $\alpha-\gamma$ and $\beta-\gamma$ form a basis for 
 $\{v\in\Z^3  :  |v|=0\}$.
\end{lemma}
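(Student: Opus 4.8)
The plan is to analyze the map~\eqref{Eq:monom_map} via the monomial exponents directly, translating everything into statements about the lattice $\Z^3$ and the sublattice $L := \{v \in \Z^3 : |v| = 0\}$. First I would address when~\eqref{Eq:monom_map} descends to a well-defined rational map $\P^2 \rat \P^2$. On the torus $(\C^\times)^3$, the formula makes sense, but to descend to projective space we must have $(t^\alpha : t^\beta : t^\gamma)$ depend only on $[x:y:z]$, i.e. be invariant under scaling $(x,y,z) \mapsto (\lambda x, \lambda y, \lambda z)$. Under this scaling the triple becomes $(\lambda^{|\alpha|} t^\alpha : \lambda^{|\beta|} t^\beta : \lambda^{|\gamma|} t^\gamma)$, and this equals $(t^\alpha : t^\beta : t^\gamma)$ in $\P^2$ for all $\lambda$ if and only if $|\alpha| = |\beta| = |\gamma|$. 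Conversely, if the common degree condition holds, the map is a genuine rational map (defined wherever not all three monomials vanish). This gives the first equivalence.

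Next I would handle invertibility. Assuming $|\alpha| = |\beta| = |\gamma| =: m$, dividing through by $t^\gamma$ (a Laurent monomial, harmless for the rational map by the normalization remarks, or just work on the torus) reduces us to the map $(x,y,z) \mapsto (t^{\alpha - \gamma} : t^{\beta-\gamma} : 1)$, whose two nontrivial coordinates have exponent vectors $\alpha - \gamma, \beta - \gamma \in L$. Restricting to the torus $(\C^\times)^2 \subset \P^2$ (coordinates $u = x/z$, $v = y/z$ say, or more canonically using that $L \cong \Z^2$), the map is the homomorphism of tori $(\C^\times)^2 \to (\C^\times)^2$ corresponding to the homomorphism of character lattices sending the standard basis of $\Z^2$ to $\alpha - \gamma$ and $\beta - \gamma$. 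A homomorphism of tori is an isomorphism if and only if the corresponding lattice map is an isomorphism, i.e. if and only if $\{\alpha - \gamma, \beta - \gamma\}$ is a $\Z$-basis of $L$. Since a birational map of $\P^2$ is determined by its restriction to the dense torus, and conversely a torus isomorphism extends to a birational map of the ambient $\P^2$, this yields exactly the stated criterion.

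The one point requiring care — and the main obstacle — is making precise that $L \cong \Z^2$ in a way compatible with the torus $(\C^\times)^2 \subset \P^2$, so that "basis for $\{v \in \Z^3 : |v| = 0\}$" is the right condition rather than, say, "basis for $\Z^3 / (\text{diagonal})$" or a sublattice of finite index thereof. Here I would note that the character lattice of the torus $(\C^\times)^2 = \{[x:y:z] : xyz \neq 0\} \subset \P^2$ is precisely $L$: a character is a Laurent monomial $x^a y^b z^c$ that is homogeneous of degree $0$, i.e. $(a,b,c) \in L$, and the map~\eqref{Eq:monom_map} (after clearing $t^\gamma$) is pullback along the lattice map $e_1 \mapsto \alpha - \gamma$, $e_2 \mapsto \beta - \gamma$ from $\Z^2 = L$ (any chosen basis) to $L$. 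An isomorphism of tori is equivalent to a lattice automorphism, and a lattice endomorphism of $L \cong \Z^2$ is an automorphism precisely when the images of a basis again form a basis, giving the claim. I would also remark that one can verify the "if" direction concretely: if $\alpha - \gamma, \beta - \gamma$ form a basis of $L$, the inverse map is again of the form~\eqref{Eq:monom_map} for the dual basis, which one can write down explicitly; and the "only if" direction by observing that if they span only a finite-index sublattice of index $k > 1$, the map is generically $k$-to-$1$ on the torus, hence not birational, while if they are linearly dependent over $\Q$ the image is not dense.
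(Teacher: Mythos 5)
Your proof is correct and takes essentially the same approach as the paper: both reduce to the action on the dense torus, identify the relevant lattice as $\{v\in\Z^3:|v|=0\}$, and equate invertibility with the difference vectors forming a basis. The paper phrases the second part as a kernel computation (the torus map is injective iff the only solutions of $s^{\alpha-\gamma}=1=s^{\beta-\gamma}$ lie in the diagonal), whereas you phrase the dual statement via the character lattice, but these are the same argument.
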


We prove Lemma~\ref{L:monomial trans} later.
Suppose that $\calA:=\{\alpha,\beta,\gamma\}\subset\Z^3$ satisfies the hypotheses of
Lemma~\ref{L:monomial trans} so that the map $\Blue{\varphi_\calA}\colon\P^2\rat\P^2$
defined by~\eqref{Eq:monom_map} is a birational isomorphism.
It induces a map $\Blue{\varphi^*_\calA}$ on monomials $x^ay^bz^c$ by
 \begin{equation}\label{Eq:imt}
   \varphi^*_\calA(x^ay^bz^c)\ :=\ t^{\alpha a+\beta b+\gamma c}\ =:\ 
    \Blue{t^{\Blue{\calA {\bf a}}}}\,,
 \end{equation}
 where ${\bf a}:=(a,b,c)^T$ and $\calA {\bf a}$ is the multiplication of the 
 vector ${\bf a}$ by the matrix $\calA$ whose columns are $\alpha,\beta,\gamma$.
 When $\calA\in\mbox{Mat}_{3\times 3}\Q$ is invertible and satisfies the hypothesis of
 Lemma~\ref{L:monomial trans}, we call $\varphi^*_\calA$ an 
 \Blue{{\sl invertible monomial transformation}}. 
 Under the hypotheses of Lemma~\ref{L:monomial trans}, the condition that $\calA$ is
 invertible is equivalent to $|\alpha|\neq 0$.

\begin{lemma}\label{L:imt}
  A form $F$ defines a toric polar Cremona transformation if and only if
  $\varphi^*_\calA(F)$ does for any invertible monomial transformation $\varphi^*_\calA$. 
\end{lemma}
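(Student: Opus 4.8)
The plan is to show that $\varphi^*_\calA$ interacts well with the toric derivations $x\frac{\partial}{\partial x}$, $y\frac{\partial}{\partial y}$, $z\frac{\partial}{\partial z}$, so that applying $\varphi^*_\calA$ to $F$ has the effect of composing the toric polar map $\varphi_F$ with the invertible map $\varphi_\calA$ (and its inverse) on source and target. Since composition with birational isomorphisms preserves birationality, this will give the claim. Concretely, I would first record the key computational fact: the logarithmic derivations $D_x := x\frac{\partial}{\partial x}$, etc., act diagonally on monomials, with $D_x(x^ay^bz^c) = a\,x^ay^bz^c$. Writing $\delta := (D_x, D_y, D_z)^T$ as a vector of operators, this says $\delta(t^{\mathbf a}) = \mathbf a \cdot t^{\mathbf a}$ for $\mathbf a = (a,b,c)^T$.

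Next I would compute how $\delta$ transforms under $\varphi^*_\calA$. For a monomial $t^{\mathbf a}$ we have $\varphi^*_\calA(t^{\mathbf a}) = t^{\calA\mathbf a}$ by~\eqref{Eq:imt}, and hence $\delta(\varphi^*_\calA(t^{\mathbf a})) = (\calA\mathbf a)\cdot t^{\calA\mathbf a} = \calA\bigl(\mathbf a\cdot \varphi^*_\calA(t^{\mathbf a})\bigr) = \calA\,\varphi^*_\calA\bigl(\delta(t^{\mathbf a})\bigr)$, where in the last step I use that $\varphi^*_\calA$ is multiplicative and acts as the identity on scalars. By linearity this extends from monomials to all forms: for any form $F$,
\[
  \delta\bigl(\varphi^*_\calA(F)\bigr)\ =\ \calA\cdot \varphi^*_\calA\bigl(\delta(F)\bigr)\,.
\]
In components, this says that the vector of generators $\bigl(x\tfrac{\partial}{\partial x},\,y\tfrac{\partial}{\partial y},\,z\tfrac{\partial}{\partial z}\bigr)\varphi^*_\calA(F)$ is obtained from $\varphi^*_\calA\bigl(xF_x,\,yF_y,\,zF_z\bigr)$ by the invertible linear change $\calA$ (invertible because $\varphi^*_\calA$ is an \emph{invertible} monomial transformation, so $\calA\in\mathrm{Mat}_{3\times3}\Q$ is invertible). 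In particular $T(\varphi^*_\calA(F))$ and the componentwise $\varphi^*_\calA$-image of $T(F)$ span the same linear system.

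Finally I would assemble these facts into a diagram. An invertible linear change of the target coordinates does not affect whether a rational map $\P^2\rat\P^2$ is birational, so $\varphi_{\varphi^*_\calA(F)}$ is birational if and only if the map $[x:y:z]\mapsto \varphi^*_\calA(xF_x):\varphi^*_\calA(yF_y):\varphi^*_\calA(zF_z)$ is. But this latter map factors as $\varphi_F\circ\varphi_\calA$: one first applies $\varphi_\calA$ (the substitution $x\mapsto t^\alpha$, etc., which \emph{is} $\varphi^*_\calA$ on coordinate functions), then $\varphi_F$. Since $\varphi_\calA$ is a birational isomorphism by Lemma~\ref{L:monomial trans} (together with the assumption $|\alpha|\neq 0$ guaranteeing $\calA$ invertible), the composite is birational precisely when $\varphi_F$ is. This proves both directions at once, using that $\varphi^*_{\calA^{-1}}$ is the inverse transformation. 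The only point needing a little care — and the main potential obstacle — is bookkeeping with the rational maps: $\varphi_\calA$ and $\varphi_F$ are only defined on dense opens, so I should phrase the factorization as an equality of rational maps (valid on the torus $(\C^\times)^2$, which is dense), and invoke that birationality is detected on any dense open. Everything else is the routine identity $\delta\circ\varphi^*_\calA = \calA\cdot(\varphi^*_\calA\circ\delta)$ verified above on monomials.
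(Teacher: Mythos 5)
Your proposal is correct and follows essentially the same route as the paper: verify the operator identity $\delta\circ\varphi^*_\calA=\calA\cdot(\varphi^*_\calA\circ\delta)$ on monomials, use invertibility of $\calA$ to conclude $T(\varphi^*_\calA(F))$ and $\varphi^*_\calA(T(F))$ span the same system, and then invoke birationality of $\varphi_\calA$ (Lemma~\ref{L:monomial trans}). You spell out the factorization $\varphi_{\varphi^*_\calA(F)}=\calA\cdot(\varphi_F\circ\varphi_\calA)$ a bit more explicitly than the paper does, but the underlying argument is the same.
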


\begin{proof}
 By~\eqref{Eq:imt}, the toric derivative 
 $x\frac{\partial}{\partial x}\varphi^*_\calA(t^{\bf a})$
 is 
\[
   (A_1\cdot {\bf a}) t^{\calA {\bf a}}\ =\ 
  \varphi^*_\calA((A_1\cdot {\bf a})t^{\bf a})
  \ =\  A_1\cdot \varphi^*_\calA( x\tfrac{\partial}{\partial x}t^{{\bf a}},\,
                          y\tfrac{\partial}{\partial y}t^{{\bf a}},\,
                          z\tfrac{\partial}{\partial z}t^{{\bf a}})^T\,,
\]
 where 
 $A_1$ is the first row of the matrix $\calA$.
 Thus
 \begin{eqnarray*}
     \left(  x\tfrac{\partial}{\partial x}\varphi^*_\calA(t^{{\bf a}}),\,
             y\tfrac{\partial}{\partial y}\varphi^*_\calA(t^{{\bf a}}),\,
             z\tfrac{\partial}{\partial z}\varphi^*_\calA(t^{{\bf a}}) \right)^T
     &=&
     \calA\left(\varphi^*_\calA( x\tfrac{\partial}{\partial x}t^{{\bf a}},\,
                          y\tfrac{\partial}{\partial y}t^{{\bf a}},\,
                          z\tfrac{\partial}{\partial z}t^{{\bf a}})^T\right)\,,\\
  &=& \varphi^*_\calA( x\tfrac{\partial}{\partial x}t^{{\bf a}},\,
                          y\tfrac{\partial}{\partial y}t^{{\bf a}},\,
                          z\tfrac{\partial}{\partial z}t^{{\bf a}})^T\,.
 \end{eqnarray*}
 as $\calA$ is invertible.
 Thus we have the relation between the toric polar linear systems
\[
   T( \varphi^*_\calA(F))\ =\ \varphi^*_\calA(T(F))
\]
 for any homogeneous polynomial $F$.
 The lemma follows as $\varphi_\calA$ is birational.
\end{proof}

\begin{definition}
 Let $\Blue{\C[x^{\pm}]}:=\C[x,x^{-1}, y,y^{-1}, z,z^{-1}]$ be the ring of Laurent
 polynomials, the coordinate ring of the torus $(\C^*)^3$.
 This is $\Z$-graded by the total degree of a monomial.
 Forms $F,G\in\C[x^{\pm}]$ are \Blue{{\sl equivalent}} if 
 $G=\psi^*(F)$, where $\psi$ is a composition of
\begin{itemize}
  \item[(a)] scaling variables, $[x:y:z]\mapsto [ax:by:cz]$, or
  \item[(b)] multiplication by a monomial, or
  \item[(c)] an invertible monomial transformation.
\end{itemize}
 Our classification is up to this equivalence.
 By (b), it is no loss to assume that a Laurent form $F$ is an ordinary form
 (in $\C[x,y,z]$).
\end{definition}

\begin{proof}[Proof of Lemma~$\ref{L:monomial trans}$]
 The first statement is clear as a rational map $\C^3\to\C^3$ drops to a map
 $\P^2\rat\P^2$ if and only if it is defined by homogeneous rational forms of the same
 degree. 
 For the second, consider the map $(\C^*)^3\to\P^2$ defined by
\[
  \varphi\ \colon\ t:=(x,y,z)\ \longmapsto\ [t^\alpha\,:\,t^\beta\,:\,t^\gamma]\,,
\]
 and suppose that we have $s,t\in(\C^*)^3$ with $\varphi(s)=\varphi(t)$. After rescaling
 in the source, we may assume that $t=(1,1,1)$.
 In particular, $s$ is a solution of
\[
   s^\alpha\ =\ \lambda\,,\qquad
   s^\beta\ =\ \lambda\,,\qquad\mbox{and}\qquad
   s^\gamma\ =\ \lambda\,,
\]
 for some $\lambda\in\C^*$.
 But we also have
 \begin{equation}\label{Eq:u_eq}
   s^{\alpha-\gamma}\ =\ 1\ =\ s^{\beta-\gamma}\,.
 \end{equation}
 Since $|\alpha|=|\beta|=|\gamma|$, solutions to~\eqref{Eq:u_eq} include the diagonal
 torus $\Delta:=\{(a,a,a) :  a\in\C^*\}$, so we see again that $\varphi$ is defined on
 the dense torus $(\C^*)^3/\Delta$ of $\P^2$.
 This map on the dense torus is an isomorphism if and only if points of the diagonal torus
 are the only solutions to~\eqref{Eq:u_eq},
 which is equivalent to the condition that the exponents $\alpha-\gamma$ and
 $\beta-\gamma$ are a basis for the free abelian group
 $\{v\in\Z^3 :  |v|=0\}$.
\end{proof}

%
%
\subsection{Proof of Theorem~\ref{T:Classification}}
Let $F$ be a form defining a toric polar Cremona transformation.
We first classify the possible irreducible factors of $F$, and then 
determine which factors may occur together.
The classification of the factors of $F$ occupies
Sections~\ref{S:contracted}, \ref{S:not_contracted}, and~\ref{S:technical}.

Under the rational map $\varphi_F$ each component of each curve in the toric polar linear
system is either contracted (mapped to a point) or mapped to a dense subset of a curve.
As $F\in T(F)$, this in particular holds for the curve $F=0$, whose
components correspond to the irreducible factors of $F$.
(We always take the reduced structure on this curve, that is,  we consider the set of zeroes
of $F$, not the scheme defined by $F$.)
An irreducible factor of $F$ is \Blue{{\sl contracted}}, respectively 
\Blue{{\sl not contracted}}, if the corresponding component of the curve $F=0$ is
contracted by the linear system $T(F)$, respectively not contracted. 
There are three possibilities for the factors of $F$.
 \begin{equation}\label{Eq:Trilemma}
  \begin{minipage}[c]{12cm}
     \begin{enumerate}
      \item $F$ has no contracted factors, or 
      \item $F$ has only contracted factors, or 
      \item $F$ has both contracted and non contracted factors.
     \end{enumerate}
  \end{minipage}
 \end{equation}

We get information about the factors of $F$ from a  simple, but useful restriction lemma.

\begin{lemma}\label{L:restriction}
 Suppose that $G$ is an irreducible factor of a form $F$.
 Then the restrictions of the toric polar linear systems of $F$ and of $G$ to the curve
 $G=0$ are equivalent.
\end{lemma}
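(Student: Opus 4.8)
The plan is to write $F = G^k H$ where $G \nmid H$, and compute the toric derivatives of $F$ modulo $G$. By the product rule, $x F_x = k x G^{k-1} G_x H + G^k x H_x = G^{k-1}\bigl(k x G_x H + G\, x H_x\bigr)$, and similarly for $y$ and $z$. Removing the common factor $G^{k-1}$, the linear system $T(F)$ is equivalent to
\[
  \bigl\langle\, k x G_x H + G\, x H_x,\ k y G_y H + G\, y H_y,\ k z G_z H + G\, z H_z \,\bigr\rangle\,.
\]
Restricting these three forms to the curve $G = 0$ kills the terms containing the factor $G$, leaving $\langle k x G_x H,\ k y G_y H,\ k z G_z H\rangle|_{G=0}$. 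Since $H$ does not vanish identically on the irreducible curve $G=0$ (as $G \nmid H$), and $k \neq 0$, dividing by the nonzero scalar $k$ and by the common function $H|_{G=0}$ (which is permissible: removing a common factor from a linear system does not change the map it defines, here the restricted map on the curve $G=0$) shows this restricted system is equivalent to $\langle x G_x,\ y G_y,\ z G_z\rangle|_{G=0} = T(G)|_{G=0}$, which is exactly the statement.

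First I would set up the notation $F = G^k H$ and carry out the product-rule computation above to identify the common factor $G^{k-1}$ in the three generators of $T(F)$; removing it gives the displayed equivalent system. Next I would restrict to $G = 0$: the key observation is that each generator is a sum of a term divisible by $G^{k-1}$ only (the $k x G_x H$ piece) and a term divisible by $G^k$ (the $G\, x H_x$ piece), so after dividing out $G^{k-1}$ the second piece still carries a factor of $G$ and hence vanishes on the curve. Finally I would argue that $H$ restricts to a nonzero rational function on the irreducible curve $G = 0$ — this is where irreducibility of $G$ and the hypothesis $G \nmid H$ are used — so it is a common factor of the three restricted forms that may be cancelled under the equivalence of linear systems (as stipulated in the Notation paragraph, a common factor can be removed without changing the induced map). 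What remains after cancelling $H$ and the scalar $k$ is $T(G)|_{G=0}$.

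The only subtle point — and the one I would phrase most carefully — is the passage from ``$H$ is not divisible by $G$'' to ``$H|_{G=0}$ is a legitimate common factor one may remove.'' One should note that $H$ may still vanish at finitely many points of the curve $G=0$; removing it as a common factor is a statement about the rational map defined by the restricted linear system on the curve, which is unaffected by such a finite set. It is worth stating explicitly that both sides of the asserted equivalence are linear systems on the \emph{curve} $G=0$, so ``equivalent'' means ``defining the same rational map from $G=0$ to projective space,'' matching the convention fixed in the Notation paragraph. Apart from that, the proof is a direct computation, so I would keep it to a few lines.
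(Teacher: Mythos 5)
Your proposal is correct and follows essentially the same route as the paper: write $F=G^kH$ with $G\nmid H$, factor $G^{k-1}$ out of the toric derivatives, restrict to $G=0$, and cancel the common factor $kH$. The paper's proof is a terser version of exactly this computation; your extra remarks on why $H|_{G=0}$ is a legitimate common factor to remove are a sound elaboration rather than a deviation.
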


\noindent{\it Proof.}
 Write $F = G^nH$ with $G$ and $H$ coprime. 
 Then 
 \[ 
   T(F)\ =\ 
    \langle nxG^{n-1}G_xH + xG^nH_x, \ 
            nyG^{n-1}G_yH + yG^nH_y, \ 
            nzG^{n-1}G_zH + zG^nH_z \rangle\,.
 \]
 After factoring out $G^{n-1}$, restricting to $G=0$, and factoring out $nH$ we obtain
 \[
    T(F)|_G\ \equiv \ \langle xG_x, yG_y, zG_z \rangle|_{G}\ =\  T(G)|_G\,.\eqno{\QED}
 \]
%

A cornerstone of our classification is that there is a strong restriction on the 
singularities of the curve $F=0$.
A singular point $p$ of a curve is \Blue{{\it ordinary}} if locally near $p$, the curve
consists of $r>1$ smooth branches that meet transversally at $p$.
In Section~\ref{S:technical} we prove the following theorem.
\medskip

\begin{theorem}\label{Th:singularities}
  If a form $F$ coprime to $xyz$ defines a toric polar
  Cremona transformation, then the curve $F=0$ has at most one singular point
  outside the coordinate lines, and if there is such a point, then all factors of $F$ are
  contracted.  Furthermore, if this singular point is ordinary,
 then $F$ has at most two distinct factors through the singular point.
\end{theorem}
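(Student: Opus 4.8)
The plan is to resolve the base points of the toric polar linear system and read the three assertions off the numerical identities that a homaloidal net must satisfy, comparing the net with its distinguished member $F=0$. First I would separate the fixed part. Write $F=\prod_k G_k^{e_k}$ with the $G_k$ irreducible and distinct, put $\Phi=\prod_k G_k^{e_k-1}$ and $R=\prod_k G_k$, so that the curve ``$F=0$'' of the statement is $R=0$. Since $F$ is coprime to $xyz$ one checks that $\Phi$ is exactly the common factor of $xF_x,yF_y,zF_z$, so the moving part $\mathcal L:=T(F)/\Phi$ is a fixed-component-free linear system of degree $d:=\deg R$ defining $\varphi_F$, hence a homaloidal net. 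By Euler's relation $F\in T(F)$, so $R=F/\Phi$ is a member of $\mathcal L$: the reduced curve $F=0$ is a member of the homaloidal net $\mathcal L$. A local computation then shows that at a point $p$ off the coordinate lines one has $\operatorname{mult}_p(\mathcal L)=\operatorname{mult}_p(F=0)-1$; equivalently, the base points of $\mathcal L$ off the coordinate lines are precisely the singular points of $F=0$ there, each occurring with multiplicity one less than that of $F=0$.

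Next, let $\pi\colon S\to\P^2$ blow up all base points $p_1,\dots,p_N$ of $\mathcal L$ (infinitely near ones included), with total transforms $E_i$, $H=\pi^*\mathcal O_{\P^2}(1)$, and $m_i=\operatorname{mult}_{p_i}(\mathcal L)$. Then $\widetilde{\mathcal L}=dH-\sum m_iE_i$ is base-point free and defines a birational morphism $\psi\colon S\to\P^2$ with $\widetilde{\mathcal L}=\psi^*\mathcal O(1)$, and from $\widetilde{\mathcal L}^2=1$ and $-K_S\cdot\widetilde{\mathcal L}=3$ one gets Noether's equalities $\sum m_i^2=d^2-1$ and $\sum m_i=3(d-1)$. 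Writing $\mu_i=\operatorname{mult}_{p_i}(F=0)$, the strict transform $\widetilde R=dH-\sum\mu_iE_i$ of $R=0$ satisfies $\mu_i\ge m_i$ (because $R=0$ is a member of $\mathcal L$) and $\widetilde{\mathcal L}\cdot\widetilde R=d^2-\sum m_i\mu_i$. Intersecting component by component, $\widetilde{\mathcal L}\cdot\widetilde R=\sum_k\widetilde{\mathcal L}\cdot\widetilde{G_k}\ge 0$, and the $k$th summand vanishes exactly when $\psi$ contracts $\widetilde{G_k}$, that is, when the factor $G_k$ is contracted by $\varphi_F$; hence $d^2-\sum m_i\mu_i\ge 0$, with equality precisely when all factors of $F$ are contracted. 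Subtracting the first Noether equality gives $\sum m_i(\mu_i-m_i)=\sum m_i\mu_i-(d^2-1)\in\{0,1\}$, and this sum equals $1$ exactly when every factor of $F$ is contracted.

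The three assertions now follow at once. Suppose $F=0$ has a singular point $p$ off the coordinate lines, of multiplicity $r\ge 2$. Then $\mu_p=r$ and $m_p=r-1$, so $p$ contributes $m_p(\mu_p-m_p)=r-1\ge 1$ to the nonnegative sum $\sum m_i(\mu_i-m_i)$, which is at most $1$. Therefore that sum equals $1$ — so all factors of $F$ are contracted, which is the second assertion — the term $r-1$ must equal $1$, so $r=2$ and $p$ is a double point, and no second such singular point can exist, since two of them would already contribute at least $2$; this is the first assertion. Finally, because $p$ is a double point the local branches of $F=0$ at $p$ have total multiplicity $2$, so at most two distinct irreducible factors of $F$ pass through $p$; if $p$ is an ordinary double point these are two smooth branches crossing transversally, giving the third assertion.

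The delicate point is the local identity $\operatorname{mult}_p(\mathcal L)=\operatorname{mult}_p(F=0)-1$ at a point $p$ off the coordinate lines that is a singular point of $F=0$. The inequality $\operatorname{mult}_p(\mathcal L)\le\operatorname{mult}_p(F=0)-1$ comes from the fact that the tangent cone of $F=0$ at $p$ is a nonzero form of positive degree, so at least one of its partial derivatives is nonzero, whence at least one of $xF_x,yF_y,zF_z$ drops order by exactly $1$ at $p$; the reverse inequality, and the bookkeeping that this persists after dividing out the fixed part $\Phi$ (so that $\operatorname{mult}_p(\mathcal L)$ really is $\operatorname{mult}_p(F=0)-1$ and not smaller), is where some care with the non-reduced structure of $F$ is required. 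The remaining steps are routine intersection theory on the blow-up together with Noether's equalities.
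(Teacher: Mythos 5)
Your strategy is genuinely different from the paper's: you replace the paper's local analysis (resolution of the base scheme over $p$, the strict inequality of multiplicities for virtual transforms, and the restriction of the resolved net to a leaf exceptional curve) with a single global count via Noether's equalities for a homaloidal net. If it worked it would be cleaner, and the reductions in the first two paragraphs (identifying $\Phi$ as the fixed part, $R=F/\Phi$ as a member of the moving part $\mathcal{L}$, and $\operatorname{mult}_p(\mathcal{L})=\operatorname{mult}_p(R)-1$ off the coordinate axes) are correct and match what the paper does implicitly.

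The gap is the unjustified claim that $\mu_i\geq m_i$ for \emph{all} base points, including infinitely near ones. You write $\widetilde R=dH-\sum\mu_iE_i$ as the strict transform of $R$, so $\mu_i$ must be the multiplicity of the \emph{strict} transform of $R$ at $p_i$; for infinitely near $p_i$ this need not dominate $m_i$, even though $R\in\mathcal{L}$. The reason is that the strict transform of $R$ can simply miss an infinitely near base point of $\mathcal{L}$: what passes through $p_i$ with multiplicity $\geq m_i$ is the \emph{virtual} transform $R_{(i)}$, which differs from the strict transform by a non-negative combination of exceptional curves whenever $R$ has higher multiplicity than $\mathcal{L}$ at an earlier point of the chain. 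As a concrete illustration outside the toric setting, in the net $\langle x^2,xz,yz\rangle$ the base cluster has a simple point at $[0:1:0]$ followed by an infinitely near simple point $p_1$ in the direction $z=0$, but the strict transform of the member $x=0$ meets the first exceptional curve in the opposite direction and has multiplicity $0<m_{p_1}=1$ at $p_1$. Once the inequality $\mu_i\geq m_i$ is allowed to fail, your central deduction collapses: the terms $m_i(\mu_i-m_i)$ need not be non-negative, so from $\sum m_i(\mu_i-m_i)\leq 1$ you can no longer conclude that the single term $m_p(\mu_p-m_p)=r-1$ is itself $\leq 1$, nor that a second singular point off the axes would force the total above $1$, nor (by comparing with $\widetilde{\mathcal{L}}\cdot\widetilde R\geq 0$) that equality holds and all factors are contracted. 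All three conclusions rest on the same missing step.

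The paper sidesteps exactly this difficulty by working with virtual transforms from the start: Lemma~\ref{L:germ_mult} and Lemma~\ref{L:inequality} prove the strict inequality $\mu_i(f_{(i)})>\mu_i(L)$ for the \emph{virtual} transform at every infinitely near base point, which is the correct monotonicity statement; Corollary~\ref{C:leaves} then forces the exceptional curve at a leaf to be a non-contracted component of the virtual transform of $\sqrt{F}$, and the birationality of $\sqrt{T(F)}$ allows only one such component. If you want to keep your Noether-equality framework you would need to decompose the virtual transform as $\widetilde R_{\mathrm{strict}}$ plus an effective exceptional divisor and run the intersection computation on both pieces, which amounts to re-deriving the paper's leaf argument; as written, the step ``$\mu_i\geq m_i$'' leaves a genuine hole.
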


Since the toric polar map $\varphi_F$ is birational,
each component of a curve in the toric polar linear system $T(F)$ is either contracted 
or mapped birationally onto a line, and at most one component of a 
curve is not contracted.
This in particular holds for $F$.

\begin{cor}\label{Co:non_contr}
  At most one factor of $F$ is not contracted.
\end{cor}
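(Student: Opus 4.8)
The plan is to derive this directly from the birationality of $\varphi_F$, using only the observation that the curve $F=0$ itself belongs to the toric polar linear system. Indeed, the Euler relation $xF_x+yF_y+zF_z=(\deg F)F$ (and the case $\deg F=0$, where $F$ is a nonzero constant with no factors, is vacuous) shows that $F\in T(F)$. Since every curve in $T(F)$ is mapped by $\varphi_F$ into a line of the target $\P^2$, the curve $F=0$ is carried into the line $\ell:=\{X+Y+Z=0\}$: a point $p$ with $F(p)=0$ satisfies $xF_x(p)+yF_y(p)+zF_z(p)=0$, i.e. $\varphi_F(p)\in\ell$.

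Next I would examine the irreducible components $C_1,\dots,C_k$ of the reduced curve $F=0$, which are exactly the distinct irreducible factors of $F$. Each $C_i$ is irreducible, so its image under $\varphi_F$ is an irreducible constructible subset of the line $\ell\cong\P^1$; hence it is either a single point (precisely when $C_i$ is contracted) or a dense subset of $\ell$, in which case $\varphi_F|_{C_i}$ is dominant onto $\ell$ with finite general fibres. The claim is then that at most one $C_i$ can be of the second type, and I would prove this by a fibre-cardinality count: if $C_1$ and $C_2$ were both not contracted, then for a general point $q\in\ell$ there is a point $p_1\in C_1$ with $\varphi_F(p_1)=q$, and a point $p_2\in C_2$ with $\varphi_F(p_2)=q$; choosing $q$ outside the (finite) image of $C_1\cap C_2$, of the base locus of $T(F)$, and of the other intersection loci, we may take $p_1\notin C_2$ and $p_2\notin C_1$, so $p_1\neq p_2$. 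Thus the general fibre of $\varphi_F$ would contain at least two points, contradicting that $\varphi_F$, being birational, has degree one.

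I do not expect a genuine obstacle here: the statement is essentially a specialization to the member $F=0$ of the general fact, recorded just above, that a curve in $T(F)$ has at most one non-contracted component. The only point needing (minor) care is the verification that for a \emph{general} $q\in\ell$ one really produces two \emph{distinct} preimages — that is, that the components $C_1,C_2$, their mutual intersection, and their intersections with the base locus of $T(F)$ contribute only finitely many exceptional points of $\ell$ to be avoided — which is immediate since all these loci are proper closed (hence finite) subsets of the curves involved.
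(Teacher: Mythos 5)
Your proposal is correct and follows essentially the same approach as the paper: the paper observes (via the Euler relation) that $F\in T(F)$, hence $F=0$ maps into a line, and then states directly that birationality of $\varphi_F$ forces at most one component to be non-contracted, precisely the fibre-count you spell out in more detail.
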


In Section~\ref{S:not_contracted} we classify the possible non contracted factors of $F$.

\begin{theorem}\label{T:not_contracted}
 If $F$ is an irreducible form defining a curve with no
 singularities outside the coordinate lines whose toric polar linear
 system maps this curve birationally onto a line,
 then $F$ is equivalent to one of the following forms,
 \begin{enumerate}

  \item $x^2+y^2+z^2-2(xy+xz+yz)$, or

  \item $(x+z)^d + yz^{d-1}$, for some integer $d\geq 1$.

 \end{enumerate}
\end{theorem}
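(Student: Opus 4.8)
The plan is the following. Write $d=\deg F$, let $C$ be the curve $F=0$, and let $\nu\colon\P^1\to C$, $[s:t]\mapsto[X:Y:Z]$, be the normalization; here $X,Y,Z$ are binary forms of degree $d$ with $\gcd(X,Y,Z)=1$ and $F(X,Y,Z)\equiv 0$ (such a $\nu$ exists because $C$, being birational to a line, is rational). Differentiating the identity $F(X,Y,Z)=0$ in $s$ and in $t$ shows that the vector $(F_x,F_y,F_z)(X,Y,Z)$ is proportional to the vector of $2\times2$ Jacobian minors $W_x=Y_sZ_t-Y_tZ_s$, $W_y=Z_sX_t-Z_tX_s$, $W_z=X_sY_t-X_tY_s$ (both represent the tangent line to $C$), so $\varphi_F\circ\nu$ is the map $[s:t]\mapsto[XW_x:YW_y:ZW_z]$. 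Its image lies on the line $\{a+b+c=0\}$ by Euler's relation, and since $XW_x+YW_y+ZW_z\equiv0$ while each entry has degree $3d-2$, the hypothesis that $\varphi_F$ maps $C$ birationally onto a line becomes $\deg\gcd(XW_x,YW_y,ZW_z)=3d-3$.

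Next I would compute this gcd point by point. At a point $p\in\P^1$ with $\nu(p)$ off the coordinate lines, $C$ is smooth there by hypothesis, so its tangent line is defined (hence $W_x,W_y,W_z$ do not simultaneously vanish at $p$) and $X,Y,Z$ are all nonzero at $p$; such $p$ contribute nothing. At a point $p$ with $\nu(p)$ on a coordinate line, a local computation---using identities like $sW_x=d(YZ_t-ZY_t)$---shows that the order of the gcd at $p$ equals $\operatorname{ord}_pX+\operatorname{ord}_pY+\operatorname{ord}_pZ-1$. Summing over $p$ and using $\sum_p\operatorname{ord}_pX=\sum_p\operatorname{ord}_pY=\sum_p\operatorname{ord}_pZ=d$, the degree of the gcd equals $3d-\#\{p:\nu(p)\text{ lies on a coordinate line}\}$, so the hypothesis forces exactly three such points $p_1,p_2,p_3$. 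Hence $X,Y,Z$ vanish only at $p_1,p_2,p_3$, and after an automorphism of $\P^1$ taking these to $0,\infty,1$ and a scaling of $x,y,z$ to clear constants,
\[
  X=s^{a_1}t^{a_2}(s-t)^{a_3},\qquad Y=s^{b_1}t^{b_2}(s-t)^{b_3},\qquad Z=s^{c_1}t^{c_2}(s-t)^{c_3},
\]
with nonnegative exponents, all three row sums equal to $d$, each column of the exponent matrix $M$ containing a zero (from $\gcd(X,Y,Z)=1$) and at most two nonzero entries (no three coordinate lines being concurrent).

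It remains to determine which matrices $M$ occur. The available equivalences---permuting $p_1,p_2,p_3$ by a M\"obius map, permuting $x,y,z$, multiplying by a monomial, and invertible monomial transformations of $\P^2$ (which act by $GL_2(\Z)$ on the $2\times2$ matrix of exponents of $s/t$ and $(s-t)/t$ in $X/Z$ and $Y/Z$)---all act on $M$, and I would use them to reduce $M$ to a normal form. Two facts must then be fed in: since $\nu$ is a normalization, the monomial parametrization $[s^{a_1}t^{a_2}(s-t)^{a_3}:\cdots]$ is birational onto its image; and since $C$ is smooth off the coordinate lines, that parametrization is an immersion and is injective away from $p_1,p_2,p_3$. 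When each column of $M$ has a single nonzero entry, $M$ is $d$ times a permutation matrix and $X,Y,Z$ are pure $d$-th powers of $s,t,s-t$; injectivity off $\{p_1,p_2,p_3\}$ then fails as soon as $d\ge3$ (two distinct $d$-th roots of unity produce a node off the coordinate lines), leaving $d=1$ (whence $F\sim x+y+z$) and $d=2$ (whence $F\sim x^2+y^2+z^2-2(xy+xz+yz)$, case~(1)). In every remaining column pattern the same injectivity-and-immersion analysis, combined with the monomial-transformation freedom---which can blow up coordinate points, so that the incidence pattern of $C$ with the coordinate lines is \emph{not} an invariant---is expected to force $F$ to be equivalent to $(x+z)^d+yz^{d-1}$ for some $d\ge1$, i.e.\ case~(2) (with $x+y+z$ arising as its $d=1$ member). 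The main obstacle is exactly this last step: showing that birationality of $\nu$ together with smoothness of $C$ off the coordinate lines pins $M$ down in each of the remaining column patterns; the other ingredients are essentially bookkeeping.
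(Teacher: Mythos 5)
Your first half---getting from the hypothesis to ``exactly three distinct linear forms divide $XYZ$''---is sound and is a genuinely different computation from the paper's. The paper extracts this from the syzygy module of $(fF_x(\gamma),gF_y(\gamma),hF_z(\gamma))$, finding that the second syzygy reduces to a Koszul syzygy after clearing a common factor of degree $3d+1-k$. You instead compute $\deg\gcd(XW_x,YW_y,ZW_z)$ pointwise using the Wronskian identities $sW_x=d(YZ_t-ZY_t)$, etc.\ (your local-order claim $\operatorname{ord}_p\gcd=\operatorname{ord}_pX+\operatorname{ord}_pY+\operatorname{ord}_pZ-1$ does check out, including the case where two of $X,Y,Z$ vanish at $p$), and you work with the degree-$(3d-2)$ Wronskian vector rather than the degree-$d^2$ gradient vector, which avoids dragging along the extraneous degree-$(d-1)(d-2)$ proportionality factor. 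Both routes give $k=3$.

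The second half, however, has a genuine gap that you yourself flag: you never show how ``injectivity and immersion off $\{p_1,p_2,p_3\}$'' actually constrains the exponent matrix $M$. Saying that monomial transformations act by $GL_2(\Z)$ and can change the incidence pattern is true, but by itself it only organizes the cases---it does not eliminate any. The paper's mechanism for this step is precisely a quantitative version of your ``injectivity + immersion'' condition: since $\gamma$ is rational, $p_a(C)=\binom{d-1}{2}$ must equal the sum of the $\delta$-invariants at the (coordinate-point) singularities, each of which is computed for a binomial germ $x^a-y^b$ via Lemma~\ref{L:binomial_germ} to be $\tfrac12\bigl((a-1)(b-1)+\gcd(a,b)-1\bigr)$. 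For a type-I parameterization this yields a Diophantine equation~\eqref{Eq:delta-invariants}, which Proposition~\ref{P:solutions} solves by maximizing a quadratic form over a polytope; the other column patterns are first reduced to type I by explicit quadratic Cremona transformations. None of this appears in your proposal. Your one worked case (all columns of $M$ monomial, giving $[s^d:t^d:\ell^d]$) is correct but is the easiest one; to close the argument you would need either the genus/$\delta$-invariant formula or some equivalent quantitative tool for the binomial germs at the coordinate points, and that tool is the crux of the theorem.
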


Example~\ref{Ex:trapezoid} with $a=0$ and $b=1$ shows that the 
second class of forms define toric polar Cremona
transformations, and the following example shows that the first class also does.


\begin{example}\label{Ex:relaxation}
 The form $F$ of Theorem~\ref{T:not_contracted}(1) has the toric polar linear system 
 \begin{eqnarray}
   T(F)& =& \langle x^2-xy-xz,\ y^2-xy-yz,\ z^2-xz-yz\rangle \nonumber\\
   &=&\langle \Blue{(x{-}y{-}z)}\Brown{(y{-}x{-}z)},\ 
 \label{eq:strange_equality}
             \Blue{(x{-}y{-}z)}\Purple{(z{-}x{-}y)},\ 
             \Purple{(y{-}x{-}z)}\Brown{(z{-}x{-}y)}\rangle\ , 
 \end{eqnarray}
 which defines a quadratic Cremona transformation with base points
\[
   \bigl\{ [1:0:1],\ [0:1:1],\  [1:1:0]\bigr\}\,.
\]
 To help see the equality~\eqref{eq:strange_equality}, note that
 $-xF_x-yF_y+zF_z= \Blue{(x-y-z)}\Brown{(y-x-z)}$.

 This example shows that the algebraic relaxation (seeking polynomials $F$ with arbitrary
 coefficients whose toric
 polar linear system is birational) of the original problem from geometric modeling 
 has solutions which do not come from geometric modeling, as the coefficients of $F$
 cannot simultaneously be made positive. \eqed 
\end{example}

In Section~\ref{S:contracted}, we study the possible contracted factors of $F$
and prove the following lemma.

\begin{lemma}\label{L:contr}
  Suppose that $G$ is a contracted factor of $F$.
  Then, up to a permutation of variables, $G=x^a+\alpha y^bz^{a-b}$ with $a$, $b$
  coprime and $\alpha\neq 0$.
\end{lemma}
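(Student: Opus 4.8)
The plan is to use the Restriction Lemma~\ref{L:restriction} to reduce the problem to a single irreducible curve $G=0$ whose toric polar linear system $T(G)$ contracts it, and then to extract strong combinatorial constraints from the fact that contraction means the three forms $xG_x$, $yG_y$, $zG_z$ are pairwise proportional modulo $G$. First I would observe that, by Lemma~\ref{L:restriction}, the restrictions of $T(F)$ and $T(G)$ to $G=0$ agree, so $G$ is contracted by $T(G)$ itself; thus it suffices to classify the irreducible forms $G$ contracted by their own toric polar system. Contraction of the (irreducible, hence one-dimensional) curve $G=0$ means that the image of $G=0$ under $\varphi_G$ is a point, i.e. there are constants $\lambda, \mu$ (not all zero) with $xG_x \equiv \lambda\, zG_z$ and $yG_y \equiv \mu\, zG_z$ modulo $G$. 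Equivalently, the three binomials $xG_x - \lambda zG_z$ and $yG_y - \mu zG_z$ are divisible by $G$; since they have degree $\deg G$, each is a scalar multiple of $G$ (or zero). This gives a system of linear relations among the toric derivatives of $G$.

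Next I would exploit the monomial structure. Writing $G = \sum_{\ba} c_\ba\, x^{a_1}y^{a_2}z^{a_3}$ (degree $d$), the operator $x\partial_x$ multiplies the monomial $t^\ba$ by $a_1$, and similarly for the others. So the relation $xG_x = \lambda\, zG_z + \nu_1 G$ reads, coefficient by coefficient, $a_1 c_\ba = \lambda a_3 c_\ba + \nu_1 c_\ba$ for every $\ba$ with $c_\ba \neq 0$; likewise $a_2 = \mu a_3 + \nu_2$ on the support. Hence every exponent vector $\ba$ in the support of $G$ satisfies two affine-linear equations, so the Newton polytope of $G$ lies on an affine line in $\Z^3$. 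Combined with homogeneity ($a_1+a_2+a_3 = d$), this forces the support of $G$ to be contained in an arithmetic progression $\{\ba_0 + k\bv : k \in \Z\}$ for a single primitive direction $\bv$ with $|\bv| = 0$. Because $G$ is irreducible and not a monomial, its support must contain at least two points, and an irreducibility argument (a polynomial supported on $\geq 3$ collinear lattice points in arithmetic progression factors through a univariate polynomial in one monomial, hence is reducible unless there are exactly two support points) pins the support down to exactly the two endpoints. Writing these two monomials out and normalizing by multiplying by a Laurent monomial (allowed by Lemma~\ref{L:monomial}) and scaling variables (allowed by the linearity of differentiation) puts $G$ in the form $x^a + \alpha y^b z^{a-b}$; here one of the three variables plays the role of the "pure power" variable, which accounts for the phrase "up to a permutation of variables." Irreducibility of this binomial is exactly the condition $\gcd(a,b)=1$, and $\alpha \neq 0$ since $G$ genuinely has two terms.

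The main obstacle I anticipate is the irreducibility bookkeeping in the middle step: carefully ruling out the case where the support has three or more collinear lattice points, and making sure the normalization by a permutation of variables and a monomial multiplication really does land on $x^a + \alpha y^b z^{a-b}$ with the stated coprimality — in particular checking that the primitive direction $\bv$ can be taken, after permuting coordinates, to point from a vertex on a coordinate axis to the opposite face, so that the two endpoints are $(a,0,0)$ and $(0,b,a-b)$ up to a shift by a lattice vector that is absorbed into the monomial multiplication. One must also handle the degenerate possibilities $\lambda = 0$ or $\mu = 0$ (some toric derivative of $G$ vanishes identically modulo $G$), which correspond to $\bv$ being parallel to a coordinate axis and lead to the same conclusion with a smaller support. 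A clean way to organize all this is to pass first to the torus via a monomial substitution so that $G$ becomes $1 + (\text{monomial})$, i.e. to recognize that a contracted irreducible curve must be, up to equivalence, the closure in $\P^2$ of a subtorus coset of codimension one — which is precisely a binomial curve — and then translate the coprimality of the exponents into irreducibility.
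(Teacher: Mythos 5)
Your proposal is correct and follows essentially the same route as the paper's proof: reduce to $G$ alone via Lemma~\ref{L:restriction}, deduce from contraction that the support of $G$ lies on a lattice line, and then invoke irreducibility to show the support consists of exactly two points, i.e.\ $G$ is a binomial, which after permuting variables has the stated normal form. The paper packages the contraction condition as a single linear relation $q_1 xG_x + q_2 yG_y + q_3 zG_z = 0$ (equivalent to your two congruences plus the Euler identity, and a bit cleaner since your parametrization by $\lambda,\mu$ tacitly assumes $zG_z\not\equiv 0$ modulo $G$) and concludes via weighted homogeneity of $G(x,y,1)$ rather than your arithmetic-progression factoring argument, but the substance is the same.
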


We now present a proof of Theorem~\ref{T:Classification},
following the three cases of~\eqref{Eq:Trilemma}.

%
\subsubsection{$F$ has no contracted factors}
%
In this case, Corollary~\ref{Co:non_contr} implies that $F$ has a single irreducible
factor.
Since this factor is not contracted  it is equivalent 
to one of the forms described in Theorem~\ref{T:not_contracted}.
Since these forms define toric polar Cremona transformations, Lemma~\ref{L:power} implies
that any power of such a form defines a toric polar Cremona transformation.
In particular, $F$ defines a toric polar Cremona transformation.
This establishes part (2) of Theorem~\ref{T:Classification}(2) when
$a=0$ and also part (3).

%
\subsubsection{$F$ has only contracted factors}
%
We outline the proof in this case, which is carried out in
Section~\ref{S:contracted}. 
Suppose first that $F$ defines a toric polar Cremona transformation.
By Lemma~\ref{L:contr}, every contracted factor is a binomial.
We first show that any two contracted factors of $F$ are simultaneously equivalent to 
\[
   (z+x)\qquad\mbox{and}\qquad(z+y)\,,
\]
and in particular they meet outside the coordinate lines.

Suppose that all factors of $F$ are contracted, then we show that $F$ has at least two 
irreducible factors.
We next show that if $F$ has three or more factors, then we may assume that they intersect
transversally at $[1:1:-1]$.   
Hence $F=0$ has an ordinary singularity of multiplicity at least $3$, which contradicts
the last part of Theorem \ref{Th:singularities}. 
Therefore  $F$ is equivalent to
 \begin{equation}\label{F:two_factors}
   (x+z)^a(y+z)^b\,,
 \end{equation}
with $a,b>0$.
By Example~\ref{Ex:tensor}, any such form defines a
toric polar Cremona transformation,
which completes the proof of Theorem~\ref{T:Classification}(1).

%
\subsubsection{$F$ has both contracted and non contracted factors}
%
By Corollary~\ref{Co:non_contr}, $F$ has a unique
non contracted factor.
It also has a unique contracted factor.
Indeed, any two contracted factors meet outside the coordinate
lines, and so the curve $F=0$  is singular outside the coordinate lines.
Then Theorem~\ref{Th:singularities} implies that all factors of $F$ are contracted, 
a contradiction. 
All that remains is to examine the different possibilities for
the factors of $F$.
We show that the non contracted factor cannot be equivalent to $x^2+y^2+z^2-2(xy+xz+yz)$.
We then show that if the non contracted factor is equivalent to $(x+z)^d + yz^{d-1}$, then
(after putting it into this form) the contracted factor is $x+z$.
Example~\ref{Ex:trapezoid} shows that all possibilities
\[
   (x+z)^a \bigl((x+z)^d + yz^{d-1} \bigr)^b
\]
with $a\geq 0$ and $b>0$ define toric polar Cremona transformations, 
which completes the proof of Theorem~\ref{T:Classification}.
These claims about the non contracted factors are proven in the following two lemmas.

\begin{lemma}
  If $F$ has a non contracted factor equivalent to 
  $x^2+y^2+z^2-2(xy+xz+yz)$, then it has no other factors.
\end{lemma}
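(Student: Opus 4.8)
The plan is to suppose, for contradiction, that $F$ has a non contracted factor $Q$ equivalent to $x^2+y^2+z^2-2(xy+xz+yz)$ together with at least one other irreducible factor $G$. By Corollary~\ref{Co:non_contr}, $G$ must then be contracted, so by Lemma~\ref{L:contr} we may assume $G = x^a + \alpha y^b z^{a-b}$ for coprime $a,b$ and $\alpha\neq 0$. The strategy is to extract enough information about $G$ from the restriction Lemma~\ref{L:restriction} and from Theorem~\ref{Th:singularities}, and then to derive a contradiction from the intersection behaviour of $G$ with $Q$.

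First I would recall from Example~\ref{Ex:relaxation} that, after putting $Q$ into the normal form $x^2+y^2+z^2-2(xy+xz+yz)$, the toric polar map $\varphi_Q$ is the quadratic Cremona transformation with base points $[1:0:1]$, $[0:1:1]$, $[1:1:0]$, and the three contracted lines are $x-y-z=0$, $y-x-z=0$, $z-x-y=0$. By Lemma~\ref{L:restriction}, the restriction of $T(F)$ to the conic $Q=0$ agrees with the restriction of $T(Q)$, so $\varphi_F$ maps $Q=0$ onto a line in the same way $\varphi_Q$ does; in particular the conic $Q=0$ is \emph{not} contracted by $\varphi_F$, consistent with $Q$ being the unique non contracted factor. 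Next I would locate the contracted factor $G$ relative to the coordinate lines and to $Q$. Since $G$ is contracted while $Q$ is not, the curve $F=0$ has a singular point wherever $G=0$ meets $Q=0$. If such an intersection point lies off the three coordinate lines $xyz=0$, then Theorem~\ref{Th:singularities} forces \emph{all} factors of $F$ to be contracted — contradicting that $Q$ is not contracted. Hence every point of $G\cap Q$ must lie on $\{xyz=0\}$.

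The heart of the argument is then to show this last condition is impossible. The conic $Q=0$ is smooth (its discriminant is nonzero) and meets each coordinate line $x=0$, $y=0$, $z=0$ in a single point with multiplicity two, namely the three base points $[0:1:1]$, $[1:0:1]$, $[1:1:0]$ of $\varphi_Q$ (these are exactly where the coordinate lines are tangent to $Q$). So the only candidate intersection points of $G=0$ with $Q=0$ lying on the coordinate lines are among these three points. By Bézout, $G\cap Q$ consists of $2a$ points counted with multiplicity, all of which must therefore be concentrated at $\{[0:1:1],[1:0:1],[1:1:0]\}$. I would then test each of these against the binomial form $G = x^a+\alpha y^bz^{a-b}$ (and its permutations): for instance at $[1:1:0]$ one needs $a-b=0$ or the point to be absent, while $[0:1:1]$ forces $a=0$ unless $\alpha$ is tuned, and so on. Running through the coprimality constraint $\gcd(a,b)=1$ (so $a=1$ in the degenerate cases), one finds $G$ must be a coordinate line or a line through two of these base points — but a coordinate line would make $F$ not coprime to $xyz$ (and is excluded from being a proper factor in the reduced curve), and a non-coordinate line through two base points is not of the binomial shape of Lemma~\ref{L:contr}. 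In every case we reach a contradiction, so $F$ has no factor other than $Q$.

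The main obstacle I anticipate is the bookkeeping in the last step: verifying that no binomial $x^a+\alpha y^bz^{a-b}$ with $\gcd(a,b)=1$ can have its entire intersection with the conic $Q=0$ supported on the three tangency points, across all permutations of the variables and all placements of the exponent pattern. This requires computing the local intersection multiplicities of $G$ with $Q$ at $[0:1:1]$, $[1:0:1]$, $[1:1:0]$ — a finite but slightly delicate case analysis, since the multiplicities depend on whether the binomial passes through a given point and, if so, on the order of contact. Once those multiplicities are pinned down and summed to $2a$, the coprimality hypothesis collapses the possibilities to the handful of degenerate lines, each of which is ruled out either by Lemma~\ref{L:contr} or by coprimality to $xyz$, completing the proof.
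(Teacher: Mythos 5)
Your strategy matches the paper's in spirit: both reduce to the claim that the contracted binomial factor $G$ can meet the conic $Q=0$ only on the coordinate lines (via Theorem~\ref{Th:singularities}), and both then try to show this forces $G$ to degenerate. Your geometric observations are correct — $Q$ is a smooth conic, tangent to each coordinate line at one of the points $[0:1:1]$, $[1:0:1]$, $[1:1:0]$, and Bézout gives $2\deg G$ intersection points that must all be concentrated there. However, you stop exactly at the step that carries the load: ``running through the coprimality constraint\dots one finds $G$ must be a coordinate line or a line through two of these base points.'' This is asserted, not proved, and your own tentative conclusion is not quite the right one (the correct outcome is that the exponents must all vanish, i.e.\ $G$ would be a unit, which is the contradiction).

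The paper closes this gap with a cleaner device that you should note. Since $Q$ is rational and smooth, parameterize it explicitly by $[x:y:z]=[(s+t)^2:s^2:t^2]$ and substitute into $G=x^A+\alpha y^a z^{A-a}$ to obtain the binary form
\[
  (s+t)^{2A} \;+\; \alpha\, s^{2a}\, t^{2A-2a}.
\]
The condition that $G\cap Q$ lie only on the coordinate lines means this binary form can have no linear factors other than $s$, $t$, $s+t$ (these correspond exactly to the three tangency points). But setting $s=1$, $t=0$, or $s+t=0$ shows the form does not vanish at any of those unless the corresponding exponent is zero; running this through the coprime pair $(A,a)$ forces $A=a=0$, contradicting that $G$ is a nontrivial factor. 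This substitution collapses your entire ``delicate case analysis'' into one line, and avoids having to track local intersection multiplicities at the three tangency points. If you want to complete your version, you would need to actually compute those multiplicities and check that no admissible coprime exponent pattern (across permutations of the variables) makes them sum to $2\deg G$; as written, the argument is incomplete.
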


\begin{proof}
 Suppose that $F$ has two factors, $Q:=x^2+y^2+z^2-2(xy+xz+yz)$, and a contracted factor
 $G$.
 Since permuting the variables does not change $Q$, Lemma~\ref{L:contr} implies that 
 $G=x^A+\alpha y^a z^{A-a}$, with $A,a\geq 0$ coprime and $\alpha$ non zero.

 By Theorem~\ref{Th:singularities}, $G$ and $Q$ can meet only on the coordinate lines.
 If we substitute the parameterization $[x:y:z]=[(s+t)^2:s^2:t^2]$ of $Q$ into
 $G$, we obtain
 \begin{equation}\label{Eq:subst}
    (s+t)^{2A}\ +\ \alpha s^{2a} t^{2A-2a}\,.
 \end{equation}
%
 The condition that $Q$ and $G$ meet only on the coordinate lines implies that the only
 factors of~\eqref{Eq:subst} are $s$, $t$, or $s+t$.
 But this implies that $A=a=0$, contradicting our assumption that $G$
 was a non trivial
 factor of $F$.
\end{proof}

\begin{lemma}
  If $F$ has the  non contracted factor $(x+z)^d + yz^{d-1}$, then its other irreducible
  factor must be $x+z$. 
\end{lemma}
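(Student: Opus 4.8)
The plan is to imitate the proof of the previous lemma: parametrize the non contracted factor $N := (x+z)^d+yz^{d-1}$ and substitute this parametrization into the other irreducible factor. By what has already been shown, $F$ has a unique contracted factor $G$ and a unique non contracted factor, which we have put into the form $N$, so $F=G^aN^b$ with $a,b\geq 1$. Since $N$ is not contracted, Theorem~\ref{Th:singularities} implies that the reduced curve $F=0$ has no singular point off the coordinate lines; in particular the curves $G=0$ and $N=0$ meet only on the coordinate lines $xyz=0$.

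The curve $N=0$ is rational: it has a point of multiplicity $d-1$ at $[0:1:0]$, and projection from that point gives the birational parametrization
\[
  \nu\colon\ \P^1\ \longrightarrow\ N\,,\qquad
  [\lambda:\mu]\ \longmapsto\ [\,\lambda^{d-1}\mu\ :\ -(\lambda+\mu)^d\ :\ \lambda^d\,]\,.
\]
A direct check shows that $\nu$ has no base points and that the preimage under $\nu$ of the union of the three coordinate lines consists of the three points $[0:1]$, $[1:0]$, $[1:-1]$ of $\P^1$, cut out by $\lambda$, $\mu$, and $\lambda+\mu$ respectively. The binary form $G(\nu(\lambda,\mu))$ is nonzero because $G$ is irreducible and not proportional to $N$, and by the previous paragraph all of its zeros lie among these three points, so
\[
   G(\nu(\lambda,\mu))\ =\ c\,\lambda^i\mu^j(\lambda+\mu)^k
   \qquad\text{for some }c\neq 0\text{ and }i,j,k\geq 0\text{ with }i+j+k=d\cdot\deg G\,.
\]

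By Lemma~\ref{L:contr}, up to a permutation of the variables $G=u^p+\alpha v^qw^{p-q}$ with $\gcd(p,q)=1$, $0\leq q\leq p$ and $\alpha\neq 0$; since $G$ is irreducible, either $p=1$ (so $G$ is a linear form) or $p\geq 2$ and $1\leq q\leq p-1$. The key point is that under $\nu$ the variables $x$ and $z$ become the monomials $\lambda^{d-1}\mu$ and $\lambda^d$, while $y$ becomes the pure power $-(\lambda+\mu)^d$. I would then run through the cases according to which of $x,y,z$ sits in each slot of the binomial. If $y$ occurs in $G$ with positive exponent, one expands the corresponding power of $\lambda+\mu$ and compares its support with $c\,\lambda^i\mu^j(\lambda+\mu)^k$, using the values at $\lambda=0$, $\mu=0$ and $\mu=-\lambda$ to detect divisibility by $\lambda$, $\mu$ and $\lambda+\mu$, together with a count of monomials; this eliminates every case with $p\geq 2$ and, among the linear forms, all but $G=x+\alpha z$. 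Finally $G(\nu(\lambda,\mu))=\lambda^{d-1}\mu+\alpha\lambda^d=\lambda^{d-1}(\mu+\alpha\lambda)$ has the required shape only for $\alpha=1$, giving $G=x+z$. (When $d=1$ the form $N=x+y+z$ is symmetric, the edge lines $x+y$, $y+z$, $x+z$ all survive, and they are equivalent; when $d\geq 2$ only $x+z$ remains.)

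The routine but delicate part is this case analysis. One must separate the cases in which $y$ fills the $u$-slot of the binomial — so $G(\nu)$ is a power of $\lambda+\mu$ plus a single monomial — from those in which $y$ fills the $v$- or $w$-slot — so $G(\nu)$ is a monomial plus a power of $\lambda+\mu$ times a monomial — and one must deal carefully with the exceptional value $d=1$, for which $N$ is symmetric. No new ideas beyond those in the preceding lemma are needed.
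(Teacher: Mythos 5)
Your approach is sound and it is essentially the same as the paper's, with one cosmetic difference: the paper solves $Q=0$ for $y$, writing $y=-(x+z)^d/z^{d-1}$, substitutes into the Laurent binomial form $G=\alpha+(-1)^a x^A y^a z^{-A-a}$, clears denominators, and analyzes which factors the resulting bivariate polynomial can have given that it may only vanish on $x$, $z$, $x+z$; you instead fix a $\P^1$-parameterization $\nu$ of $N=0$ (projection from the $(d-1)$-fold point $[0:1:0]$) and ask which binary forms $c\,\lambda^i\mu^j(\lambda+\mu)^k$ can arise as $G\circ\nu$. These are the same computation in different coordinates, and your parametrization and identification of the three preimage points are correct. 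Your approach has the advantage of reusing verbatim the template of the preceding lemma, where the paper parameterizes the conic; the paper's direct substitution is a bit shorter here because the projection is already a coordinate projection.

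The genuine gap is that you never carry out the case analysis that is the entire content of the proof once the setup is in place. ``I would then run through the cases \dots\ this eliminates every case with $p\geq 2$'' is a promissory note, not an argument. The analysis is not difficult, but it is not trivial either, and it has real pitfalls: for instance when $y$ sits in the pure-power slot with $p=1$ and $\alpha$ specially chosen, the form $G\circ\nu$ does acquire a factor of $\lambda$ (equivalently, in the paper's language, the polynomial $\alpha z^{A+ad}+x^A(x+z)^{ad}$ does acquire a factor of $x$ when $A=0$ and $\alpha=-1$), so the naive claim ``$x$, $z$ cannot be factors'' must be replaced by an argument that the \emph{quotient} by that factor still introduces a forbidden irreducible component when $d\geq 2$. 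To turn your proposal into a proof you need to actually expand $G\circ\nu$ in each of the (up to permutation) three slot configurations, use the evaluations at $\lambda=0$, $\mu=0$, $\mu=-\lambda$ to pin down $i,j,k$, and then compare supports to derive a contradiction unless $G=x+z$ (or one of the two equivalent choices when $d=1$, a symmetry you correctly flag and which the paper's own proof silently glosses over). As written the key step is absent.
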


\begin{proof}
 Suppose that $F$ has two factors, $Q:=(x+z)^d + yz^{d-1}$ with $d\geq 1$ and a contracted
 factor $G$, which is necessarily a binomial.
 Multiplying $G$ by a monomial and scaling, we may assume it has the form
 $\alpha+(-1)^ax^Ay^az^{-A-a}$ with $A\geq 0$ and $A,a$ coprime.
 By Theorem~\ref{Th:singularities}, $G$ and $Q$ can meet only on the coordinate lines.
 We solve $Q=0$ for $y$ to obtain $y=-(x+z)^d/z^{d-1}$ and then substitute this into 
 $G$ to obtain
\[
   \alpha+(-1)^ax^A\left(-\tfrac{(x+z)^d}{z^{d-1}}\right)^a z^{-A-a}
   \ =\ \alpha + x^A(x+z)^{ad}z^{-A-ad}\,.
\]
 If we multiply this by $z^{A+ad}$ if $a\geq 0$ and by $z^A(x+z)^{-ad}$ if $a<0$ (and replace
 $a$ by $-a$), this becomes either
\[
   \alpha z^{A+ad}\ +\ x^A (x+z)^{ad}\qquad\mbox{or}\qquad
   \alpha z^A(x+z)^{ad}\ +\ x^A z^{ad}\,.
\]
 Since $G$ and $Q$ can meet only on the coordinate lines, the only possible factors of
 these polynomials are $x$, $z$, and $x+z$.
 Neither $x$ nor $z$ can be a factor as $A\neq ad$ and the coefficients are non zero,
 so $x+z$ is the only factor.
 But then we must have $a=0$, $\alpha=1$, and $A=1$,
 so that $G=1+xz^{-1}$, or, clearing the denominator, $G=x+z$.
\end{proof}

%
\section{contracted factors}\label{S:contracted}
%

We study the contracted factors of a form $F$ that defines a toric polar Cremona
transformation.
We first prove Lemma~\ref{L:contr}, that any contracted factor of $F$ is a binomial.

\begin{proof}[Proof of Lemma~$\ref{L:contr}$]
  By Lemma~\ref{L:restriction}, the restrictions of the toric polar maps of $F$ and of $G$
  to the curve $G=0$ coincide.
  Let $T(G)$ be the toric polar linear system of $G$.  
  Since it contracts $G$, $T(G)|_G=\langle xG_x, yG_y, zG_z \rangle|_G$ is
  one-dimensional, and so $T(G)$ is only two-dimensional.
  Thus there is a linear relationship among the toric derivatives of $G$,
\[
    q_1 xG_x\,+\, q_2 yG_y \,+\, q_3 zG_z \ =\ 0\,.
\]
  Writing $G=\sum_i m_i$ as a sum of terms $m_i= \alpha_i x^{a_i}y^{b_i}z^{c_i}$,
  we see that $q_1a_i+q_2b_i=q_3c_i$ for all $i$.
  Thus we may assume that $q_1,q_2,q_3\in\Z$ and they are coprime.
  Permuting variables, we may assume that the $q_j$ are non negative.
  Since $G$ is homogeneous, say of degree $d$, we have $a_i+b_i+c_i=d$, and so
\[
   (q_1+q_3)a_i + (q_2+q_3)b_i\ =\ q_3 d\,.
 \]
  Thus $G(x,y,1)$ is a weighted homogeneous polynomial of degree $q_{3}d$. 
  Since $G$ is irreducible, the only possibilities are $G(x,y,1)=x$ or
  $G(x,y,1)=y$ (neither can occur as $F$ is coprime to $xyz$), or
  $G(x,y,1)=x^a + \alpha y^b$ with $a$ and $b$ coprime, $\alpha\neq 0$ and  
  $(q_{1}+q_{3})a = (q_{2}+q_{3})b$.
  Since $G$ is irreducible, $z$ does not divide $G$ and $G(x,y,1)$ must have degree $d$. 
  Therefore after possibly interchanging $x$ and $y$ we see that
  $G$ has the form claimed.
\end{proof}

Any two contracted factors may be put into a standard form.

\begin{lemma}\label{L:two_factors}
  If $G$ and $H$ are two contracted factors of $F$, then, up to equivalence
  $GH=(x+z)(y+z)$.
\end{lemma}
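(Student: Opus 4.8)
The plan is to start from Lemma~\ref{L:contr}, which tells us that each of $G$ and $H$ is, up to a permutation of variables, a binomial of the form $x^a + \alpha y^b z^{a-b}$ with $\gcd(a,b)=1$ and $\alpha\neq 0$. The first task is to pin down the structure more precisely: for a single contracted binomial factor $G$, I would use the linear relation $q_1 x G_x + q_2 y G_y + q_3 z G_z = 0$ produced in the proof of Lemma~\ref{L:contr}, together with the fact (from Lemma~\ref{L:restriction}) that $T(G)|_G$ is one-dimensional, to describe the ``contraction direction'' of $G$ by the weight vector $(q_1,q_2,q_3)$. The key point is that an invertible monomial transformation acts on these weight vectors by the transpose inverse, and by multiplying by monomials we can normalize the exponents; so a single binomial factor can always be moved to the form $x+z$ (or a pure power, but irreducibility rules that out). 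The real content is doing this for \emph{two} factors simultaneously.

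The main step is then: given two contracted binomials $G$ and $H$, show that the pair of weight vectors $(q_1^G,q_2^G,q_3^G)$ and $(q_1^H,q_2^H,q_3^H)$, lying in $\{v\in\Z^3: |v|=0\}^{\vee}$ or rather their associated exponent data, can be simultaneously brought to the standard configuration by an element of the group generated by the equivalences (a)--(c). Concretely, I would argue that after an invertible monomial transformation $\varphi^*_{\mathcal A}$ and multiplication by monomials, $G$ becomes $x+z$; this is the single-factor normalization. Then I must understand what shapes $H$ can have \emph{given} that $G=x+z$ is already fixed — i.e., I have used up part of my freedom, and the residual symmetries fixing $x+z$ (up to scaling and monomial multiplication) are limited. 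Within that smaller group I would show $H$ must become $y+z$. The cleanest route is probably to appeal directly to the geometry: $G$ and $H$ are contracted lines-or-binomials, and by Theorem~\ref{Th:singularities} (applied after noting $F$ is coprime to $xyz$, using Lemma~\ref{L:monomial}) two distinct contracted factors cannot meet outside the coordinate lines — wait, that is backwards; they \emph{can} meet there, and the conclusion of the case analysis in Section~\ref{S:Basics} is precisely that they do meet at one point off the coordinate lines. So instead I would use that each contracted binomial, being rational and contracted to a point, together with the base-point structure of a Cremona transformation, forces the two binomials to be ``compatible.''

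More concretely, here is the route I would actually carry out. Normalize $G = x+z$ by an invertible monomial transformation and monomial multiplication, as above. Now $H = x^A + \alpha y^a z^{A-a}$ with $\gcd(A,a)=1$ (after a permutation of variables that I must check is compatible with having fixed $G$); the permutations fixing the line $x+z$ setwise are only the identity and the swap $x\leftrightarrow z$, so in fact $H$ is already in a binomial form in some pair of the three variables, and I would split into the few cases for which pair. In each case, the condition that $T(G)|_G$ and $T(H)|_H$ are each one-dimensional, combined with the requirement that $T(F)$ still defines a \emph{birational} map (so $T(F)$ is genuinely a net, i.e.\ three-dimensional), forces the exponents of $H$: if $H$ involved a variable with too high a power or the ``wrong'' variable, the toric polar system of $GH \cdot(\text{rest})$ would degenerate. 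Carrying out this bookkeeping — essentially a finite check using the weight-vector description — shows $H$ must be $y+z$ up to scaling the variable $y$, and scaling is an allowed equivalence.

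The hard part will be organizing the group-theoretic bookkeeping cleanly: making precise ``the residual symmetry group after fixing $G=x+z$'' and showing that within it $H$ has a unique normal form. The temptation is to do a large unstructured case analysis on the exponents $(A,a)$ and on which variables appear in $H$; the main obstacle is to replace that with a conceptual argument — most likely by working entirely with the rank-$2$ weight lattices attached to $G$ and $H$ (the kernels of the maps $\mathbf a \mapsto (q^G\!\cdot\!\mathbf a,\, q^H\!\cdot\!\mathbf a)$) and showing the pair of these lattices, as a configuration inside $\{v\in\Z^3:|v|=0\}$, is unique up to the $\mathrm{GL}$-action coming from invertible monomial transformations. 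Once that configuration is shown to be rigid, specializing to an explicit representative gives $GH = (x+z)(y+z)$, and one double-checks via Example~\ref{Ex:tensor} that this genuinely occurs, so the normal form is not vacuous.
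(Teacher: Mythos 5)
Your plan identifies the right ingredients --- Lemma~\ref{L:contr} to get that $G$ and $H$ are binomials, invertible monomial transformations to normalize, and Theorem~\ref{Th:singularities} as the geometric constraint --- but you stop short of the one step that makes the simultaneous normalization actually go through, and your proposed substitute for it does not work.

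The paper's proof turns Theorem~\ref{Th:singularities} into an arithmetic condition as follows. After multiplying by monomials and dehomogenizing at $z=1$, write $G=\alpha+x^Ay^a$ and $H=\beta+x^By^b$. Two coprime irreducible binomials of this shape always meet in $(\C^*)^2$, and the \emph{number} of common zeros is exactly $|Ab-Ba|$ (the normalized volume of the Newton polytope / Kushnirenko count, as in Sturmfels~\cite{Stu}). Every common zero of two distinct factors of $F$ is a singular point of $F=0$, so Theorem~\ref{Th:singularities} forces $|Ab-Ba|\le 1$; since $G,H$ are coprime it is not $0$, hence $|Ab-Ba|=1$. This unimodularity of the exponent matrix $\bigl(\begin{smallmatrix}A&a\\B&b\end{smallmatrix}\bigr)$ is precisely what lets a single invertible monomial substitution ($x\mapsto x^by^{-a}$, $y\mapsto x^{-B}y^A$) take $G,H$ to $\alpha+x$, $\beta+y$ simultaneously. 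Your plan instead first normalizes $G=x+z$, then tries to argue about ``residual symmetry'' and that otherwise ``the toric polar system would degenerate,'' but you never extract the determinant condition, and without it there is no reason the residual freedom suffices to normalize $H$ as well --- the lattice configuration you want to call rigid is rigid \emph{because} the determinant is $\pm1$, which is exactly the fact you have not established. Your weight-vector bookkeeping is a reasonable scaffolding, but the missing and essential idea is to count the $G\cap H$ points by the mixed volume and compare with the singularity bound; that replaces the entire ``finite check / residual group'' portion of your plan with one line.

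Two smaller points. First, you flag uncertainty about whether two contracted binomials meet off the coordinate lines; they must, since coprime binomials of this form have $Ab-Ba\neq 0$ and hence at least one common zero in the torus, so the concern you raise and then retract is in fact a non-issue. Second, your suggested criterion that ``$T(F)$ is a net, i.e.\ three-dimensional'' does not by itself constrain the exponents of $H$ in the way you would need; the paper never uses a dimension count on $T(F)$ at this point, and I do not see how to make that line of reasoning produce the unimodularity.
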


In particular, any two contracted factors of $F$ meet outside the coordinate lines.

\begin{proof}
  Up to a permutation of the variables, each factor is a prime binomial of the form 
\[
    x^A + \alpha y^az^{A-a}\,,
\]
 where $A,a\geq 0$ are coprime and $\alpha$ is non zero.
 
 By Theorem~\ref{Th:singularities}, $F$ has at most one singularity outside the
 coordinate lines.
 Points common to two factors of $F$ are singular, so 
 the factors $G$ and $H$ define curves that meet at most once outside
 the coordinate axes.
 To study such points, we dehomogenize and set $z=1$.
 Multiplying $G$ and $H$ by monomials, we may suppose that they have the form
 \begin{equation}\label{Eq:GandH}
  G\ =\ \alpha\ +\ x^Ay^a\qquad\mbox{and}\qquad
  H\ =\ \beta\ + \ x^By^b\,.
 \end{equation}
 Since these are irreducible binomials, $1=\gcd\{|A|,|a|\}=\gcd\{|B|,|b|\}$,
 and since they are coprime $Ab-Ba\neq 0$.

 The points common to the two components are the solutions to $G=H=0$.
 The number of solutions to such a zero-dimensional binomial system is
 $|Ab-Ba|$~\cite[\S~3.2]{Stu}.
 Since there can be at most one such point,
 $|Ab-Ba|=1$.
 Interchanging the roles of $(A,a)$ and $(B,b)$ if necessary, we may assume that 
 $Ab-Ba=1$.
 Under the invertible substitution $x=x^by^{-a}$ and $y=x^{-B}y^A$,~\eqref{Eq:GandH}
 becomes $\alpha+x$ and $\beta+y$.
 Scaling $x$ and $y$ and rehomogenizing, 
 we may assume that the binomials are $x+z$ and $y+z$.
\end{proof}

\begin{lemma}\label{L:irreducible}
  If $F$ has only a single irreducible factor, then that factor is not contracted.
\end{lemma}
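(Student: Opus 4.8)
The plan is to argue by contradiction: suppose $F = G^n$ for a single irreducible form $G$, and suppose $G$ is contracted. Then $F$ defines a toric polar Cremona transformation, and by Lemma~\ref{L:power} so does $G$ itself, so we may as well take $n=1$ and assume $F = G$ is irreducible and contracted. By Lemma~\ref{L:contr}, $G$ is (up to a permutation of the variables) a binomial $x^a + \alpha y^b z^{a-b}$ with $a,b$ coprime and $\alpha \neq 0$. The goal is to show that the toric polar map $\varphi_G$ of such a binomial cannot be birational.

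The key computation is to write down $T(G)$ explicitly for $G = x^a + \alpha y^b z^{a-b}$ and see that it is too small. We have $x G_x = a x^a$, $y G_y = \alpha b\, y^b z^{a-b}$, and $z G_z = \alpha(a-b)\, y^b z^{a-b}$. Thus the three toric derivatives are scalar multiples of only \emph{two} distinct monomials, namely $x^a$ and $y^b z^{a-b}$, and in fact $z G_z = \frac{a-b}{b} \cdot y G_y$ (when $b \neq 0$; if $b = 0$ then $a = 1$ since $\gcd(a,b)=1$, forcing $G = x + \alpha z$, which is excluded since $F$ must be coprime to $xyz$ — and anyway a single linear form would not be coprime to a coordinate line after a permutation unless it is $x+\alpha z$-type, handled the same way). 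So $T(G) = \langle x^a,\ y^b z^{a-b}\rangle$, a linear system of dimension at most $2$, and the map $\varphi_G$ factors through $\P^1$. Concretely $\varphi_G([x:y:z]) = [a x^a : \alpha b\, y^b z^{a-b} : \alpha(a-b)\, y^b z^{a-b}]$, whose image lies on the fixed line $\{bY = (a-b)Z\}$ (or is a single point if $a = b$, but $a=b$ with $\gcd(a,b)=1$ forces $a=b=1$ and $G$ linear, again excluded). A rational map $\P^2 \rat \P^2$ whose image is contained in a line cannot be birational, since a birational self-map of $\P^2$ is dominant. This is the desired contradiction.

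The main obstacle, such as it is, is bookkeeping around the degenerate exponents: one must check that after the permutation of variables supplied by Lemma~\ref{L:contr} the hypothesis that $F$ is coprime to $xyz$ genuinely rules out the cases $G = x$, $G = y$, $G = z$, and more generally that $b = 0$ or $a = b$ only occurs when $G$ is equivalent to a coordinate variable or to $x + \alpha z$, none of which can be an irreducible factor of an $F$ coprime to $xyz$ — note here that "coprime to $xyz$" for a single irreducible $F$ simply means $F \notin \{x,y,z\}$, so one must instead use that $F$ defines a toric polar Cremona transformation to exclude $F = x+\alpha z$ directly, which is immediate since for $F$ linear $T(F)$ already has dimension $\le 2$. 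Once these edge cases are dispatched, the core argument is the one-line observation that $z G_z$ is a constant multiple of $y G_y$, collapsing $T(G)$ to dimension $\le 2$. I would present the proof by first recording, via Lemma~\ref{L:power}, that it suffices to treat the irreducible case, then invoking Lemma~\ref{L:contr} to reduce to a binomial, and finally carrying out the three-line derivative computation to show the image of $\varphi_G$ is contained in a line.
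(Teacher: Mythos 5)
Your argument is correct. The paper's own proof is slightly more economical: rather than invoking the full conclusion of Lemma~\ref{L:contr} (that $G$ is equivalent to a binomial) and then computing the toric derivatives explicitly, it re-uses the intermediate observation inside the proof of Lemma~\ref{L:contr} that the contraction hypothesis alone forces $T(G)|_G$ to be one-dimensional, and hence $T(G)$ to be two-dimensional, since the only elements of $T(G)$ vanishing identically on $G=0$ are the scalar multiples of $G$ itself (via the Euler relation). Both routes arrive at the same punch line --- the three toric derivatives of $G$ are linearly dependent, so the image of $\varphi_G$ lies in a line and the map cannot be dominant, let alone birational --- and your explicit binomial computation simply verifies concretely the dimension drop that the paper obtains abstractly. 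One small remark: the edge-case bookkeeping you carry out (the $b=0$ and $a=b$ cases, and the discussion of linear $G$) is unnecessary for the core argument, since your observation that $yG_y$ and $zG_z$ are both scalar multiples of the single monomial $y^b z^{a-b}$ already bounds $\dim T(G)\leq 2$ in every case; whether the image of $\varphi_G$ is a point or a line, it is a proper subvariety of $\P^2$, which is all that is needed.
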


In particular a form with all factors contracted must have at least two factors. 

\begin{proof}
  Let $G$ be the irreducible factor of $F$, then $F=G^a$ for some $a>1$.
  By Lemma~\ref{L:power} the toric polar map $\varphi_F$ of $F$ coincides with the toric
  polar map $\varphi_G$ of $G$. 
  If $G=0$ is contracted, then, as in the proof of  Lemma~\ref{L:contr}, 
  $T(G)|_G$ is one-dimensional and thus $T(G)$ is only two-dimensional
  so that $xG_x$, $yG_y$ and $zG_z$ are dependent. 
  But then $\varphi_G$, and hence $\varphi_F$, cannot be birational.
\end{proof}

We classify forms $F$ defining a toric polar Cremona
transformation with all factors contracted.

\begin{theorem}\label{T:all_contr}
  If all factors of $F$ are contracted, then $F$ is equivalent to
  $(x+z)^a(y+z)^b$, for some $a,b>0$.
\end{theorem}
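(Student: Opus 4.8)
The plan is to combine the structural lemmas of this section with the singularity bound of Theorem~\ref{Th:singularities}. We start from a form $F$ all of whose irreducible factors are contracted, and we may assume $F$ is coprime to $xyz$ (any monomial factor can be stripped by Lemma~\ref{L:monomial}). By Lemma~\ref{L:irreducible}, $F$ has at least two distinct irreducible factors; by Lemma~\ref{L:contr} each is a prime binomial; and by Lemma~\ref{L:two_factors} any two of them can simultaneously be put into the standard form $(x+z)$ and $(y+z)$. So the heart of the argument is to rule out a \emph{third} distinct irreducible factor.

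First I would record the easy case: if $F$ has exactly two distinct irreducible factors, then after applying the equivalence of Lemma~\ref{L:two_factors} we may assume they are $x+z$ and $y+z$, so $F = (x+z)^a(y+z)^b$ with $a,b>0$, and Lemma~\ref{L:power} (together with Example~\ref{Ex:tensor}) confirms this defines a toric polar Cremona transformation. For the main case, suppose $F$ has three distinct irreducible factors $G_1, G_2, G_3$. Using Lemma~\ref{L:two_factors} on the pair $G_1, G_2$, I would normalize so that $G_1 = x+z$ and $G_2 = y+z$; these meet only at the point $p:=[1:1:-1]$, which lies off the coordinate lines. Now $G_3$ is a third prime binomial. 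The key point is that by Theorem~\ref{Th:singularities}, $F$ has at most one singular point outside the coordinate lines, so $G_3$ must pass through $p=[1:1:-1]$ as well — otherwise $G_3$ would create a second singularity where it meets $G_1$ (any two distinct components of the curve $F=0$ meet somewhere in $\P^2$, and I would need to check this meeting point can be taken off the coordinate lines, or handle the coordinate-line case separately using that $F$ is coprime to $xyz$).

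Once all three factors pass through $p$, I would argue that they meet transversally there: each $G_i$ defines a curve smooth at $p$ (a prime binomial $x^A+\alpha y^a z^{A-a}$ is smooth away from the coordinate lines, since its gradient cannot vanish there — this is the same computation underlying the smoothness claims used elsewhere), and if two of the branches were tangent at $p$ they would meet there with multiplicity $\geq 2$, again violating that $|Ab - Ba| = 1$ is forced by Theorem~\ref{Th:singularities} (as in the proof of Lemma~\ref{L:two_factors}) — so the three branches through $p$ have three distinct tangent directions. Hence $p$ is an ordinary singular point of $F=0$ of multiplicity $\geq 3$, with at least three distinct factors of $F$ passing through it, directly contradicting the last sentence of Theorem~\ref{Th:singularities}. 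Therefore $F$ has at most two distinct irreducible factors, and we are in the case already handled.

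The main obstacle I anticipate is the bookkeeping in showing that a hypothetical third factor is genuinely \emph{forced} through the single off-coordinate singular point $p$, and that the three branches there are pairwise transverse: I need to be careful that the pairwise intersections of $G_1,G_2,G_3$ don't escape to the coordinate lines (where Theorem~\ref{Th:singularities} gives no information), and that a binomial component cannot be tangent to another at $p$. Both of these come down to the intersection-number computation $|Ab-Ba|$ for pairs of binomials, combined with the observation that $F$ is coprime to $xyz$, so this should be tractable but requires care. Since much of this overlaps with the outline already given in Section~\ref{S:Basics} under the heading ``$F$ has only contracted factors,'' the cleanest write-up may simply assemble that outline into a formal proof, citing Lemmas~\ref{L:contr}, \ref{L:two_factors}, \ref{L:irreducible} and Theorem~\ref{Th:singularities} at the appropriate points.
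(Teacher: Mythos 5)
Your proposal is correct and follows essentially the same approach as the paper: reduce to the pair $(x+z),(y+z)$ via Lemma~\ref{L:two_factors}, show a hypothetical third contracted factor would force an ordinary singular point at $[1:1:-1]$ with at least three distinct factors through it, and invoke the last part of Theorem~\ref{Th:singularities}. The concern you flag about pairwise intersections escaping to the coordinate lines is already resolved by the remark following Lemma~\ref{L:two_factors} (any two contracted factors meet outside the coordinate lines), and your abstract transversality argument via the $|Ab-Ba|=1$ count is a valid variant of the paper's explicit computation that the only candidate factors are $x+z$, $y+z$, $z^2-xy$, $y-x$, all of which have distinct tangents at $[1:1:-1]$.
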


\begin{proof}
 Suppose that all factors of $F$ are contracted.
 By Lemma~\ref{L:two_factors}, we may assume that two of the irreducible factors of $F$
 are $x+z$ and $y+z$.
 We only need to show that there are no further contracted factors of $F$.
 Suppose there is another contracted factor.
 After multiplying by a monomial, this will have the form
\[
   \gamma\ +\ x^Cy^cz^{-C-c}\,,
\]
 with $\gamma\in\C^*$ and $C>1$.
 By Theorem~\ref{Th:singularities}, $F$ has at most one singularity outside of the
 coordinate lines, and so this factor must meet the other factors only in the 
 point $[1:1:-1]$ where they meet.
 It follows that $\gamma=\pm 1$ and $C=|c|=1$.
 
 We see that the only possible irreducible factors of $F$ are
\[ 
  x+z\,,\quad y+z\,,\quad z^2-xy\,,\quad\mbox{ and }\quad
  y-x\,.
\]
Since these four factors have distinct tangents at $[1:1:-1]$, the singularity of $F$ at
this point is ordinary. 
By the last part of Theorem \ref{Th:singularities}, $F$ can have at most two distinct
factors through $[1:1:-1]$ 
so the theorem follows.
\end{proof}

%
\section{Irreducible polynomials}\label{S:not_contracted}
%

We classify irreducible factors of $F$ which are not contracted by the toric polar Cremona
transformation.
Specifically, we prove the following theorem.
\medskip

\noindent{\bf Theorem~\ref{T:not_contracted}.}
{\it 
 If $F$ is an irreducible form defining a curve with no
 singularities outside the coordinate lines whose toric polar linear
 system maps this curve birationally onto a line,
 then $F$ is equivalent to one of the following forms,
 \begin{enumerate}

  \item $x^2+y^2+z^2-2(xy+xz+yz)$, or

  \item $(x+z)^d + yz^{d-1}$, for some integer $d\geq 1$.

 \end{enumerate}
}\medskip

Since the curve $F=0$ is rational, it has a parameterization
$\Blue{\gamma}\colon \P^1\to\P^2$ which determines $F$ up to a constant.
The composition of $\gamma$ with the toric polar Cremona transformation of $F$ is a map 
$\P^1\to \P^2$ of degree 1.
We will deduce from this and the location of the singularities of $F$ that there are exactly
three distinct irreducible factors appearing in $\gamma$. 

Applying quadratic Cremona transformations 
puts $\gamma$ into a standard form from which the 
hypothesis on the singularities of $F=0$ restricts $F$ to be equivalent to one of the forms
of Theorem~\ref{T:not_contracted}.
An important technical part of this argument is the local contribution to the arithmetic
genus of a singular point of a binomial curve, which we compute in Section~\ref{Sec:genus}.

%
\subsection{Linear factors in $\gamma$}
%
Suppose that $F$ is a form of degree $d$ that satisfies the hypotheses of
Theorem~\ref{T:not_contracted} and let $\gamma:=[f:g:h]\,\colon\P^1\to\P^2$ parameterize
the curve $F=0$.
Then $f$, $g$, and $h$ are are coprime forms of degree $d$ on $\P^1$.
Because the toric polar map $\varphi_F$ sends the image of $\gamma$ (the curve $F=0$)
isomorphically onto a line, the map $\P^1\to\P^2$ with components
 \begin{equation}\label{E:components}
   f F_x(\gamma),\  gF_y(\gamma),\ \mbox{ and }\ \  hF_z(\gamma)
 \end{equation}
has degree 1, and thus these forms become linear after removing common factors.
We study their syzygy module to show that there are only three distinct irreducible
factors in $fgh$.\smallskip 

Choose homogeneous coordinates $[s,t]$ on $\P^1$ with $st$ coprime to
$fgh$. 
As $\gamma$ parameterizes $F$, we have $F(\gamma)\equiv 0$ on $\P^1$.
Differentiating with respect to $s$ and $t$ gives
 \begin{equation}\label{Eq:first_relation}
   \left[\begin{matrix}f_s&g_s&h_s\\f_t&g_t&h_t\end{matrix}\right]
   \left[\begin{matrix}F_x(\gamma)\\F_y(\gamma)\\F_z(\gamma)\end{matrix}\right]
   \ =\ \left[\begin{matrix}0\\0\end{matrix}\right]\ .
 \end{equation}
Using the Euler relations $sf_s+tf_t=df$ (and the same for $g$ and $h$) gives the
syzygy
 \[
    f F_x(\gamma)\ +\ gF_y(\gamma)\ +\ hF_z(\gamma)\ =\ 0\,.
 \]
Multiplying the first row of~\eqref{Eq:first_relation} by $fgh$ gives a second syzygy, so
we have 
 \begin{equation}\label{Eq:syzygies}
   \left[\begin{matrix}f_sgh&fg_sh&fgh_s\\1&1&1\end{matrix}\right]
   \left[\begin{matrix}fF_x(\gamma)\\gF_y(\gamma)\\hF_z(\gamma)\end{matrix}\right]
   \ =\ \left[\begin{matrix}0\\0\end{matrix}\right]\ .
 \end{equation}
%
An equivalent set of syzygies is given by the rows of the matrix
 \begin{equation}\label{Eq:other_syzygies}
   \left[\begin{matrix}f_sgh-fgh_s&fg_sh-fgh_s&0\\1&1&1\end{matrix}\right]\ .
 \end{equation}
Since the three components~\eqref{E:components} share a common factor whose removal yields
linear forms $(p,q,r)$ with the same syzygy matrix, $r=-p-q$ and 
the removal of common factors from the first row of~\eqref{Eq:other_syzygies}
gives the syzygy $(-q,p,0)$.

There are three sources for common factors of the first row of~\eqref{Eq:other_syzygies}.
 \begin{enumerate}
  \item Common factors of $f$ and $f_s$, of $g$ and $g_s$, or of $h$ and $h_s$,

  \item common factors of some pair of $f$, $g$, or $h$, and

  \item common factors of $f_sh-fh_s$ and $g_sh-gh_s$.
 \end{enumerate}

   A common factor of the third type that is not of type (1) or (2) vanishes at a point $p\in\P^1$ where 
 \[
  \mbox{\rm rank} \left[\begin{matrix}f_s&g_s&h_s\\f&g&h\end{matrix}\right]
   \ \leq\ 1\,.
 \]
 The Euler relation implies that we also have 
 \begin{equation}\label{E:t_differential}
  \mbox{\rm rank} \left[\begin{matrix}f_s&g_s&h_s\\tf_t&tg_t&th_t\end{matrix}\right]
   \ \leq\ 1\,,
 \end{equation}
 and so $t$ is a common factor of the third type.
 Suppose now that $t(p)\neq 0$.
 Then~\eqref{E:t_differential} shows that 
 the differential of $\gamma$ does not have full rank at $p$,
 and so the the curve $F=0$ is singular at $\gamma(p)$.
 But such a singular point must lie on a coordinate line of $\P^2$ and so 
 one of $f$, $g$, or $h$ vanishes at $p$.
 Without loss of generality, suppose that $f(p)=0$.
 Then $f_s(p)h(p)=0$, as $f_sh-fh_s$ vanishes at $p$,
 and so the common factor vanishing at $p$ divides either $f_s$ or $h$, and
 is therefore a factor of type (1) or (2).
 Thus $t$ is the only factor of type (3) that is not of type (1) or (2).
 As $fgh$ is coprime to $t$, the common
 factor $t$ has multiplicity 1.

Now suppose that $\ell$ is a linear factor of $fgh$ with $\ell^a$, $\ell^b$, and $\ell^c$
exactly dividing $f$, $g$, and $h$, respectively.
Then $\ell^{a+b+c}$ exactly divides $fgh$ and $\ell^{a+b+c-1}$ exactly divides the entries
in the first row of~\eqref{Eq:other_syzygies}.
It follows that if the prime factorization of $fgh$ is $\ell_1^{a_1}\dotsb \ell_k^{a_k}$,
then the common factor of the first row of~\eqref{Eq:other_syzygies} is
\[
    t \ell_1^{a_1-1} \dotsb \ell_k^{a_k-1}\,.
\]
This has degree $3d+1-k$.
Since the entries in the first row of~\eqref{Eq:other_syzygies} have degree $3d-1$, and 
removing this common factor gives linear forms, we have that
\[
   3d-1\ =\ 1+3d+1-k\,,
\]
or $k=3$.
Thus there are exactly three distinct linear factors dividing $fgh$.

%
\subsection{Arithmetic genus of binomial germs}\label{Sec:genus}
%

We compute $\delta_{(0,0)}$, the contribution at the origin to the arithmetic genus of a
curve $C$ with germ 
 \begin{equation}\label{E:binomial_germ}
   (x^a\: -\: y^b)\; u(x,y)\ =\ 0\,,
 \end{equation}
where $u(0,0)\neq 0$.
Write $p_a(C)$ for the arithmetic genus of a curve $C$.

\begin{lemma}\label{L:binomial_germ}
  Let $C$ be a curve on a smooth surface $S$ with germ~$\eqref{E:binomial_germ}$
  given in local coordinates $(x,y)$ of a point $p\in S$.
  If $\widetilde{C}$ is obtained from $C$ by a sequence of blowups in $p$ and points
  infinitely near $p$ and is smooth at all points infinitely near $p$, then
 \begin{equation}\label{E:arith_genus}
   p_a(\widetilde{C})\ =\ p_a(C)\ -\ 
   \tfrac{1}{2}\bigl( (a-1)(b-1) + \gcd(a,b) - 1  \bigr)\,.
 \end{equation}
\end{lemma}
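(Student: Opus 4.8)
The plan is to reduce the computation of $\delta_{(0,0)}$ to a standard resolution of the germ $(x^a-y^b)u(x,y)=0$ and then to recognize the resulting number as the $\delta$-invariant of the singularity. Since $u(0,0)\neq 0$, the factor $u$ is a unit in the local ring $\mathcal{O}_{S,p}$, so the germ of $C$ at $p$ equals the germ of the curve $x^a-y^b=0$; in particular $\delta_{(0,0)}$ depends only on $a$ and $b$. The general drop in arithmetic genus under a resolution is the sum over all points $q$ infinitely near $p$ of $\binom{m_q}{2}$, where $m_q$ is the multiplicity of the strict transform at $q$ (together with the bookkeeping $m_q(m_q-1)/2$ for the decrease of the genus of the \emph{singular} model); equivalently $p_a(\widetilde{C}) = p_a(C) - \delta_{(0,0)}$ when $\widetilde C$ is the strict transform under the sequence of blowups in the statement. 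So the entire content of the lemma is the identity
\[
   \delta_{(0,0)}\ =\ \tfrac12\bigl((a-1)(b-1)+\gcd(a,b)-1\bigr).
\]

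To prove this identity I would first treat the case $\gcd(a,b)=1$, where $x^a-y^b=0$ is an irreducible (unibranch) cuspidal germ, and the formula becomes $\delta=\tfrac12(a-1)(b-1)$. This is classical: it follows from the normalization $t\mapsto(t^b,t^a)$ (assuming $a\le b$, say), whose conductor has length exactly $(a-1)(b-1)/2$ as a consequence of the numerical semigroup $\langle a,b\rangle$ having genus $(a-1)(b-1)/2$ — this is a Frobenius/Sylvester count of the gaps of $\langle a,b\rangle$. Alternatively one can run the Euclidean algorithm on $(a,b)$ through successive blowups and sum the $\binom{m_q}{2}$ contributions, obtaining the same quantity by induction on $a+b$. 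For the general case write $a = e a'$, $b = e b'$ with $e=\gcd(a,b)$ and $\gcd(a',b')=1$; then $x^a-y^b = \prod_{\zeta^e=1}(x^{a'}-\zeta y^{b'})$ (after adjusting by a unit, or working with $x^{a'}-y^{b'}$ up to the standard substitution), so the germ has $e$ smooth-after-normalization branches, each a copy of the cuspidal germ $x^{a'}-y^{b'}=0$, pairwise meeting with intersection multiplicity $a'b' = ab/e^2$ at the origin. Using the additivity $\delta = \sum_i \delta_i + \sum_{i<j}(C_i\cdot C_j)_p$, we get
\[
  \delta_{(0,0)}\ =\ e\cdot\tfrac12(a'-1)(b'-1)\ +\ \binom{e}{2}\,a'b'\,,
\]
and a short algebraic simplification, using $a=ea'$, $b=eb'$, $e=\gcd(a,b)$, turns this into $\tfrac12\bigl((a-1)(b-1)+e-1\bigr)$, which is the claimed formula.

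The main obstacle I anticipate is purely bookkeeping rather than conceptual: justifying cleanly that the sequence of blowups in the statement actually computes $\delta_{(0,0)}$ (i.e.\ that it resolves the germ and that no contributions are missed or double-counted), and carrying out the branch-decomposition of $x^a-y^b$ carefully enough that the unit $u$ and any needed coordinate substitution do not affect the count. Both points are standard, but the decomposition into $e$ branches and the intersection-multiplicity computation $a'b'$ between two of them should be spelled out, since the final algebraic identity hinges on exactly these numbers. Once the decomposition and the classical $\delta=\tfrac12(a-1)(b-1)$ for coprime exponents are in hand, the rest is the one-line simplification above.
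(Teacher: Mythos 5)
Your argument is correct, but it follows a genuinely different route from the paper's. The paper proves the lemma by induction on $\max\{a,b\}$: after recording the standard genus drop $p_a(C')=p_a(C)-\tfrac12 m(m-1)$ under a single blowup at a point of multiplicity $m$, it handles the cases $\min\{a,b\}=1$ and $a=b$ directly, and for $a<b$ blows up once (the substitution $x\mapsto xy$ turns $x^a-y^b$ into $x^a-y^{b-a}$ times a unit) so that the Euclidean algorithm on the pair $(a,b)$ drives the induction — this is the "alternative" route you mention in passing. Your main line of attack is instead to reduce to the reduced germ $x^a-y^b$, split it into $e=\gcd(a,b)$ unibranch germs $x^{a'}-\zeta y^{b'}$ with $\zeta^e=1$, invoke the classical conductor/numerical-semigroup identity $\delta=\tfrac12(a'-1)(b'-1)$ for the coprime cusp, and then use additivity of $\delta$ over branches together with the pairwise local intersection numbers $a'b'$. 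The arithmetic at the end is right: $e\cdot\tfrac12(a'-1)(b'-1)+\binom{e}{2}a'b'=\tfrac12(ab-a-b+e)=\tfrac12\bigl((a-1)(b-1)+\gcd(a,b)-1\bigr)$. The trade-off is that your proof imports two standard but nontrivial facts (the unibranch $\delta$-formula via the semigroup $\langle a',b'\rangle$, and additivity of $\delta$ with branch pairings), whereas the paper stays self-contained, using nothing beyond the single-blowup genus formula and elementary induction; on the other hand, your decomposition makes the appearance of $\gcd(a,b)$ in the final formula conceptually transparent, which the Euclidean-algorithm induction somewhat obscures.
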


We call the difference $p_a(C)-p_a(\widetilde{C})$ the 
\Blue{{\sl $\delta$-invariant of $C$ at $p$}}.

\begin{proof}
Recall the formula~\cite[Cor V.3.7]{Hart} for the arithmetic genus of
the strict transform $C'$ of a curve $C$ obtained by blowing up a point of multiplicity
$a$ in $C$,
 \begin{equation}\label{E:genus_contribution}
      p_a(C')\ =\ p_a(C)\ -\ \tfrac{1}{2}a(a-1)\,.
 \end{equation}

Let $C$ be defined near $p$ by~\eqref{E:binomial_germ}.
Then $C$ has multiplicity  $\min\{a,b\}$ at $p$.
If $\min\{a,b\}=1$, then $C=\widetilde{C}$ and~\eqref{E:arith_genus}
becomes $p_a(\widetilde{C})=p_a(C)$.

If $a=b$, then $C$ consists of $a$ smooth branches meeting pairwise transversally at $p$.
Blowing up $p$ separates these branches so that $C'=\widetilde{C}$.
By~\eqref{E:genus_contribution}, we have
\[
   p_a(C')\ =\ p_a(C)\ -\ \tfrac{1}{2}a(a-1)\ =\ 
    p_a(C)\ -\ \tfrac{1}{2}\bigl((a-1)(a-1) + a -1\bigr)\,,
\]
which establishes the lemma in this case.

We complete the proof by induction on the maximum of the
exponents of $x$ and $y$.
Suppose that $a<b$.
Then $C$ is tangent to the curve $y=0$ at $p$ and so to compute the blowup $C'$, we 
substitute $x=xy$ in~\eqref{E:binomial_germ} to obtain
\[
   y^a \; (x^a-y^{b-a})\,\cdot\,u(xy,y)\,.
\]
The exceptional divisor ($y=0$) has multiplicity $a$, and the curve $C'$ has local
equation $(x^a-y^{b-a}) u'$, where $u'(x,y)=u(xy,y)$ and so $u'(0,0)\neq 0$.
Since $a,b-a<b=\max\{a,b\}$, our induction hypothesis applies to $C'$ to give
\[
   p_a(\widetilde{C})\ =\ p_a(C')\ -\ 
   \tfrac{1}{2}\bigl( (a-1)(b-a-1)+\gcd(a,b-a) -1  \bigr)\,.
\]
Using~\eqref{E:genus_contribution}, this becomes
 \begin{eqnarray*}
   p_a(\widetilde{C})&=& p_a(C)\ -\ \tfrac{1}{2}a(a-1)\ -\ 
   \tfrac{1}{2}\bigl( (a-1)(b-a-1)+\gcd(a,b-a) - 1  \bigr)\\
    &=&  p_a(C)\ -\ \tfrac{1}{2}\bigl( (a-1)(b-1) + \gcd(a,b)-1  \bigr)\,,
 \end{eqnarray*}
 which completes the proof.
\end{proof}

%
\subsection{Classification}
%

Suppose that $\gamma\colon\P^1\to\P^2$ parameterizes the curve $F=0$.
We may assume that the three linear forms dividing components of $\gamma$ are $s$, $t$,
and $\Blue{\ell}:=-(s+t)$. 
Since the components are relatively prime forms of degree $d$, there are seven
possibilities for $\gamma$, up to permuting components and factors.\medskip 

\begin{tabular}{rlcrlcrl}
 I   & $[s^at^{d-a} : t^b\ell^{d-b} : s^{d-c}\ell^c]$ & \ &
 II  & $[s^at^b\ell^{d-a-b} :  s^{d-c}t^c :  \ell^d]$  & \ &
 III & $[s^at^{d-a} : t^b\ell^{d-b} : \ell^d]$   \\
 IV  & $[s^at^{d-a} : s^{d-b}t^b : \ell^d]$       & \ &
 V   & $[s^at^b\ell^{d-a-b} :  t^d :  \ell^d]$ & \ &
 VI  & $[s^at^{d-a} : t^d : \ell^d]$        \\
 VII & $[s^d : t^d : \ell^d]$\smallskip
\end{tabular}

We assume that all exponents appearing here are positive.

\begin{theorem}\label{T:gamma_classification}
 Suppose that $\gamma$ is a curve with parameterization one of the types {\rm I}---{\rm VII}.
 \begin{enumerate}
  \item If $\gamma$ has type {\rm I} and is smooth outside the coordinate lines, then
         $\gamma$ is equivalent to either
\[
    [s^2t^2\,:\, t^2\ell^2\,:\,  s^2\ell^2]
    \qquad\mbox{or}\qquad
    [s^{d-1}t\,:\, t^{d-1}\ell\,:\, s^{d-1}\ell]\,.
\]
  \item If $\gamma$ does not have type {\rm I}, then it may be transformed into a curve of
        type {\rm I} via quadratic Cremona transformations. 

\end{enumerate}
\end{theorem}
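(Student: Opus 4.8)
The plan is to dispatch the two parts separately, part (2) first since it is purely computational, and part (1) by a genus count using Lemma~\ref{L:binomial_germ}.

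For part (2), the strategy is to take a parameterization $\gamma$ of one of the types II--VII and exhibit an explicit sequence of standard quadratic Cremona transformations $[x:y:z]\mapsto[yz:xz:xy]$ (in suitable coordinates adapted to the three linear factors $s,t,\ell$ on $\P^1$ and to the three coordinate lines on the target $\P^2$) that lowers the ``complexity'' of $\gamma$ until it lands in type I. The key observation is that a standard quadratic Cremona transformation post-composed with $\gamma=[f:g:h]$ replaces the components by $[gh:fh:fg]$ and then removes the common factor $\gcd(g,h)\cdot\gcd(f,h)\cdot\gcd(f,g)$; since each component is a monomial in $s,t,\ell$, this operation redistributes the exponents of $s,t,\ell$ among the three components in a controlled way. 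One checks type by type: e.g.\ from type VII $[s^d:t^d:\ell^d]$ one Cremona step gives $[t^d\ell^d:s^d\ell^d:s^dt^d]$, which after the (empty) common-factor removal is again type VII — so instead one must first apply a linear change moving a coordinate point onto the curve, or use a \emph{non-standard} quadratic Cremona; the bookkeeping is routine but must be done for each of types II--VII. I expect the argument to reduce to: (a) types III--VII each have at least one component that is a pure power of a single linear form, and a single quadratic Cremona centered appropriately either decreases the number of such ``pure'' components or decreases $d$; (b) type II transforms directly to type I. The main obstacle here is organizing the case analysis so it visibly terminates; a clean induction on, say, (number of pure-power components, degree $d$) ordered lexicographically should work.

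For part (1), the plan is a genus computation. A type~I curve $\gamma=[s^at^{d-a}:t^b\ell^{d-b}:s^{d-c}\ell^c]$ maps $\P^1$ to a plane curve of degree $d$, which is rational, so its arithmetic genus minus its $\delta$-invariant total vanishes: $p_a(F) = \sum_p \delta_p$. Now $p_a(F)=\binom{d-1}{2}$, and the singularities of the image occur only where two of the three coordinate functions vanish simultaneously on $\P^1$, i.e.\ at the three points $s=0$, $t=0$, $\ell=0$ — here we crucially use the hypothesis that $\gamma$ is smooth \emph{outside} the coordinate lines, so there are no further contributions. At each of these three points the germ of $F=0$ is, after choosing local coordinates, of the binomial type $x^\mu - y^\nu$ times a unit (the two exponents being the orders of vanishing of the two relevant components, after subtracting the common order of vanishing), so Lemma~\ref{L:binomial_germ} computes each $\delta_p$ in terms of $\gcd$'s and products of the exponents $a,b,c,d-a,d-b,d-c$. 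Setting the sum of the three $\delta$-invariants equal to $\binom{d-1}{2}$ yields a Diophantine equation in $a,b,c,d$; solving it should force either $d=4$ with $(a,b,c)=(2,2,2)$ (giving $[s^2t^2:t^2\ell^2:s^2\ell^2]$) or the exponent pattern $(a,b,c)=(d-1,d-1,d-1)$ up to the symmetry already noted, which is $[s^{d-1}t:t^{d-1}\ell:s^{d-1}\ell]$ after relabeling. The main obstacle in part (1) is the exact local-coordinate identification of each germ: one must correctly read off which pair of components governs each singular point and normalize the germ to the form~\eqref{E:binomial_germ}, being careful that the ``unit'' $u$ really is nonvanishing (this is where smoothness outside the coordinate lines re-enters) and that the two exponents fed into Lemma~\ref{L:binomial_germ} are the correct reduced multiplicities. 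Once the three $\delta_p$ are pinned down, the Diophantine step is elementary.

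Overall I expect part (2) to be the longer but more mechanical half, and the genuinely delicate point to be the local analysis at the three singular points in part (1) — correctly extracting the binomial exponents at each of $s=0$, $t=0$, $\ell=0$ and verifying the unit hypothesis of Lemma~\ref{L:binomial_germ}.
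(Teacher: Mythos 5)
Your proposal follows the paper's proof on both counts. Part~(1) is established exactly as you sketch: sum the $\delta$-invariants of Lemma~\ref{L:binomial_germ} at the three points $s=0$, $t=0$, $\ell=0$, equate with $p_a=\binom{d-1}{2}$ to obtain the Diophantine equation~\eqref{Eq:delta-invariants}, and solve it (the paper's Proposition~\ref{P:solutions} does your ``elementary Diophantine step'' by bounding the associated quadratic form on a polytope using its Hessian to force the maximum onto vertices). Part~(2) is the type-by-type reduction by standard and non-standard quadratic Cremona transformations you describe, with induction on $d$ for types~II and~III and a single Cremona step for types~IV--VII.

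Two small corrections. First, in part~(2): applying the standard Cremona $[yz:xz:xy]$ to the type~VII curve $[s^d:t^d:\ell^d]$ gives $[t^d\ell^d:s^d\ell^d:s^dt^d]$, which is \emph{not} again type~VII but is already type~I after relabeling the linear forms $s,t,\ell$ (each component is a product of two of the three, and the three pairs are distinct); so the obstacle you anticipate for type~VII does not arise and no auxiliary linear change or non-standard Cremona is needed there — the non-standard Cremonas are actually deployed in the subcases of types~II and~III. Second, a notational slip in part~(1): the Diophantine solutions that yield $[s^{d-1}t:t^{d-1}\ell:s^{d-1}\ell]$ are $(a,b,c)=(d-1,d-1,1)$ and its permutations (together with $(d-1,1,1)$ and permutations from the second branch of the case split~\eqref{Eq:alternative}), not $(d-1,d-1,d-1)$; the final parameterization you write is nevertheless the right one.
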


We deduce Theorem~\ref{T:not_contracted} from
Theorem~\ref{T:gamma_classification}.

\begin{proof}[Proof of Theorem~$\ref{T:not_contracted}$]
Suppose that $\gamma$ has the first form in Theorem~\ref{T:gamma_classification}(1).
Apply the standard Cremona transformation  $[x:y:z]\mapsto [yz:xz:xy]$ to $\gamma$  and
remove the common factor $s^2t^2\ell^2$ to obtain
\[
   [s^2t^2\ell^4 \,:\, s^4t^2\ell^2 \,:\, s^2t^4\ell^2]\ =\ 
   [\ell^2\,:\, s^2\,:\, t^2]\,,
\]
which satisfies $x^2+y^2+z^2-2(xy+xz+yz)=0$, the curve in 
Theorem~\ref{T:not_contracted}(1).

Suppose that $\gamma$ has the second form in Theorem~\ref{T:gamma_classification}(1).
Set $a:=d-1$ to obtain $[s^at:t^a\ell:s^a\ell]$.
If $a=0$, this parameterizes the line $x+y+z=0$.
If $a>0$, apply the standard Cremona transformation and 
multiply the $y$-coordinate by $(-1)^a$ to obtain
\[
  [s^at^a\ell^2 : (-1)^as^{2a}t\ell : s^at^{a+1}\ell]\ =\ 
  [t^{a-1}\ell : -(-s)^a : t^a]\,.
\]
Since $\ell=-(s+t)$, we have
\[
   (x+z)^a\ =\ (-st^{a-1})^a\ =\ (-s)^a (t^a)^{a-1}\ =\ -yz^{a-1}\,.
\]
This gives all curves of the form in Theorem~\ref{T:not_contracted}(2).
\end{proof}

We prove Theorem~\ref{T:gamma_classification} in the
following subsections. 

%
%
\subsection{Curves of type I}

Suppose that $\gamma=[s^at^{d-a}:t^b\ell^{d-b}: s^{d-c}\ell^c]$ is a rational curve of type I.
If $\gamma$ parameterizes a curve satisfying the hypotheses of
Theorem~\ref{T:not_contracted}, then it can be singular only on the coordinate lines.
Since all six exponents appearing in $\gamma$ are positive, these singularities 
occur at the coordinate points.
As $\gamma$ is rational, its arithmetic genus
must equal the sum of its $\delta$-invariants at these singular points. 

In the neighborhood of the coordinate point $[0:0:1]$, the curve has germ 
$(x^b-y^{d-a})u=0$, where $u(0,0)\neq 0$.
By Lemma~\ref{L:binomial_germ} the $\delta$-invariant is
\[
   \tfrac{1}{2}\bigl( (b-1)(d-a-1) + \gcd(b,d-a) - 1  \bigr)\,.
\]
A similar formula holds at the other points $[1:0:0]$ and $[0:1:0]$.
Summing these, equating with $p_a(C)=\binom{d-1}{2}$, and multiplying by $2$, we
obtain  
 \begin{multline}\label{Eq:delta-invariants}
 \qquad (d-1)(d-2)\ =\  d(a+b+c-3) -(ab+ac+bc) \\ + \gcd(a,d-c) 
    + \gcd(b,d-a) + \gcd(c,d-b) \,. \qquad 
 \end{multline}

We may assume that the coordinates and forms $s,t,\ell$ have been chosen so that 
$a$ is the maximum exponent and thus $d-a$ is the minimum.
We have $a\geq d-c$ and $b\geq d-a$, and there are two cases to consider
 \begin{equation}\label{Eq:alternative}
    c\geq d-b \qquad\mbox{or}\qquad d-b\geq c\,.
 \end{equation}
We study each case separately, beginning with $c\geq d-b$.

\begin{proposition}\label{P:solutions}
 The solutions to~$\eqref{Eq:delta-invariants}$ in the polytope \Blue{$P$}
 defined by the inequalities
 \begin{eqnarray}
   d-1 &\geq\ a\ \geq& d-c\nonumber\\
   d-1 &\geq\ b\ \geq& d-a\label{Eq:ineqs}\\
   d-1 &\geq\ c\ \geq& d-b\nonumber
 \end{eqnarray}
 are $(d{-}1,d{-}1,1)$, $(d{-}1,1,d{-}1)$, and $(1,d{-}1,d{-}1)$, for any $d\geq 2$, and 
 $(2,2,2)$ when $d=4$.
\end{proposition}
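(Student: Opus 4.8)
The plan is to reduce~\eqref{Eq:delta-invariants} to a combinatorial inequality on lattice points. I would substitute $p := d-1-a$, $q := d-1-b$, $r := d-1-c$, so that the inequalities~\eqref{Eq:ineqs} defining $P$ become the symmetric conditions $p,q,r\geq 0$ and $p+q,\,q+r,\,r+p\leq d-2$. Abbreviate $g_1:=\gcd(a,d-c)$, $g_2:=\gcd(b,d-a)$, $g_3:=\gcd(c,d-b)$; expanding the $\delta$-invariants, equation~\eqref{Eq:delta-invariants} reads $d(a{+}b{+}c)-(ab{+}ac{+}bc)+g_1+g_2+g_3 = d^2+2$. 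Since $a,b,c\leq d-1$, each $g_1\mid d-c$ gives $g_1\leq d-c$, and likewise $g_2\leq d-a$, $g_3\leq d-b$; hence $g_1+g_2+g_3\leq 3d-(a{+}b{+}c)$, and the equation forces the \emph{necessary} inequality $d(a{+}b{+}c)-(ab{+}ac{+}bc)\geq d^2+2-\bigl(3d-(a{+}b{+}c)\bigr)$. Using $a+b+c = 3(d-1)-(p+q+r)$ and $ab+ac+bc = 3(d-1)^2-2(d-1)(p+q+r)+(pq+qr+rp)$, this rewrites, with $\sigma:=p+q+r$ and $\pi_2:=pq+qr+rp$, as
\[
   (d-1)\sigma-\pi_2\ \geq\ (d-1)(d-2)\,.
\]
It then suffices to list the lattice points of $P$ (in the $(p,q,r)$ coordinates) satisfying this inequality and test which ones actually solve~\eqref{Eq:delta-invariants}.

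To solve the inequality: since $\pi_2\geq 0$ it forces $\sigma\geq d-2$. If $\sigma=d-2$ then $\pi_2\leq 0$, so $\pi_2=0$, which with $p,q,r\geq 0$ means at most one of them is nonzero; hence $(p,q,r)$ is a permutation of $(d-2,0,0)$. If $\sigma\geq d-1$, order $p\geq q\geq r$; then $p+q\leq d-2\leq\sigma-1$ gives $r\geq 1$. Writing $e:=d-2-(p+q)\geq 0$ and substituting $p+q=d-2-e$, $\sigma=(d-2-e)+r$ into the inequality, it becomes $pq\leq r+e\bigl(r-(d-1)\bigr)$; since $2r\leq p+q\leq d-2$ we have $r\leq (d-2)/2<d-1$, so $e\bigl(r-(d-1)\bigr)\leq 0$ and thus $pq\leq r$. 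But $p\geq q\geq r\geq 1$ gives $pq\geq r^2\geq r$, forcing $r=1$ and then $p=q=1$. So $(p,q,r)=(1,1,1)$, $\sigma=3$, and combining $\sigma\geq d-1$ with $p+q=2\leq d-2$ forces $d=4$.

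Translating back, the candidates are: $(a,b,c)$ a permutation of $(1,d-1,d-1)$, which for $d=2$ reduces to the single point $(1,1,1)$, together with $(2,2,2)$ when $d=4$. A direct check finishes the proof: for each permutation of $(1,d-1,d-1)$ one computes $d(a{+}b{+}c)-(ab{+}ac{+}bc)=d^2-d+1$ and $g_1+g_2+g_3=(d-1)+1+1=d+1$, summing to $d^2+2$; for $(2,2,2)$ with $d=4$ one gets $24-12=12$ and $g_1+g_2+g_3=2+2+2=6$, summing to $18=d^2+2$. Hence these are exactly the solutions of~\eqref{Eq:delta-invariants} in $P$. I expect the only step requiring real care to be the case $\sigma\geq d-1$ in the second paragraph, a short but delicate chain of estimates; the remainder is the change of variables, two elementary symmetric-function identities, and the concluding verification.
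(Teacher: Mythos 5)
Your proposal is correct, and it takes a genuinely different route from the paper. The paper keeps the coordinates $(a,b,c)$, writes the Hessian of the quadratic form $Q(a,b,c) = (d-1)(a+b+c)-(ab+ac+bc)$, argues from its eigenvalue signature ($-2$ with eigenvector $(1,1,1)$, $+1$ on the orthogonal plane) that a maximum of $Q$ over the polytope $P$ can occur only at a vertex (or an edge parallel to $(1,1,1)$, of which there are none), then identifies $P$ as a bipyramid and evaluates $Q$ at its five vertices. Your proof replaces all of this with the symmetrizing substitution $p=d-1-a$, $q=d-1-b$, $r=d-1-c$, under which $Q$ collapses to $(d-1)\sigma - \pi_2$ and the polytope becomes $\{p,q,r\geq 0,\ p+q,\,q+r,\,r+p\leq d-2\}$; you then split on $\sigma = d-2$ (forcing $\pi_2 = 0$, hence a permutation of $(d-2,0,0)$) versus $\sigma\geq d-1$ (where, ordering $p\geq q\geq r$ and introducing the slack $e = d-2-(p+q)$, the inequality tightens to $pq\leq r$, and $pq\geq r^2$ closes the case). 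This is more elementary: no second-order optimization, no enumeration of the bipyramid's face structure. It is also slightly more careful on one logical point the paper elides: since the bound $\gcd(a,d-c)\leq d-c$ (and its cyclic analogues) is only an inequality, $Q = (d-1)(d-2)$ is necessary but not sufficient, and you explicitly verify at the end that the surviving candidates satisfy the exact identity~\eqref{Eq:delta-invariants}, invoking cyclic symmetry of the three $\gcd$ terms to cover all three permutations of $(1,d-1,d-1)$. (Incidentally, the published proof contains a pair of sign typos --- it writes $\gcd(\alpha,\beta)\geq\min(\alpha,\beta)$ and the deduced inequality with the reversed direction --- which your derivation via divisibility avoids.)
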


\begin{proof}
 Since $\gcd(\alpha,\beta)\geq\min(\alpha,\beta)$, for positive integers $\alpha$ and
 $\beta$, we may simplify~\eqref{Eq:delta-invariants} to obtain the inequality
%
\[
   (d-1)(d-2)\ \geq\ 
    (d-1)(a+b+c) -(ab+ac+bc) \,.
\]
 Let $\Blue{Q}=Q(a,b,c)$ be the symmetric quadratic form defined by the right hand side of
 this inequality. 
 We find its maximum values on the polytope $P$.
 First, the Hessian of $Q$ is
\[
   \mbox{hess}(Q)\ =\ 
    \left(\begin{matrix}0&-1&-1\\-1&0&-1\\-1&-1&0\end{matrix}\right)\ .
\]
 This has negative eigenvalue $-2$ with eigenvector $(1,1,1)$
 and positive eigenvalue $1$ with two-dimensional eigenspace $a+b+c=0$.
 In particular, $Q$ cannot take a maximum value in the interior of the polytope $P$ or in
 any of its facets.
 It can only take a maximum value in an edge that is parallel to the negative eigenspace
 $(1,1,1)$.

 The polytope $P$ is a symmetric bipyramid over the triangle whose vertices are
 \begin{equation}\label{Eq:triangle}
   (d{-}1,\,d{-}1,\,    1)\,,\ 
   (d{-}1,\,    1,\,d{-}1)\,,\ 
   (    1,\,d{-}1,\,d{-}1)\,.
 \end{equation}
 and with apices $(d{-}1,d{-}1,d{-}1)$ and $(\frac{d}{2},\frac{d}{2},\frac{d}{2})$.
 Since $P$ has no edge parallel to the negative eigenspace, $Q$ takes its maximum value 
 at vertices of $P$.

 The form $Q$ takes value $0$ at $(d{-}1,d{-}1,d{-}1)$, 
 $\frac{3}{4}d(d-2)$ at $(\frac{d}{2},\frac{d}{2},\frac{d}{2})$, and 
 $(d-1)(d-2)$ at the vertices~\eqref{Eq:triangle} of the triangle, and so the 
 vertices of the triangle give solutions.
 When $d=2$, $P$ degenerates to a point $(1,1,1)$, which is a solution
 to~\eqref{Eq:delta-invariants}.
 The only remaining possibility is that the point $(\frac{d}{2},\frac{d}{2},\frac{d}{2})$
 satisfies~\eqref{Eq:delta-invariants}.
 But then 
\[
    (d-1)(d-2)\ =\ \tfrac{3}{4}d(d-2)\,,
\]
in which case $d=4$ and so $(a,b,c)=(2,2,2)$ is the only other solution.
\end{proof}

If we take the alternative inequality in~\eqref{Eq:alternative}, 
$d-b\geq c$, then $Q$ becomes 
\[
   d(a+b+c-1) -a -ab-ac-bc\,.
\]
Replacing the third inequality in~\eqref{Eq:ineqs} by $d{-}b\geq c\geq 1$ defines a
tetrahedron with vertices 
\[
   (d{-}1,\, d{-}1,\, 1),\  (d{-}1,\,1,\,d{-}1),\ 
   (d{-}1,\, 1,\, 1),\ (\tfrac{d}{2},\tfrac{d}{2},\tfrac{d}{2})\,.
\]
Similar arguments as in the proof of Proposition~\ref{P:solutions} give the additional
solution $(d{-}1,1,1)$ to~\eqref{Eq:delta-invariants}.
By symmetry, we also obtain  solutions $(1,d{-}1,1)$ and $(1,1,d{-}1)$.\medskip

We write the curves $\gamma$ corresponding to these solutions.
The solution $(a,b,c)=(2,2,2)$ gives the expression  
\[
    [s^2t^2\,:\, t^2\ell^2\,:\, s^2\ell^2]\,,
\]
for $\gamma$ and the solutions $(a,b,c)=(d{-}1,d{-}1,1)$ give the expressions
 \begin{equation}\label{E:forms}
    [s^{d-1}t\,:\, t^{d-1}\ell\,:\, s^{d-1}\ell]\,.
 \end{equation}
The other two symmetric solutions give equivalent curves.
Lastly, the solutions $(a,b,c)=(d{-}1,1,1)$ give the expressions
\[
    [s^{d-1}t\,:\, t\ell^{d-1}\,:\, s^{d-1}\ell]\,,
\]
which become the expressions~\eqref{E:forms} under 
$x\leftrightarrow z$ and $t\leftrightarrow \ell$.

%
%
\subsection{Reduction to curves of type I}

%
%
\subsubsection{Quadratic Cremona transformations} 
We will show how curves of types II---VII are equivalent to curves of type I through
quadratic Cremona transformations.
We will sometimes use the non standard quadratic Cremona transformation
\[
   [x\colon y\colon z]\ \longmapsto  [z^2\colon xz\colon xy]\,.
\]
Permuting the variables gives five other non standard quadratic Cremona transformations.

%
%
\subsubsection{Curves of type II}
Suppose that $\gamma=[s^at^b\ell^{d-a-b}:s^{d-c}t^c:\ell^d]$ has type II.
We show that this may be transformed into a curve of type I by induction on $d$.
We will either transform $\gamma$ into a curve of type I or one of type II of lower
degree. 
Since $a+b<d=(d-c)+c$, interchanging $s$ and $t$ if necessary, we may assume that
$b<c$.
Applying the standard Cremona $[xy:xz:yz]$ transformation and removing the common factor of
$t^b\ell^{d-a-b}$ gives
\[
   [s^{a+d-c}t^{b+c}\ell^{d-a-b}\,:\, s^at^b\ell^{2d-a-b}
    \,:\, s^{d-c}t^c\ell^d]\ =\ 
   [s^{a+d-c}t^c\,:\, s^a\ell^d\,:\, s^{d-c}t^{c-b}\ell^{a+b}]\,.
\]
There remains a common power of $s$ that we can remove.
There are three cases to consider.

\begin{enumerate}
 \item If $a>d-c$, then we remove the common factor of $s^{d-c}$ to obtain a curve of
       type I.
 \item If $a=d-c$, then $d=a+c$ and we remove the common factor of $s^a=s^{d-c}$ to
       obtain 
 \[
      [s^at^c\,:\, \ell^{a+c}\,:\, t^{c-b}\ell^{a+b}] \ =\ [x\,:\,y\,:\,z]\,.
 \]
      We now apply the non standard Cremona transformation $[z^2:xz:xy]$ and remove the common
      factor $t^{c-b}\ell^{a+b}$ to obtain
 \begin{equation}\label{Eq:one_case_of_many}
      [t^{2c-2b}\ell^{2a+2b}\,:\, s^at^{2c-b}\ell^{a+b}\,:\, s^at^c\ell^{a+c}]
       \ =\ 
      [t^{c-b}\ell^{a+b}\,:\, s^at^c\,:\,s^at^b\ell^{c-b}]\,.
 \end{equation}
    If $b<c-b$, then we remove the common factor $t^b$ to obtain a curve of type I.
    If $b=c-b$, then we remove $t^b$ to get
 \[
     [\ell^{a+b}\,:\, s^at^b\,:\, s^a\ell^b]\ =\ [x\,:\,y\,:\,z]\,.
 \]
   Applying the non standard Cremona transformation $[z^2\,:\,xy\,:\,yz]$ to get
 \[
    [s^{2a}\ell^{2b}\,:\, s^at^b\ell^{a+b}\,:\, s^{2a}t^b\ell^b]\ =\ 
    [s^a\ell^b\,:\, t^b\ell^a\,:\, s^at^b]\,,
 \]
  which has type I.
  Finally, if $b>c-b$, then we remove the common factor of $t^{c-b}$
  from~\eqref{Eq:one_case_of_many} to obtain
 \[
     [\ell^{a+b}\,:\, s^at^b\,:\,s^at^{2b-c}\ell^{c-b}]\,,
 \]
  which has type II and degree $a+b<a+c=d$.

 \item If $a<d-c$, then we instead apply the non standard Cremona transformation
    $[x^2 :  yz : xz]$ to $\gamma$ and remove the common factor $s^at^b\ell^{d-a-b}$
       to obtain 
  \[ 
     [s^at^b\ell^{d-a-b}\,:\, s^{d-c-a}t^{c-b}\ell^{a+b}\,:\, \ell^d]\,.
  \]
    Removing the final common factor $\ell^{\min\{d-a-b,a+b\}}$ gives another curve of
    type II, but of lower degree.
\end{enumerate}

%
%
\subsubsection{Curves of type III}
Suppose that $\gamma=[s^at^{d-a}:t^b\ell^{d-b}:\ell^d]$ has type III.
We apply the standard Cremona transformation $[xy:xz:yz]$ and remove the common factor $\ell^{d-b}$ to
obtain 
\[
   [s^a t^{b+d-a}\,:\, s^at^{d-a}\ell^b\,:\,t^b\ell^d]\,.
\]
There remains a common power of $t$ that we can remove.
There are three cases to consider.

\begin{enumerate}
 \item If $b>d-a$, we factor out $t^{d-a}$ to get the type I curve, 
\[
   [s^at^b\,:\, s^a\ell^b\,:\, t^{a+b-d}\ell^d]\,.
\]

 \item If $b=d-a$, we instead apply the non standard Cremona transformation 
       $[y^2 : xy : xz]$ to $\gamma$ and factor out $t^b\ell^a$ to  get the type I
       curve, 
\[
      [t^b\ell^a\,:\, s^at^b\,:\, s^a\ell^b]\,.
\]

 \item If $b<d-a$, we factor out $t^b$ to obtain
\[
    [s^at^{d-a}\,:\, s^at^{d-a-b}\ell^b\,:\, \ell^d]\,,
\]
    which has type II, and we have already shown how to reduce a curve of type II to a
    curve of type I.
\end{enumerate}

%
%
\subsubsection{Curves of type IV}
Suppose that $\gamma=[s^at^{d-a}:s^{d-b}t^b:\ell^d]$ has type IV.
We may assume that $a>d-b$ and thus $b>d-a$.
We apply the standard Cremona transformation $[yz:xz:xy]$ and remove the common factor of
$s^{d-b}t^{d-a}$, to get the type I curve, 
\[
   [t^{a+b-d}\ell^d\,:\,s^{a+b-d}\ell^d\,:\, s^at^b]\,.
\]

%
%
\subsubsection{Curves of type V}
Suppose that $\gamma=[s^at^b\ell^{d-a-b}:t^d:\ell^d]$ has type V.
If we apply the standard Cremona transformation $[yz:xz:xy]$, we get the type I curve,
\[
   [t^d\ell^d\,:\,s^at^b\ell^{2d-a-b}\,:\, s^at^{d+b}\ell^{d-a-b}]\ =\ 
   [t^{d-b}\ell^{a+b}\,:\, s^a\ell^d\,:\, s^at^d]\,.
\]

%
%
\subsubsection{Curves of type VI}

Suppose that $\gamma=[s^at^{d-a}:t^d:\ell^d]$ has type VI.
If we apply the standard Cremona transformation $[yz:xz:xy]$, we get the type I curve,
\[
   [t^d\ell^d\,:\,s^at^{d-a}\ell^d\,:\, s^at^{2d-a}]\ =\ 
   [t^a\ell^d\,:\, s^a\ell^d\,:\, s^at^d]\,.
\]

%
%
\subsubsection{Curves of type VII}

Suppose that $\gamma=[s^d:t^d:\ell^d]$ has type VII. 
If we apply the standard Cremona transformation, we get the type I curve,
\[
   [t^d\ell^d\,:\,s^d\ell^d\,:\, s^dt^d]\,.
\]

%
\section{Singularities of polynomials}\label{S:technical}
%

Fix a form $F$ with prime factorization 
$F=F_{1}^{n_{1}}F_{2}^{n_{2}}\cdots F_k^{n_k}$.
Write $\sqrt{F}=F_{1}F_{2}\cdots F_{r}$ for its square free part, the product of its prime
factors.  In the following theorem we do not distinguish between the
curves $F=0$ and $\sqrt {F}=0$.\medskip 

\noindent{\bf Theorem~\ref{Th:singularities}.}
{\it 
  If a form $F$ coprime to $xyz$ defines a toric polar
  Cremona transformation, then the curve $F=0$ has at most one singular point
  outside the coordinate lines, and if there is such a point, then all factors of $F$ are
  contracted. Furthermore, if this singular point is ordinary,
 then $F$ has at most two distinct factors through the singular point.
}\medskip

We prove Theorem~\ref{Th:singularities} by studying the resolution of base points of the
toric polar linear system $T(F)$ at a singular point $p$ on $\sqrt {F}=0$ not lying on the
coordinate lines.  
In the resolution, there is a tree of exceptional rational curves lying over $p$.
We show that the leaves of this tree are exceptional curves above $p$
that are not contracted by the lift of the 
toric polar map, but are components of the lift of $\sqrt{F}=0$.
This implies that there is at most one such leaf and its exceptional
curve has multipicity 1, and that all other
components of this lift, including the strict transforms of 
the components of $\sqrt{F}=0$, are contracted by the toric polar map.
Thus there is at most one such singular point, and if it is ordinary,
then $F$ has two branches at this point.

Any common factor in $T(F)=\langle xF_{x},yF_{y}, zF_{z} \rangle$ is a multiple component of $F$.   
Indeed, if
$F=F_{1}^{n_{1}}F_{2}^{n_{2}}\cdots F_{r}^{n_{r}}$ is the prime factorization of $F$, 
then 
\[
   \Blue{G}\ :=\  \gcd(xF_x,yF_y,zF_z)\ =\ 
    F_{1}^{n_{1}-1}F_{2}^{n_{2}-1}\cdots F_{r}^{n_{r-1}}\,.
\]
is a common factor of $xF_{x},yF_{y}, zF_{z}$.
Removing this factor we get the vector space 
 \begin{equation}\label{Eq:RLS}
   \Blue{\sqrt{T(F)}}\ :=\ \langle F^x,F^y,F^z\rangle\ :=\ 
   \left\langle \frac{xF_x}{G},\frac{yF_y}{G},\frac{xF_x}{G}\right\rangle,
 \end{equation}
where $\Blue{F^{x}}:=n_{1}xF_{1,x}\dotsb F_{r}+\dotsb+n_{r}xF_{1}\dotsb F_{r,x}$,
and the same for $F^y$ and $F^z$.
Notice that 
\[
   F^x+F^y+F^z\ =\ (n_{1}\deg(F_1)+\dotsb+n_{r}\deg(F_r))F_1 \dotsb F_r
   \ =\ \deg(F)\sqrt{F}\,.
\]
In particular any common factor for form in $\sqrt{T(F)}$ is a factor of $\sqrt{F}$
and is thus one of the $F_{i}$. 
But no $F_{i}$ is a common factor, so forms in $\sqrt{T(F)}$ are coprime.

%
%

Let $p$ be a multiple point of the curve $\sqrt{F}=0$ outside the coordinate lines.  It is  
a common zero of the forms in $F^x,F^y,F^z$, as well as all partial
derivatives of $\sqrt{F}$, and is therefore a base point for
 $\sqrt{T(F)}$.
Resolving this base point and possibly infinitely near base points
gives a tree of exceptional rational curves lying over $p$.
We are only concerned with leaves of this tree so we assume that
$p=p_{0}, p_{1},\dotsc, p_{r}$ are
successive infinitely near base points that we blow up   
to resolve the base locus of $\sqrt{L_{F}}$ lying over $p$.  
In particular we assume that $p_{1}$ lies on the exceptional curve over $p$, the point
$p_{2}$ lies on the exceptional curve over $p_{1}$, and etc.  and that there are no base points infinitely near to $p_{r}$.  
Thus there is a unibranched curve that is smooth at $p_r$ and passes through all these 
infinitely near points. 
These are some, but not necessarily all of the infinitely near base points at
$p$.

We denote by $\pi_{1}\colon S_{1}\to S_{0}=\P^2$ 
the blow up of the point $p_{0}$, and by $E_{0}$ the exceptional curve of this map.
Inductively we denote by $\pi_{i}:S_{i}\to S_{i-1}$ the blowup of the point $p_{i-1}\in
S_{i-1}$ and by $E_{i-1}$ the exceptional curve of this map. 
Write $E_{i}$ also for the total transform in $S_k$ for $k>i$ of the exceptional curve
$E_i$ of $\pi_{i+1} \colon S_{i+1}\to S_{i}$.
The map $\pi:S_{r+1}\to \P^2$ is then the composition of the blowups $\pi_{i}$
for $i=1,\dotsc,r+1$.   

Let $\mu_0(\sqrt{T(F)})$ be the minimal multiplicity at $p$ of a curve in $\sqrt{T(F)}$.
Then the linear system $\sqrt{T(F)}_{(1)}$  on $S_1$ is generated by the strict transforms of curves
in $\sqrt{T(F)}$ having multiplicity $\mu_0(\sqrt{T(F)})$ at $p$.
Set $\mu_1(\sqrt{T(F)})$  to be the minimal multiplicity at $p_1$ of a curve
in $\sqrt{T(F)}_{(1)}$.
Then the linear system $\sqrt{T(F)}_{(2)}$  on $S_2$ is generated by the strict transforms of
curves in $\sqrt{T(F)}_{(1)}$ having multiplicity $\mu_1(\sqrt{T(F)})$ at $p_1$. 
Inductively, we obtain linear systems $\sqrt{T(F)}_{(i)}$ with multiplicities
$\mu_i(\sqrt{T(F)})$ at $p_i$.

For any curve $C$ in $\sqrt{T(F)}$, define the 
\Blue{{\sl virtual transform} $C_{(i)}$} in $\sqrt{T(F)}_{(i)}$ for $i=1,...,r$, to be unique member of 
this linear system that is mapped by $\pi_{1}\circ\dotsb\circ\pi_{i}$ to $C$ in $\P^2$.
Thus the virtual transform $C_{(i)}$ of $C$ on $S_{i}$ is the sum of the strict
transform of $C_{(i-1)}$ and $\bigl(\mu_{i-1}(C_{(i-1)})-\mu_{i-1}(\sqrt{T(F)})\bigr)E_{i-1}$, where
$\mu_{i-1}(C_{(i-1)})$ is the multiplicity of $C_{(i-1)}$ at $p_{i-1}$. 

We consider the virtual transform $\sqrt{F}_{(r+1)}$ in
$\sqrt{T(F)}_{(r+1)}$, and claim that it contains the leaf $E_{r}$ as a component.
For this we follow the line of argument in \cite {CA}, Section 8.5. We 
compare multiplicities and show that the inequality
\[
  \mu_{r}\bigl(\sqrt{F}_{(r)}\bigr)\ \geq\  \mu_{r}\bigl(\sqrt{T(F)}\bigr)
\]
is strict.
 We reduce this to a local calculation at $p$. 
 First, let $L_P$ be the polar linear system of $F$ defined by the partial derivatives
 $\langle F_x,F_y,F_z \rangle$. 
 Let $\sqrt{L_{P}}$ be the linear system obtained by removing the fixed component of
 $L_{P}$. 
 By linearity, $F_{\ell}=aF_{x}+bF_{y}+cF_{z}$ is the partial derivative of $F$ with
 respect to the linear form $\ell=ax+by+cz$. 
 In the Euler relation $dF=xF_{x}+yF_{y}+zF_{z}$ locally at $p$, the
 coordinates $x,y,z$ are units.  
 Therefore, locally at $p$, a general form in the toric polar linear system 
 $\langle xF_{x},yF_{y},zF_{z}\rangle$  is a linear combination of $F$ and its partial
 derivative $F_{\ell}$ with respect to some linear form $\ell$ that vanishes at $p$.
 In particular, such a general form has the same multiplicities as $F_{\ell}$ 
 (compare~\cite {CA} Section 7.2 and in particular Remark 7.2.4).
 So we may compute  $\mu_{i}(T(F))$ and $\mu_{i}(\sqrt{T(F)})$ locally at $p$, replacing
 $F=0$  and $\sqrt{F}=0$ with their germs $f$ and $\sqrt{f}$ at $p$,  and considering
 their partials derivatives (polars) with respect to linear forms that vanish at $p$. 
 In particular $\mu_{i}(T(F))= \mu_{i}(f_{\ell})$ for a general polar $f_{\ell}$ with
 respect to a linear form $\ell$ that vanishes at $p$.  
 We also change coordinates so that $x,y$ are local coordinates at $p$, and
 let $f_{x}, f_{y}$ be the germs of the polars with respect to $x$ and $y$.  

 We now analyze these germs.  
 For any two germs $g,\gamma$ of curves at $p$, we write \Blue{$[g,\gamma]_{p}$} for their local
 intersection multiplicity at $p$. 
 If $\gamma$ is unibranched, then this is simply the order of vanishing of the pullback of
 $g$ along a local parameterization of $\gamma$.
 Let $f=f_{1}^{n_{1}}f_{2}^{n_{2}}\cdots f_{k}^{n_{k}}$ be the irreducible factorization of
 the germ $f$ of the curve $F=0$ at $p$.  
 Set $g=f_{1}^{n_{1}-1}f_{2}^{n_{2}-1}\cdots f_{k}^{n_{k}-1}$ and let  
 $\sqrt{f}=f_{1}f_{2}\cdots f_{r}=\frac fg$, and $\overline{f_{x}}=\frac {f_{x}}g$ and
 $\overline{f_{y}}=\frac {f_{y}}g$. 

\begin{lemma}\label{L:germ_mult}
 Let $f$ be the germ of the curve $F=0$ at $p$, and let $\gamma$ be any smooth germ
 through $p$. 
 Then we have $[f,\gamma]_{p}> \min\{[f_{x},\gamma]_{p},[f_{y},\gamma]_{p}\}$.  
 Furthermore, 
\[
   [\sqrt{f}, \gamma]_{p}\ >\ 
   \min\{[\overline{f_{x}},\gamma]_{p},[\overline{f_{y}},\gamma]_{p}\}\,.
\]
\end{lemma}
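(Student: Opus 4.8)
My plan is to reduce the whole statement to a one-line chain-rule estimate read off in local coordinates.  First I would pass to local analytic coordinates $x,y$ centred at $p$, let $f$ be the germ of $F$ there, and parameterize the smooth germ $\gamma$ by a local isomorphism $t\mapsto(x(t),y(t))$ with $x(0)=y(0)=0$; smoothness says $(x'(0),y'(0))\neq(0,0)$, so in any case $x'(t)$ and $y'(t)$ are power series of order $\geq 0$ in $t$.  Recalling that $[h,\gamma]_p=\operatorname{ord}_{t=0}h(x(t),y(t))$ for a germ $h$ not vanishing on $\gamma$ (and $=\infty$ otherwise), I would first treat the case that $\gamma$ is \emph{not} a component of $F=0$, so $f(x(t),y(t))\not\equiv 0$ and $[f,\gamma]_p$ is finite and $\geq 1$.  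Differentiating and using the chain rule,
\[
   \tfrac{d}{dt}f(x(t),y(t))\ =\ f_x(x(t),y(t))\,x'(t)\ +\ f_y(x(t),y(t))\,y'(t)\,,
\]
the left side has order $[f,\gamma]_p-1$, while each summand on the right has order $\geq\min\{[f_x,\gamma]_p,[f_y,\gamma]_p\}$ (the orders of $x',y'$ being $\geq 0$), hence so does the sum; this gives $[f,\gamma]_p-1\geq\min\{[f_x,\gamma]_p,[f_y,\gamma]_p\}$, the first inequality.

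For the second inequality I would write $f=g\sqrt f$ with $g=f_1^{n_1-1}\cdots f_k^{n_k-1}$, so that $f_x=g\,\overline{f_x}$ and $f_y=g\,\overline{f_y}$.  Because local intersection multiplicity with $\gamma$ is additive over products, the first inequality becomes
\[
   [g,\gamma]_p+[\sqrt f,\gamma]_p\ >\ [g,\gamma]_p+\min\{[\overline{f_x},\gamma]_p,[\overline{f_y},\gamma]_p\}\,,
\]
and since $\gamma$ (not being a component of $f$) is not a component of $g$, the finite number $[g,\gamma]_p$ cancels, leaving $[\sqrt f,\gamma]_p>\min\{[\overline{f_x},\gamma]_p,[\overline{f_y},\gamma]_p\}$.

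Finally I would dispose of the remaining case, that $\gamma$ is a branch of $\sqrt f$, say $\gamma=\{f_i=0\}$: then $f_i\mid\sqrt f$ forces $[\sqrt f,\gamma]_p=\infty$, so it suffices that the right side stay finite.  Reducing $\overline{f_x}=\sum_j n_j f_{j,x}\prod_{l\neq j}f_l$ modulo $f_i$ kills every term but $j=i$, so $\overline{f_x}\equiv n_if_{i,x}\prod_{l\neq i}f_l$ and similarly for $\overline{f_y}$; as $\prod_{l\neq i}f_l$ is prime to $f_i$ and the irreducible germ $f_i=0$ has isolated singular locus (so $f_i$ cannot divide both $f_{i,x}$ and $f_{i,y}$), at least one of $[\overline{f_x},\gamma]_p,[\overline{f_y},\gamma]_p$ is finite, as needed.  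The first inequality follows in the same way provided $\gamma$ is not a \emph{multiple} component of $F=0$.

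I do not expect a genuine obstacle here: the content is the chain-rule estimate.  The only points that will need care are the passage from $F$ to its squarefree part by dividing out the common factor $g$, and keeping track of which local intersection numbers are permitted to be infinite when $\gamma$ happens to lie on the curve $F=0$.
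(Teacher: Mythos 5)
Your proof is correct and follows essentially the same route as the paper: parameterize the smooth germ $\gamma$, apply the chain rule to get $[f,\gamma]_p - 1 \geq \min\{[f_x,\gamma]_p,[f_y,\gamma]_p\}$, then factor $f_x = g\overline{f_x}$, $f_y = g\overline{f_y}$ and use additivity of local intersection numbers to cancel $[g,\gamma]_p$ from both sides (the paper does this by explicitly writing $n=\sum m_i n_i$ and $\min\{[f_x,\gamma]_p,[f_y,\gamma]_p\}=\sum m_i(n_i-1)+\min\{[\overline{f_x},\gamma]_p,[\overline{f_y},\gamma]_p\}$, which is the same bookkeeping). Your extra care with the case where $\gamma$ is a branch of $\sqrt f$ — showing $[\sqrt f,\gamma]_p=\infty$ while at least one of $[\overline{f_x},\gamma]_p,[\overline{f_y},\gamma]_p$ is finite — is a legitimate addition that the paper leaves implicit, though in the one place the lemma is invoked (Lemma~\ref{L:inequality}) $\gamma$ is always chosen to avoid the branches of $f$, so the extra case is not actually needed.
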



\begin{proof}
Let $t\mapsto (x(t), y(t))$ be a local parameterization of $\gamma$.  
If $[f,\gamma]_{p}=n$, then $f(x(t),y(t))=t^nu$ for some invertible series $u$.  
Taking the derivative, we have 
\[
    nt^{n-1}u+t^n \frac{du}{dt}\ =\ 
    f_x(x(t),y(t))\frac{dx}{dt} + f_y(x(t),y(t))\frac{dy}{dt}
\]
so 
\[
    n-1\ \geq\  \min\{[f_{x},\gamma]_{p},[f_{y},\gamma]_{p}\}\,.
\]
Now, if $m_{i}=[f_{i}, \gamma]_{p}$, then $n=\sum_{i=1}^rm_{i}n_{i}$ and 
 \begin{eqnarray*}
   f_x(x(t),y(t))&=& n_{1}f_{1,x}f_{1}^{n_{1}-1}f_{2}^{n_{2}}\dotsb
   f_{k}^{n_{k}}+\dotsb+n_{k}f_{1}^{n_{1}}f_{2}^{n_{2}}\dotsb f_{k}^{n_{k}-1}f_{k,x}\\
  &=& 
   f_{1}^{n_{1}-1}f_{2}^{n_{2}-1}\dotsb f_{k}^{n_{k}-1}(n_{1}f_{1,x}f_{2}\dotsb
   f_{k}+\dotsb+n_{k}f_{1}f_{2}\dotsb f_{k,x})\,.
 \end{eqnarray*}
Similarly for $f_{y}$, so
\[
   n-1\ \ge\ \min\{[f_{x},\gamma]_{p},[f_{y},\gamma]_{p}\}\ =\ \sum_{i=1}^k m_{i}(n_{i}-1)
     +\min\{[\overline{f_{x}},\gamma]_{p},[\overline{f_{y}},\gamma]_{p}\}\,.
\]
 Therefore the strict inequality 
 $[\sqrt{f}, \gamma]_{p}> \min\{[\overline{f_{x}},\gamma]_{p},[\overline{f_{y}},\gamma]_{p}\}$ also holds
 at $p$.   
\end{proof}

\begin{lemma}\label{L:inequality}
Let $L= \langle g,h \rangle$ be a linear system of germs of curves on
a smooth surface $S$ and assume that $p=p_{0},...,p_{r}$ is a sequence
of infinitely near  base points for the linear system. 
 Let $f$ be a germ of a curve in $L$ whose virtual transform $f_{(i)}
$ in $L_{(i)}
$ has multiplicity $\mu_{i}(f_{(i)}
)$ at the point $p_{i}$, 
 and let $\mu_{i}(L)$ be the multiplicity of the linear system $L_{(i)}
$ at $p_{i}$.
 Assume that  for any smooth germ $\gamma$ on $S$ through $p$ the local intersection numbers satisfy:
  $$[f, \gamma]_{p}\ >\ 
   \min\{[g,\gamma]_{p},[h,\gamma]_{p}\}\,.$$
Then we have strict inequalities
 $\mu_{i}(f_{(i)})>\mu_{i}(L)$ for each $i=0,1,\dotsc,r$.
\end{lemma}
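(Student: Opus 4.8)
The plan is to prove the inequalities $\mu_i(f_{(i)})>\mu_i(L)$ by induction on $i$, organised as a descent: after treating the point $p$ on $S$, one replaces $(L,f)$ by the virtual transforms $(L_{(1)},f_{(1)})$ on the blow-up $S_1$ and the point $p$ by $p_1$, and then iterates down the chain $p_0,p_1,\dots,p_r$. The base case is easy. Since $\mu_0(L)=\min\{\operatorname{mult}_p g,\operatorname{mult}_p h\}$ and a sufficiently general smooth germ $\gamma$ through $p$ meets each of $f,g,h$ transversally, so that $[C,\gamma]_p=\operatorname{mult}_p C$ for $C=f,g,h$, the hypothesis applied to such a $\gamma$ reads $\operatorname{mult}_p f>\min\{\operatorname{mult}_p g,\operatorname{mult}_p h\}=\mu_0(L)$, i.e. $\mu_0(f_{(0)})>\mu_0(L)$. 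The identical observation will give the inequality at any $p_i$ once a hypothesis of the same shape has been established there.

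For the descent step I would compare multiplicities using the elementary blow-up intersection formula. Writing $m:=\mu_0(L)$ and recalling that $f_{(1)}=\widetilde f+(\operatorname{mult}_p f-m)E_0$ while $C_{(1)}=\widetilde C+(\operatorname{mult}_p C-m)E_0$ for every member $C\in L$, a short computation on $\pi_1$ gives, for a germ $D$ at $p_1$ with image $\gamma:=\pi_1(D)$ at $p$, the identity $[C_{(1)},D]_{p_1}=[C,\gamma]_p-m\cdot\operatorname{mult}_p\gamma$ for every $C$ (the term $-\operatorname{mult}_p C\cdot\operatorname{mult}_p\gamma$ coming from the projection formula is corrected to $-m\cdot\operatorname{mult}_p\gamma$ by the $E_0$-part of the virtual transform). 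Taking the minimum over members, $[L_{(1)},D]_{p_1}=[L,\gamma]_p-m\cdot\operatorname{mult}_p\gamma$, and subtracting, the excess is preserved:
\[
  [f_{(1)},D]_{p_1}-[L_{(1)},D]_{p_1}\ =\ [f,\gamma]_p-[L,\gamma]_p\,.
\]
Thus the hypothesis descends to $p_1$ precisely to the extent that $[f,\gamma]_p>[L,\gamma]_p$ holds for the germs $\gamma$ that arise as images $\pi_1(D)$ of smooth germs $D$ at $p_1$; iterating, at the $i$-th stage one needs this for all $\gamma$ pushing down from smooth germs at $p_i$.

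The main obstacle — and the reason one works with virtual transforms and follows \cite{CA}, \S8.5 — is that these $\gamma$ need not be smooth at $p$: a smooth germ at $p_i$ tangent to the exceptional curve $E_{i-1}$ pushes down to a germ with a cusp-type singularity, and this happens exactly at the satellite points of the chain $p_0,\dots,p_r$. So the heart of the proof is to upgrade the hypothesis, given only for smooth germs, to arbitrary germs $\gamma$ at $p$. I would do this by revisiting the computation behind Lemma~\ref{L:germ_mult}: run on a branch $b$ of $\gamma$ with local parameterization $t\mapsto(x(t),y(t))$, that computation never uses smoothness of $b$ and produces the sharper branchwise bound
\[
  [f,b]_p-1\ \geq\ \min\bigl\{[g,b]_p+\operatorname{ord}_t x'(t),\ [h,b]_p+\operatorname{ord}_t y'(t)\bigr\}\,,
\]
together with the analogous bound for $\sqrt f$. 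One then recombines these branch estimates into the statement for $\gamma$ by Noether's formula and the proximity inequalities, tracking the multiplicities of $f$, $g$, $h$, and $\gamma$ at $p$ and at its infinitely near points. This reconciliation of the branchwise bounds with the global minimum defining $[L,\gamma]_p$ is the delicate point — precisely the cluster-theoretic argument of \cite{CA}, \S8.5. Granting it, the descent goes through, and repeatedly invoking the base-case observation along $S_1,\dots,S_r$ yields $\mu_i(f_{(i)})>\mu_i(L)$ for every $i$.
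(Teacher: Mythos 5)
Your observation that the germs $\gamma$ pushing down from smooth germs at the infinitely near points $p_i$ need not be smooth at $p$ --- exactly when the chain $p_0,\dots,p_r$ contains satellite points --- is a genuine and valuable point, and it flags a subtlety the paper itself passes over silently: the printed proof applies the hypothesis to a unibranched germ through $p_0,\dots,p_i$ that is smooth at $p_i$, and such a germ is not in general smooth at $p_0$. The reason the argument nonetheless survives is that the parameterization computation behind Lemma~\ref{L:germ_mult}, which is what actually supplies the hypothesis in Corollary~\ref{C:leaves}, never uses smoothness of $\gamma$ and hence proves the inequality for every unibranched germ; you correctly notice this too.

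Where your proposal falls short is in the execution, which is both muddled and incomplete. First, the germ $\gamma=\pi_1\circ\cdots\circ\pi_i(D)$ coming from a smooth germ $D$ at $p_i$ is automatically unibranched, so there is a single branch $b=\gamma$ and nothing to ``recombine''; the appeal to Noether's formula and proximity inequalities is aimed at a difficulty that does not arise, and the ``delicate reconciliation'' is left entirely to a citation rather than carried out. Second, the branchwise sharpening you extract from Lemma~\ref{L:germ_mult} concerns the specific polars $f_x,f_y$; it does not, as written, yield a bound for the abstract generators $g,h$ of the pencil $L$ in the lemma, so it cannot on its own re-establish the hypothesis at $p_1$ in the general setting of Lemma~\ref{L:inequality}. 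Third, and most importantly, you never actually perform the step you flag as central --- ``granting it, the descent goes through'' is a confession, not a proof. By contrast, the paper avoids your stage-by-stage propagation altogether: it normalizes $g,h$ to realize the multiplicity sequence $\mu_j$ of the pencil, fixes $i$, chooses one unibranched germ $\gamma$ through $p_0,\dots,p_i$ smooth and generic at $p_i$, and evaluates both sides of the hypothesis for this $\gamma$ along the blow-up chain: $[f,\gamma]_p=\sum_{j<i}\mu_j e_j(\gamma)+\mu_i(f_{(i)})$ exactly, via the virtual-transform bookkeeping, while $\min\{[g,\gamma]_p,[h,\gamma]_p\}\geq\sum_{j<i}\mu_j e_j(\gamma)+\mu_i(L)$; the strict inequality $\mu_i(f_{(i)})>\mu_i(L)$ then falls out in one line. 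That direct computation is precisely the content you defer, and without it your argument is not a proof.
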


\begin{proof}
The effective multiplicities at the points $p=p_{0}, p_{1},\dotsc,p_{r}$ of the strict
transforms of the polar germs coincide for all but a finite number of members in the pencil
$\langle g, h\rangle$. 
Changing variables if necessary, we may assume that the multiplicity
sequence for the germs $g$ and $h$ coincide and are equal to that of the 
linear system: $\mu_{i}:=\mu_{i}(L)$ for $i=0,\dotsc,r$. 
These multiplicities are by definition the virtual multiplicities of $f$ with respect to
the linear system $L$.   
At $p=p_{0}$ the multiplicity $\mu_{0}$  differs from the multiplicity $e_{0}(f)$ of $f$.
If $\gamma$ is a smooth germ at $p$ that avoids all tangent directions of $f$, then  
$[f,\gamma]=e_{0}(f)$, and by assumption,
 \[
    \mu_{0}(F)\ =\ e_{0}(f)\ =\ [f,\gamma]\ >\ \min\{[g,\gamma],[h,\gamma]\}
     \ =\ \mu_{0}(L)\,.
\]  
Inductively, consider the virtual transform $f_{(i)}$ of $f$ on $S_{i}$ and choose a
unibranched germ $\gamma$ through the sequence of points $p_{0},\dotsc,p_{i}$ that is smooth
at $p_{i}$ and avoids all the tangent directions of $f_{(i)}$ at $p_{i}$.  
Let $e_{j}(\gamma)$ be the multiplicity of the strict transform $\gamma_{j}$ of
$\gamma$ at the $p_{j}$, for $j=0,\ldots,i$. 
Then, by assumption, 
\[
   [f,\gamma]\ >\ \min\{[g,\gamma],[h,\gamma]\}\ \geq\ \sum_{i=0}^{i-1}
   \mu_{j}e_{j}(\gamma)+\mu_{i}\,.
\]
 On the other hand, if $e_{j}(f)$ is the multiplicity of the strict transform $f_{j}$ on
 $S_{j}$ of $f$ at $p_{j}$  and $\mu_{i}(f_{(i)})$ is the multiplicity of the virtual
 transform $f_{(i)}$ of $f$ at $p_{i}$, then  
\[
   f_{(i)}\ =\ f_{i}-\sum_{j=0}^{i-1}(\mu_{j}-e_{j}(f))E_{j}
\]
while $e_{j}(\gamma)=[E_{j},\gamma_{j}]$, so
 \begin{eqnarray*}
   [f,\gamma] &=&\sum_{j=0}^{i-1}e_j(f) e_{j}(\gamma)+[f_{i},\gamma_{i}]\\
  &=& \sum_{j=0}^{i-1} \mu_{j}e_{j}(\gamma)+
   \sum_{j=0}^{i-1}(e_{j}(f)-\mu_{j})e_{j}(\gamma)+[f_{(i)},\gamma_{i}] 
   +\sum_{j=0}^{i-1} (\mu_{j}-e_{j}(f))[E_{j},\gamma_{j}]\\
   &=& \sum_{j=0}^{i-1}\mu_{j}e_{j}(\gamma)+\mu_{i}(f_{(i)})\,.
\end{eqnarray*}
In particular, $\mu_{i}(f_{(i)}) > \mu_{i} = \mu_{i}(L)$. 
\end{proof}

Lemma~\ref{L:inequality} applied to the curve $\sqrt{F}$ in the linear system
$\sqrt{T(F)}$ yields the following corollary.

\begin{cor} \label{C:leaves}
 Let $p$ be a multiple point of $\sqrt{F}$ outside the coordinate
 lines, and let $p_{r}$ be a base point of $\sqrt{T(F)}$ infinitely
 near to $p$, such that $\sqrt{T(F)}$ has no base points infinitely
 near to $p_{r}$. 
 Then the virtual transform of $\sqrt{F}=0$ in the linear system
 $\sqrt{T(F)}_{(r+1)}$ on the blowup of $S_{r}$ at $p_{r}$ contains
 the exceptional curve  $E_{r}$ as a component.
 Furthermore, the restriction of the linear system  $\sqrt{T(F)}_{(r+1)}$ to the exceptional curve $E_{r}$ has 
 degree equal to $\mu_{r}\bigl( \sqrt{T(F)} \bigr)$, the multiplicity of $\sqrt{T(F)}$ at $p_{r}$.
\end{cor}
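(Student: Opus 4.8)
The plan is to feed the two preceding lemmas into the inductive description of virtual transforms and then finish with a short intersection count on the last exceptional curve $E_r$. By the local reduction recorded above, the multiplicities $\mu_i(\sqrt{T(F)})$ at the points $p_0,\dots,p_r$ coincide with the multiplicities at $p_i$ of a general member of the pencil of polars $L:=\langle\overline{f_x},\overline{f_y}\rangle$, and the germ of $\sqrt F=0$ at $p$ is $\sqrt f$. Lemma~\ref{L:germ_mult} supplies precisely the hypothesis needed to apply Lemma~\ref{L:inequality} to $L$ and the germ $\sqrt f$, namely
\[
  [\sqrt f,\gamma]_p\ >\ \min\{[\overline{f_x},\gamma]_p,\,[\overline{f_y},\gamma]_p\}
\]
for every smooth germ $\gamma$ through $p$. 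Using that $L$ shares the multiplicity sequence of $\sqrt{T(F)}$ along $p_0,\dots,p_r$, Lemma~\ref{L:inequality} returns the strict inequalities $\mu_i(\sqrt F_{(i)})>\mu_i(\sqrt{T(F)})$ for all $i=0,1,\dots,r$.

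The first assertion of the corollary is now the case $i=r$. Blowing up $p_r$, the inductive formula for the virtual transform gives
\[
  \sqrt F_{(r+1)}\ =\ \widetilde{\sqrt F_{(r)}}\ +\ \bigl(\mu_r(\sqrt F_{(r)})-\mu_r(\sqrt{T(F)})\bigr)\,E_r\,,
\]
where $\widetilde{\ \cdot\ }$ denotes the strict transform under $\pi_{r+1}$. Since the coefficient $\mu_r(\sqrt F_{(r)})-\mu_r(\sqrt{T(F)})$ is at least $1$, the exceptional curve $E_r$ occurs with positive multiplicity, so it is a component of $\sqrt F_{(r+1)}$; this is exactly where the strictness of the inequality is used.

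For the degree statement I would argue directly on $E_r\cong\P^1$. A general member $D$ of $\sqrt{T(F)}_{(r+1)}$ is the strict transform under $\pi_{r+1}$ of a general member $C$ of $\sqrt{T(F)}_{(r)}$, and by the very definition of $\mu_r(\sqrt{T(F)})$ as the minimal multiplicity at $p_r$ of a member of $\sqrt{T(F)}_{(r)}$, a general such $C$ has multiplicity exactly $\mu_r(\sqrt{T(F)})$ at $p_r$; hence $D$ does not contain $E_r$ and $D\cdot E_r=\mu_r(\sqrt{T(F)})$. Since $\sqrt{T(F)}$ has no base point infinitely near $p_r$, in particular none lying on $E_r$, the restriction of $\sqrt{T(F)}_{(r+1)}$ to $E_r$ is base-point-free, hence cuts out on $E_r$ a linear system of degree $D\cdot E_r=\mu_r(\sqrt{T(F)})$. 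The step I expect to require the most care is purely bookkeeping: keeping straight that a general member of each $\sqrt{T(F)}_{(i)}$ attains the minimal multiplicity $\mu_i$ at $p_i$, and that the pencil $L$ appearing in Lemma~\ref{L:inequality} genuinely has this same multiplicity sequence along $p_0,\dots,p_r$ — both already built into the definitions and the local reduction, so no new ingredient is needed.
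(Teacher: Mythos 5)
Your argument is correct and follows essentially the same route as the paper's: feed Lemma~\ref{L:germ_mult} into Lemma~\ref{L:inequality} to get the strict inequality $\mu_r\bigl(\sqrt{F}_{(r)}\bigr)>\mu_r\bigl(\sqrt{T(F)}\bigr)$, read off from the inductive formula for the virtual transform that the coefficient of $E_r$ is positive, and then compute the degree on $E_r$ as the intersection number of a general member with $E_r$. You have simply made explicit two small points the paper leaves implicit — the positivity of the coefficient of $E_r$ in the virtual transform, and the use of the no-further-base-points hypothesis to see that the restriction to $E_r$ is base-point-free of the claimed degree — which is sound bookkeeping rather than a new idea.
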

\begin{proof} 
 Since the virtual multiplicity of $\sqrt{F}$ at $p_{r}$ is strictly greater than the multiplicity of the linear system by 
 Lemmas \ref{L:germ_mult} and \ref{L:inequality}, the first part follows.  
 The multiplicity of $\sqrt{T(F)}_{(r)}$ at $p_{r}$ is precisely the number of intersection points between
 the general member of $\sqrt{T(F)}_{(r+1)}$  and the exceptional curve $E_{r}$, so the second part also follows.
 \end{proof}

\begin{proof}[Proof of Theorem $\ref{Th:singularities}$]
The toric polar linear system $T(F)$ is equivalent to $\sqrt{T(F)}$
and $\sqrt{F}=0$ belongs to the later system. Assume that $p$ is a
singular point of $\sqrt{F}=0$ outside the coordinate lines. The point
$p$ is then a base point of $\sqrt{T(F)}$. The set of infinitely near
base points of $\sqrt{T(F)}$ at $p$ is finite, so it has at least one
point $p_{r}$ without further infinitely near base points. 
 By Corollary~\ref{C:leaves}, the exceptional curve $E_{r}$ on
 $S_{r+1}$ of this point is a component of the virtual transform
 $\sqrt{F}_{(r+1)}$ on $S_{r+1}$.  
 Since the linear system $\sqrt{T(F)}$ defines a birational map, the
 restriction of its base point free lift $\sqrt{T(F)}_{(r+1)}$ to
 $E_{r}$ must have degree $0$ or $1$.  But this degree is $\mu_{r}>0$,
 so $\mu_{r}=1$ and $E_{r}$ must be mapped isomorphically to a line. 
 Therefore all other components must be contracted and there can be no further multiple
 points of $\sqrt{F}$ outside the coordinate lines.   
 At an ordinary multiple point $p$ of $\sqrt{F}$ the multiplicity of
 $\sqrt{T(F)}$ is one less than the multiplicity of $\sqrt{F}$, so the
 last part follows. 
 \end{proof} 

\providecommand{\bysame}{\leavevmode\hbox to3em{\hrulefill}\thinspace}
\providecommand{\MR}{\relax\ifhmode\unskip\space\fi MR }
\providecommand{\MRhref}[2]{%
  \href{http://www.ams.org/mathscinet-getitem?mr=#1}{#2}
}
\providecommand{\href}[2]{#2}

\end{document}